\newcommand{\pai}{\left(}
\newcommand{\pad}{\right)}
\newtheorem{teo}{Theorem}[section]
\newtheorem{thm}[teo]{Theorem}
\newtheorem{cor}[teo]{Corollary}
\newtheorem{lemma}[teo]{Lemma}
\newtheorem{prop}[teo]{Proposition}
\theoremstyle{definition}
\newtheorem{defi}[teo]{Definition}
\newtheorem{rem}[teo]{Remark}
\newtheorem{nota}[teo]{Notation}
\newtheorem{notrem}[teo]{Notation and Remark}
\newtheorem{exm}[teo]{Example}
\newcommand{\cc}{\mathbb{C}}
\newcommand{\nn}{\mathbb{N}}
\newcommand{\rr}{\mathbb{R}}
\renewcommand{\AA}{\mathcal{A}}
\newcommand{\CC}{\mathcal{C}}
\newcommand{\GG}{\mathcal{G}}
\newcommand{\NN}{\mathcal{N}}
\newcommand{\OO}{\mathcal{O}}
\newcommand{\PP}{\mathcal{P}}
\newcommand{\XX}{\mathcal{X}}
\newcommand{\YY}{\mathcal{Y}}
\newcommand{\lan}{\left\langle}
\newcommand{\ran}{\right\rangle}
\newcommand{\NCac}{\XX }
\newcommand{\NCacd}{\YY }
\newcommand{\NCpp}{\NN\CC^{par} }
\newcommand{\NCeven}{\NN\CC^{even} }
\newcounter{PartitionDepth}
\newcounter{PartitionLength}
\newcommand{\upparti}[2]{
 \begin{picture}(#2,#1)
 \setcounter{PartitionDepth}{#1}
 \put(#2,0){\line(0, 1){1}}
 \end{picture}}
\newcommand{\uppartii}[3]{
 \begin{picture}(#3,#1)
 \setcounter{PartitionLength}{#3-#2}
 \setcounter{PartitionDepth}{#1}
 \put(#2,0){\line(0,1){#1}}     
 \put(#3,0){\line(0,1){#1}}
 \put(#2,\thePartitionDepth){\line(1,0){\thePartitionLength}}
 \end{picture}}
\newcommand{\uppartiii}[4]{
 \begin{picture}(#4,#1)
 \setcounter{PartitionLength}{#4-#2}
 \setcounter{PartitionDepth}{#1}
 \put(#2,0){\line(0,1){#1}}
 \put(#3,0){\line(0,1){#1}}
 \put(#4,0){\line(0,1){#1}}
 \put(#2,\thePartitionDepth){\line(1,0){\thePartitionLength}} 
 \end{picture}}
\DeclareRobustCommand{\SkipTocEntry}[5]{}
\title{On the anti-commutator of two free random variables}
\author{Daniel Perales}
\address{Daniel Perales: Department of Pure Mathematics, University of Waterloo, Ontario, Canada.}
\email{dperales@uwaterloo.ca}
\thanks{Supported by CONACyT (Mexico) via the scholarship 714236}
\begin{document}

\begin{abstract}
    Let $(\kappa_n(a))_{n\geq 1}$ denote the sequence of free cumulants of a random variable $a$ in a non-commutative probability space $(\mathcal{A},\varphi)$. Based on some considerations on bipartite graphs, we provide a formula to compute the cumulants $(\kappa_n(ab+ba))_{n\geq 1}$ in terms of $(\kappa_n(a))_{n\geq 1}$ and $(\kappa_n(b))_{n\geq 1}$, where $a$ and $b$ are freely independent. Our formula expresses the $n$-th free cumulant of $ab+ba$ as a sum indexed by partitions in the set $\mathcal{Y}_{2n}$ of non-crossing partitions of the form
  \[
  \sigma=\{B_1,B_3,\dots, B_{2n-1},E_1,\dots,E_r\}, \quad \text{with }r\geq 0,
  \]
  such that $i\in B_{i}$ for $i=1,3,\dots,2n-1$ and $|E_j|$ even for $j\leq r$. Therefore, by studying the sets $\mathcal{Y}_{2n}$ we obtain new results regarding the distribution of $ab+ba$. For instance, the size $|\mathcal{Y}_{2n}|$ is closely related to the case when $a,b$ are free Poisson random variables of parameter 1. Our formula can also be expressed in terms of cacti graphs. This graph theoretic approach suggests a natural generalization that allows us to study quadratic forms in $k$ free random variables.
\end{abstract}

\maketitle
\tableofcontents

\section{Introduction}

\subsection{The problem of the anti-commutator}

$\ $

\noindent Free cumulants, $(\kappa_n)_{n\geq 1}$, were introduced by Speicher \cite{Spe} as a combinatorial tool to study the addition of free random variables. Free cumulants can also be used to study the multiplication of free random variables (see \cite{nica1996multiplication}), via the following formula
\begin{equation}
\label{eq.cumulant.product}
    \kappa_n(ab) = \sum_{\tau\in \NN\CC(n) } \Bigg( \prod_{V\in \tau} \kappa_{|V|}(a)  \prod_{W\in Kr(\tau)} \kappa_{|W|}(b) \Bigg),
\end{equation}
where $\NN\CC(n)$ is the set of non-crossing partitions of $[n]:=\{1,\dots,n\}$ and $Kr(\tau)$ is the Kreweras complement of $\tau$ (see Definition \ref{defi.Kreweras} below).

It is then natural that one studies some other polynomials in two free random variables $a$ and $b$. Due to the non-commutative nature of $a$ and $b$, it makes sense that the next in line should be the commutator $i(ab-ba)$ and the anti-commutator $ab+ba$. It turns out that the free cumulants are also useful to study these cases. The combinatorial study of $i(ab-ba)$ was done by Nica and Speicher \cite{nica1998commutators}. The main observation is that the minus sign leads to the massive cancellation of terms in the expansions of the free cumulants of $i(ab-ba)$, leading to a tractable formula.  On the other hand for $ab + ba$ there are no cancellations to be followed, and the combinatorial approach becomes much more involved. The only exception is when $a$ and $b$ have symmetric distributions, that is, all their odd moments (and therefore all their odd cumulants) are zero. In this special case, $ab + ba$ and $i(ab-ba)$ have the same distribution and the common distribution can be computed using $R$-diagonal elements, see \cite{nica1998commutators} or Lecture 15 of \cite{NS}.

More recently, a formula to compute the Boolean cumulants of the free anti-commutator $ab+ba$ in terms of the individual Boolean cumulants of $a$ and $b$ was provided in \cite{fevrier2020using}. On the other hand, Ejsmont and Lehner studied quadratic forms of even variables \cite{ejsmont2020sums} (see also \cite{ejsmont2017sample}) and the limiting distribution that arises from sums of commutators and anti-commutators \cite{ejsmont2020free}. 
There also exist plainly analytic approaches to compute the anti-commutator. In \cite{vasilchuk2003asymptotic}, various systems of equations are given, which can in principle be used to calculate the Cauchy transform of $ab+ba$, while \cite{helton2018applications} provides an algorithm which can be used to obtain the distribution of a free anti-commutator, by using the linearization trick.

\subsection{A new formula for the anti-commutator}

$\ $

\noindent In this paper, we provide a general formula to compute the free cumulants of the anti-commutator $ab+ba$ in terms of the free cumulants of $a$ and $b$. The key idea is to carefully examine certain graphs associated to non-crossing partitions. These graphs have already appeared in \cite{mingo2012sharp}, see also Section 4.4 of \cite{mingo2017free}. In the present paper, the relevant feature we seek in these graphs is whether they are connected and bipartite.

\begin{defi}
\label{defi.Graph.pi}
\label{main.defi}
Let $\NN\CC(2n)$ be the set of non-crossing partitions of $[2n]:=\{1,2,\dots,2n\}$. 
\begin{enumerate}
\item[1.] Given a partition $\pi\in \NN\CC(2n)$, we construct the graph $\GG_\pi$ as follows:
\begin{itemize}
    \item The vertices of $\GG_\pi$ are going to be the blocks of $\pi$.
    \item For $k=1,2,\dots,n$ we draw an undirected edge between the block containing element $2k-1$ and the block containing element $2k$. We allow for loops and multiple edges, thus $\GG_\pi$ has exactly $n$ edges.
\end{itemize}

\item[2.] We denote
\[
\NCac_{2n}:=\{\pi\in \NN\CC(2n) : \GG_\pi\mbox{ is connected and bipartite}\}.
\]
A partition $\pi \in \NCac_{2n}$ has a natural \emph{bipartite decomposition} $\pi = \pi ' \sqcup \pi ''$.  Denoting by $V_1$ the block  of $\pi$ which contains the number $1$, we have that $\pi '$ consists of the blocks of $\pi$ which are at even distance from $V_1$ in the graph $\mathcal{G}_{\pi}$, while $\pi''$ consists of the blocks of $\pi$ which are at odd distance from $V_1$ in that graph.

\item[3.] For every $n\in\nn$, we denote by  $\NCacd_{2n}$ the set of non-crossing partitions of the form 
\[
\sigma=\{B_1,B_3,\dots, B_{2n-1},E_1,\dots,E_r\}, \qquad \text{with }r\geq 0,
\]
such that $i\in B_{i}$ for $i=1,3,\dots,2n-1$ and $|E_j|$ even for $j\leq r$.

In plain words, $\sigma\in \NCacd_{2n}$ if it separates odd elements and the blocks of $\sigma$ containing just even elements have even size. The notation is set to remind us that $B_i$ is the \emph{block containing $i$}, while $E_j$ is of \emph{even size} and also happens to contain only even elements, as $\{1,3,\dots,2n-1\}\subset B_1\cup\dots\cup B_{2n-1}$.
\end{enumerate}
\end{defi}

In Section \ref{sec:NCacd} we will show that the sets $\NCac_{2n}$ and $\NCacd_{2n}$ are actually related via the Kreweras complementation map:

\begin{prop}
\label{prop.NCacd.nice.description}
For every $n\in\nn$, we have that
\[
\NCac_{2n}=Kr(\NCacd_{2n}),
\]
where $Kr:\NN\CC(n)\to \NN\CC(n)$ is the Kreweras complementation map. 
\end{prop}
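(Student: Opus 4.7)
The plan is to analyse the Kreweras complement through its classical face-picture. Drawing $\sigma\in\NN\CC(2n)$ as an arc diagram in a disk (vertices $1,\dots,2n$ on the boundary, arcs joining consecutive elements of each block), the blocks of $Kr(\sigma)$ correspond bijectively to the faces of this diagram; under the standard identification $\bar{i}\leftrightarrow i$, the bar $\bar{i}$ lies in the face between boundary vertices $i$ and $i+1$. Consequently, the edge of $\GG_{Kr(\sigma)}$ labeled by $k\in[n]$ joins the face immediately to the left of vertex $2k$ (containing $\overline{2k-1}$) with the face immediately to its right (containing $\overline{2k}$). A local analysis at vertex $2k$ yields the first equivalence: $\GG_{Kr(\sigma)}$ has no loops iff no singleton block $\{2k\}$ appears in $\sigma$, since the two neighbouring faces of $2k$ coincide exactly when no arc of $\sigma$ has $2k$ as an endpoint.

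Next I would show that $\GG_{Kr(\sigma)}$ is connected iff every block of $\sigma$ contains at most one odd element. For necessity, two odd elements $2i-1,2j-1$ in a common block $V$ force the arcs of $V$ joining them, together with the boundary segment from $2i-1$ to $2j-1$, to enclose a proper sub-region of the disk; every $\GG$-edge crosses an even vertex and so remains entirely inside or entirely outside this region, giving a non-trivial splitting of the face set. For sufficiency, since the dual tree of $\sigma$'s arc diagram is connected, it suffices to connect, via $\GG_{Kr(\sigma)}$, the two faces on either side of each arc. For a block $V=\{v_1<\dots<v_s\}$, the $\GG$-edges contributed by the even elements of $V$ connect every ``inside-chain'' face $F_1,\dots,F_{s-1}$ (where $F_j$ lies just inside the arc $v_j$--$v_{j+1}$) to the face $F^u$ lying immediately outside $V$'s chain: extreme even $v_1,v_s$ produce edges $F^u$--$F_1$ and $F_{s-1}$--$F^u$, each middle even $v_j$ produces $F_{j-1}$--$F_j$, and the unique middle odd $v_i$ (if any) is bypassed by walking around the chain in both directions, each ending at $F^u$.

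Given connectedness, I would then show that $\GG_{Kr(\sigma)}$ is bipartite iff every even-only block of $\sigma$ has even size. The inside-chain construction applied to an even-only block $E_j=\{e_1<\dots<e_{s_j}\}$ traces a cycle $F^u-F_1-\dots-F_{s_j-1}-F^u$ of length $s_j$ using the edges labelled $e_1,\dots,e_{s_j}$. Cycles coming from distinct $E$-blocks are edge-disjoint, hence independent in the $\zz/2$ cycle space; and by the identity $|\sigma|+|Kr(\sigma)|=2n+1$, the cycle rank of the connected graph $\GG_{Kr(\sigma)}$ equals exactly the number $r$ of even-only blocks of $\sigma$, so these $r$ explicit cycles form a basis of the cycle space. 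Bipartiteness is then equivalent to each $|E_j|$ being even.

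Putting the three equivalences together, the defining conditions of $\NCacd_{2n}$ (odd elements pairwise separated and every even-only block of even size---the latter subsuming the no-even-singleton condition) translate exactly into ``$\GG_{Kr(\sigma)}$ connected and bipartite'', and conversely, yielding $Kr(\NCacd_{2n})=\NCac_{2n}$. I expect the main obstacle to be the sufficiency direction of the connectedness step: the ``route through $F^u$'' argument must be made precise in the presence of nested blocks, most naturally by an inductive peeling of innermost blocks, whereas the loop and cycle-space arguments are local and essentially automatic.
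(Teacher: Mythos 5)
Your proof is correct, but it takes a genuinely different route from the paper's, which stays entirely inside the lattice of non-crossing partitions. For connectedness the paper invokes Lemma \ref{Lemma.connected} ($\GG_\pi$ connected iff $\pi\lor I_{2n}=1_{2n}$) and then uses the fact that $Kr$ is a lattice anti-isomorphism together with the computation $Kr^{-1}(I_{2n})=\{\{1,3,\dots,2n-1\},\{2\},\{4\},\dots,\{2n\}\}$, so the condition becomes $\sigma\land Kr^{-1}(I_{2n})=0_{2n}$, i.e.\ $\sigma$ separates odd elements --- a two-line argument where you run a Jordan-curve separation plus a block-by-block dual-connectivity analysis. (Your sufficiency step, incidentally, needs no induction on nesting: the claim that the faces flanking the arcs of a single block $V$ all lie in one $\GG$-component is local to $V$, and connectivity of the planar dual of the arc diagram does the rest.) For bipartiteness the paper exhibits a direct two-way correspondence between simple odd cycles of $\GG_\pi$ and odd-sized even-only blocks of $\sigma$; you instead compute the cycle rank $n-|Kr(\sigma)|+1=|\sigma|-n=r$ and observe that the $r$ edge-disjoint cycles coming from the blocks $E_j$ are independent over $\zz/2$, hence a cycle basis, on which the (linear) parity functional detects bipartiteness. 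Your count is arguably cleaner --- it avoids having to classify the simple cycles of $\GG_\pi$, which the paper only does later in Proposition \ref{prop.cactus} --- and it anticipates the paper's own observation in Section \ref{sec:quadratic.forms} that $|\sigma|+|Kr(\sigma)|=2n+1$ is Euler's formula for $\GG_\pi$ with the $E_j$ as inner faces. The price is that everything rests on the face model of the Kreweras complement (blocks of $Kr(\sigma)$ are the faces of the arc diagram of $\sigma$, and the edge labelled $k$ joins the two faces flanking the even vertex $2k$), which you must set up precisely since the paper never introduces it. Two smaller points: your separate loop step is redundant, being the length-one case of your cycle-basis argument; and in the necessity direction of connectedness the closed curve you form (arc sub-path plus boundary segment) is not simple when $V$ has elements strictly between the two chosen odd ones, so the inside/outside dichotomy there needs a slightly more careful formulation.
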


\begin{exm}
\label{exm.NCacd}
\begin{enumerate}
    \item[1.] Here is an example of a partition $\sigma\in\NCacd_{12}$, on the left. In the middle we have $\pi=Kr(\sigma)\in \NCac_{12}$, and on the right we have the graph $\GG_\pi$.
\[
\begin{footnotesize}
\begin{picture}(14,5)
\put(1,.3) 1
\put(2,.3) 2
\put(3,.3) 3
\put(4,.3) 4
\put(5,.3) 5
\put(6,.3) 6
\put(7,.3) 7
\put(8,.3) 8
\put(9,.3) 9
\put(10,.3) {10}
\put(11,.3) {11}
\put(12,.3) {12}
\put(4.5,4.1) {$B_1$}
\put(4.5,3.1) {$B_7$}
\put(8.9,2.1) {$B_9$}
\put(10.6,2.1) {$B_{11}$}
\put(10.6,3.1) {$E_{1}$}
\put(3.5,2.1) {$B_3$}
\put(5,2.1) {$B_5$}
\put(-0.05,0.9){\uppartii{3}{1}{8}}
\put(-0.05,0.9){\uppartii{1}{3}{4}}
\put(-0.05,0.9){\uppartiii{2}{2}{6}{7}}
\put(-0.05,0.9){\upparti{1}{5}}
\put(-0.05,0.9){\upparti{1}{9}}
\put(-0.05,0.9){\upparti{1}{11}}
\put(-0.05,0.9){\uppartii{2}{10}{12}}
\end{picture}
\begin{picture}(14,4)
\put(1,.3) 1
\put(2,.3) 2
\put(3,.3) 3
\put(4,.3) 4
\put(5,.3) 5
\put(6,.3) 6
\put(7,.3) 7
\put(8,.3) 8
\put(9,.3) 9
\put(10,.3) {10}
\put(11,.3) {11}
\put(12,.3) {12}
\put(4,4.1) {$V_1$}
\put(10,3.1) {$V_5$}
\put(6,2.1) {$V_4$}
\put(10.5,2.1) {$V_6$}
\put(3.5,3.1) {$V_2$}
\put(3,2) {$V_3$}
\put(-0.05,0.9){\uppartii{3}{1}{7}}
\put(-0.05,0.9){\upparti{1}{3}}
\put(-0.05,0.9){\upparti{1}{6}}
\put(-0.05,0.9){\uppartiii{2}{2}{4}{5}}
\put(-0.05,0.9){\uppartiii{2}{8}{9}{12}}
\put(-0.05,0.9){\uppartii{1}{10}{11}}
\end{picture}
\begin{picture}(5,3)
 \put(.5,1.3) {$V_3$}
 \put(1,1) {\circle*{0.2}}
 \put(1.5,2.3) {$V_2$}
 \put(2,2) {\circle*{0.2}}
 \put(3.4,3.2) {$V_1$}
 \put(3.5,3) {\circle*{0.2}}
 \put(3.2,1.2) {$V_4$}
 \put(3,1) {\circle*{0.2}}
 \put(5,2.2) {$V_5$}
 \put(5,2) {\circle*{0.2}}
 \put(5.4,1) {$V_6$}
 \put(5,1) {\circle*{0.2}}
 \put(1,1){\line(1,1){1}}
 \put(2,2){\line(3,2){1.5}}
 \put(2,2){\line(1,-1){1}}
 \put(3.5,3){\line(3,-2){1.5}}
 \put(5,1.5){\oval(.7,1)}
\end{picture}
\end{footnotesize}
\]
The bipartite decomposition of $\GG_\pi$ has $\pi'=\{V_1,V_3,V_4,V_6\}$ and $\pi''=\{V_2,V_5\}$.

$\ $

\item[2.] In the table below we list the 5 partitions $\sigma$ of $\NCacd_4$ with their corresponding $\pi:=Kr(\sigma)\in \NCac_4$ and the graph $\GG_\pi$:

\setlength{\unitlength}{0.3cm}

\begin{center}
\begin{tabular}{|c|ccccc|}
\hline
\begin{tabular}{c}  $\sigma$ \\ \vspace{.5cm} \end{tabular} 
&
\begin{picture}(5,3.5)
\put(-0.05,0.6){\upparti{1}{1}}
\put(-0.05,0.6){\upparti{1}{3}}
\put(-0.05,0.6){\uppartii{2}{2}{4}}
\end{picture}
&
\begin{picture}(5,1)
\put(-0.05,0.6){\uppartii{1}{1}{2}}
\put(-0.05,0.6){\uppartii{1}{3}{4}}
\end{picture}
&
\begin{picture}(5,1)
\put(-0.05,0.6){\upparti{1}{3}}
\put(-0.05,0.6){\uppartiii{2}{1}{2}{4}}
\end{picture}
&
\begin{picture}(5,1)
\put(-0.05,0.6){\upparti{1}{1}}
\put(-0.05,0.6){\uppartiii{1}{2}{3}{4}}
\end{picture}
&
\begin{picture}(5,1)
\put(-0.05,0.6){\uppartii{2}{1}{4}}
\put(-0.05,0.6){\uppartii{1}{2}{3}}
\end{picture}
\\
\begin{tabular}{c}  $\pi$ \\ \vspace{.5cm} \end{tabular} 
&
\begin{picture}(5,3)
\put(-0.05,0.6){\uppartii{2}{1}{4}}
\put(-0.05,0.6){\uppartii{1}{2}{3}}
\end{picture}
&
\begin{picture}(5,1)
\put(-0.05,0.6){\upparti{1}{1}}
\put(-0.05,0.6){\upparti{1}{3}}
\put(-0.05,0.6){\uppartii{2}{2}{4}}
\end{picture}
&
\begin{picture}(5,1)
\put(-0.05,0.6){\upparti{1}{1}}
\put(-0.05,0.6){\upparti{1}{4}}
\put(-0.05,0.6){\uppartii{1}{2}{3}}
\end{picture}
&
\begin{picture}(5,1)
\put(-0.05,0.6){\upparti{1}{2}}
\put(-0.05,0.6){\upparti{1}{3}}
\put(-0.05,0.6){\uppartii{2}{1}{4}}
\end{picture}
&
\begin{picture}(5,1)
\put(-0.05,0.6){\upparti{1}{2}}
\put(-0.05,0.6){\upparti{1}{4}}
\put(-0.05,0.6){\uppartii{2}{1}{3}}
\end{picture}
\\
\begin{tabular}{c}  $\GG_\pi$ \\ \vspace{.05cm} \end{tabular} 
&
\begin{picture}(2,2)
 \put(1,1.7){\circle*{0.5}}
 \put(1,.3){\circle*{0.5}}
 \put(1,1){\oval(1.2,1.5)}
\end{picture}
&
\Forest{[[][]]}
&
\Forest{[[][]]}
&
\Forest{[[][]]}
&
\Forest{[[][]]} \\  \hline 
\end{tabular}
\end{center}
\vspace{.3cm}
\end{enumerate}
\end{exm}

\setlength{\unitlength}{0.45cm}

Our main contribution is a formula that expresses the cumulants of the anti-commutator as a sum indexed by $\NCac_{2n}$, or equivalently $\NCacd_{2n}$ (as the Kreweras map is a bijection).

\begin{thm}
\label{Thm.anticommutator.main}
Consider two free random variables $a$ and $b$, and let $(\kappa_n(a))_{n\geq 1}$, $(\kappa_n(b))_{n\geq 1}$ and $(\kappa_n(ab+ba))_{n\geq 1}$ be the sequence of free cumulants of $a$, $b$ and $ab+ba$, respectively. Then, for every $n\geq 1$ one has
\begin{equation}
\label{formula.anticommutator.main}
\kappa_n(ab+ba) = \sum_{\substack{\pi\in \NCac_{2n} \\ \pi=\pi'\sqcup\pi''}} \Bigg( \prod_{V\in \pi'} \kappa_{|V|}(a)  \prod_{W\in \pi''} \kappa_{|W|}(b) + \prod_{V\in \pi'} \kappa_{|V|}(b)  \prod_{W\in \pi''} \kappa_{|W|}(a) \Bigg).
\end{equation}
\end{thm}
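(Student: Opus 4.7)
My plan is to combine three standard tools from free probability: multilinearity of the free cumulants, the product formula of Krawczyk--Speicher (Theorem~14.4 of \cite{NS}), and the characterization of freeness by vanishing of mixed cumulants. First, by multilinearity of $\kappa_n$ in each argument,
\[
\kappa_n(ab+ba) \;=\; \sum_{\epsilon\in \{0,1\}^n}\kappa_n\bigl( c^{(\epsilon_1)},\ldots, c^{(\epsilon_n)}\bigr),
\]
where $c^{(0)}=ab$ and $c^{(1)}=ba$. For each $\epsilon$, I write $c^{(\epsilon_k)}=x_{2k-1}^{(\epsilon)}x_{2k}^{(\epsilon)}$, so that the two letters inside any single factor always have opposite type. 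The product formula then rewrites each inner cumulant as
\[
\kappa_n(c^{(\epsilon_1)},\ldots, c^{(\epsilon_n)}) \;=\; \sum_{\substack{\pi\in \NN\CC(2n) \\ \pi\vee \sigma_n=\hat 1_{2n}}}\kappa_\pi\bigl[x_1^{(\epsilon)},\ldots,x_{2n}^{(\epsilon)}\bigr],
\]
where $\sigma_n$ denotes the interval partition with blocks $\{2k-1,2k\}$. The join condition $\pi\vee\sigma_n=\hat 1_{2n}$ is precisely the statement that the graph $\GG_\pi$ from Definition~\ref{main.defi} is connected, since the edges of $\GG_\pi$ are exactly the blocks of $\sigma_n$ viewed as relations between blocks of $\pi$.

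Next, freeness of $a$ and $b$ forces $\kappa_\pi[x_1^{(\epsilon)},\ldots,x_{2n}^{(\epsilon)}]$ to vanish unless every block $V$ of $\pi$ is monochromatic (all $a$'s or all $b$'s), in which case the factor of $V$ equals $\kappa_{|V|}(a)$ or $\kappa_{|V|}(b)$ respectively. Because the two letters $x_{2k-1}^{(\epsilon)}, x_{2k}^{(\epsilon)}$ inside a single factor $c^{(\epsilon_k)}$ have opposite type, a monochromatic assignment of types to the blocks of $\pi$ is exactly a proper 2-coloring of the vertices of $\GG_\pi$: the two endpoints of every edge must receive different colors. Such a coloring exists if and only if $\GG_\pi$ is bipartite, and each valid coloring determines $\epsilon$ uniquely (loops in $\GG_\pi$ are automatically ruled out since they would require a block to be both $a$-type and $b$-type).

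Combining the two conditions, the nonzero terms in the double sum $\sum_\epsilon \sum_\pi$ are indexed by partitions $\pi$ whose graph $\GG_\pi$ is both connected and bipartite, that is, $\pi\in \NCac_{2n}$, together with a proper 2-coloring of $\GG_\pi$. A connected bipartite graph admits exactly two proper 2-colorings, corresponding to the two ways of labeling the sides of the bipartite decomposition $\pi=\pi'\sqcup \pi''$ of Definition~\ref{main.defi}: assigning $a$ to $\pi'$ and $b$ to $\pi''$ produces the first summand on the right-hand side of \eqref{formula.anticommutator.main}, while the opposite assignment produces the second, which yields the theorem.

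The main obstacle is the bookkeeping step that matches the algebraic constraints (the join condition $\pi\vee\sigma_n=\hat 1_{2n}$, vanishing under freeness, and the forced alternation of letters within each $c^{(\epsilon_k)}$) with the purely graph-theoretic conditions ``connected'' and ``bipartite'' built into $\NCac_{2n}$. Once that dictionary is pinned down, the counting of colorings and the identification of the two summands of \eqref{formula.anticommutator.main} are essentially automatic.
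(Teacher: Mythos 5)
Your proposal is correct and follows essentially the same route as the paper: multilinearity plus the products-as-arguments formula (Proposition \ref{prop.basic.anticomm}), the identification of the join condition with connectedness of $\GG_\pi$ (Lemma \ref{Lemma.connected}), and the observation that the surviving type-assignments are exactly the two proper 2-colorings of the connected bipartite graph $\GG_\pi$ (Proposition \ref{prop.few.nonvanishing.epsilons}). The dictionary you describe between monochromatic blocks, forced alternation within each factor, and bipartiteness is precisely the paper's argument.
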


Equation \eqref{formula.anticommutator.main} takes a simpler form when $a$ and $b$ have the same distribution, which means that $\varphi(a^n)=\varphi(b^n)$ for all $n\in\nn$, or equivalently that $\kappa_n(a)=\kappa_n(b)$ for all $n\in\nn$. This phenomenon was also observed in \cite{fevrier2020using}, for the formula in terms of Boolean cumulants.

\begin{cor}
\label{cor.anticommutator.id}
Consider two free random variables, $a$ and $b$, with the same distribution, and let us denote $\lambda_n:=\kappa_n(a)=\kappa_n(b)$. Then, one has
\begin{equation} 
\label{eq.anticommutator.id}
\kappa_n(ab+ba) =2\cdot \sum_{\pi\in \NCac_{2n}} \prod_{V\in \pi} \lambda_{|V|} \qquad \forall n\in\nn.
\end{equation}
\end{cor}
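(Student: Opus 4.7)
The plan is to deduce this corollary directly from Theorem \ref{Thm.anticommutator.main}, and the argument is essentially a one-line simplification; the point is really to observe that under the hypothesis the two summands in \eqref{formula.anticommutator.main} collapse into identical contributions that no longer depend on the bipartite decomposition.

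Concretely, I would start from formula \eqref{formula.anticommutator.main} and substitute $\kappa_n(a)=\kappa_n(b)=\lambda_n$ for every $n\in\nn$. For any fixed $\pi\in\NCac_{2n}$ with bipartite decomposition $\pi=\pi'\sqcup\pi''$, both products inside the parenthesis then become
\[
\prod_{V\in\pi'} \lambda_{|V|}\prod_{W\in\pi''}\lambda_{|W|}.
\]
In particular, the dependence on which half of the bipartition a given block belongs to disappears, so one no longer needs to distinguish $\pi'$ from $\pi''$.

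Next, since $\pi'\sqcup\pi''$ is a disjoint union whose union (as sets of blocks) equals $\pi$, I would combine the two products into a single one indexed by all blocks of $\pi$, giving
\[
\prod_{V\in\pi'}\lambda_{|V|}\prod_{W\in\pi''}\lambda_{|W|}=\prod_{V\in\pi}\lambda_{|V|}.
\]
Adding the two identical contributions in \eqref{formula.anticommutator.main} produces the factor of $2$, and summing over $\pi\in\NCac_{2n}$ yields exactly \eqref{eq.anticommutator.id}.

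There is no real obstacle here; the only point worth emphasizing is that the formula in Theorem \ref{Thm.anticommutator.main} is naturally written as a sum of two terms precisely because assigning $a$ to $\pi'$ (respectively to $\pi''$) produces two a priori different contributions, and the identically distributed hypothesis is exactly what makes those two contributions coincide. Thus the corollary can be stated in the cleaner form \eqref{eq.anticommutator.id} where the bipartite structure of $\pi$ is no longer visible.
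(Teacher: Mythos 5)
Your proof is correct and is exactly the simplification the paper intends (the paper states this corollary without a separate proof, as an immediate consequence of Theorem \ref{Thm.anticommutator.main}): substituting $\lambda_n$ for both cumulant sequences makes the two summands equal, and merging the products over $\pi'$ and $\pi''$ into one over $\pi=\pi'\sqcup\pi''$ yields the factor $2$ and formula \eqref{eq.anticommutator.id}. Nothing is missing.
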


\begin{cor}
\label{cor.anticomm.free.poisson.lambda}
If $a$ and $b$ are free random variables, both with free Poisson distribution of parameter $\lambda$, namely $\kappa_n(a)=\kappa_n(b)=\lambda$ for all $n \in \nn$, formula \eqref{eq.anticommutator.id} specializes to
\[
\kappa_n (ab+ba) = 2 \cdot \sum_{\pi \in \NCac_{2n}} \lambda^{| \pi |}
= 2 \cdot \sum_{\pi \in \NCacd_{2n}} \lambda^{ 2n+1 - | \pi |}  \qquad \forall n\in\nn. 
\]
\end{cor}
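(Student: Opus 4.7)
The proof is essentially a direct substitution, so my plan is short. The first equality follows from Corollary~\ref{cor.anticommutator.id} applied to the specific case $\lambda_n = \lambda$ for every $n$. Indeed, with this choice one has
\[
\prod_{V\in\pi} \lambda_{|V|} \;=\; \prod_{V\in\pi} \lambda \;=\; \lambda^{|\pi|},
\]
where $|\pi|$ denotes the number of blocks of $\pi$, so \eqref{eq.anticommutator.id} immediately yields the first claimed expression.

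For the second equality, the plan is to reindex the sum via the Kreweras complement. By Proposition~\ref{prop.NCacd.nice.description}, the map $Kr$ restricts to a bijection $\NCacd_{2n}\to\NCac_{2n}$, so I can substitute $\pi = Kr(\sigma)$ with $\sigma$ ranging over $\NCacd_{2n}$. The key ingredient is the standard identity
\[
|\sigma| + |Kr(\sigma)| \;=\; 2n+1 \qquad \text{for every }\sigma\in\NN\CC(2n),
\]
which gives $|\pi| = 2n+1-|\sigma|$. Substituting yields
\[
\sum_{\pi\in\NCac_{2n}} \lambda^{|\pi|} \;=\; \sum_{\sigma\in\NCacd_{2n}} \lambda^{2n+1-|\sigma|},
\]
and multiplying by $2$ gives the second expression in the statement.

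There is really no obstacle here: once Corollary~\ref{cor.anticommutator.id} and Proposition~\ref{prop.NCacd.nice.description} are in hand, the result is an immediate consequence of the basic cardinality identity for the Kreweras complement. The only thing worth being careful about is that the exponent $2n+1-|\sigma|$ is indeed nonnegative, but this is automatic since $|\sigma|\leq 2n$ for any partition of $[2n]$.
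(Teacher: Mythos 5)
Your proof is correct and follows the same route the paper intends: substitute $\lambda_n=\lambda$ into Corollary \ref{cor.anticommutator.id} to get $\lambda^{|\pi|}$, then reindex via the bijection $Kr:\NCacd_{2n}\to\NCac_{2n}$ from Proposition \ref{prop.NCacd.nice.description} using $|\sigma|+|Kr(\sigma)|=2n+1$. Nothing is missing.
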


In the special case where $a,b$ are both free Poisson of parameter 1, the cumulants are just given by twice the size of $\NCacd_{2n}$. In Theorem \ref{thm.recurrence.relation} below we will obtain a recursive formula for the sizes $|\NCacd_{2}|, |\NCacd_{4}|, |\NCacd_{6}|, \dots$ and this allows us to get the compositional inverse of the moment series of the anti-commutator.

\begin{thm}
\label{thm.inverse.moment.series}
Let $\nu$ be the distribution of the free anti-commutator of two free Poisson of parameter 1. The compositional inverse of the moment series of $\nu$ is given by
\begin{equation}
  M_\nu^{\langle-1\rangle}(z)=\frac{-7z-6 + 3 \sqrt{(z+2)(9z+2) }}{4(z+2)^2(z+1)}.
\end{equation}
\end{thm}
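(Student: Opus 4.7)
The plan is to combine the recursion for the sizes $|\NCacd_{2n}|$ provided by Theorem \ref{thm.recurrence.relation} with the standard moment--cumulant functional equation of free probability. By Corollary \ref{cor.anticomm.free.poisson.lambda} applied with $\lambda=1$, the free cumulants of $\nu$ satisfy $\kappa_n(\nu) = 2\,|\NCacd_{2n}|$ for every $n \geq 1$. Setting $Y(z) := \sum_{n\geq 1} |\NCacd_{2n}|\, z^n$, the free cumulant generating series of $\nu$ is therefore $C_\nu(z) = 2Y(z)$, and the problem reduces to inverting the moment series of $\nu$ in terms of $Y$.

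First I would translate the recurrence of Theorem \ref{thm.recurrence.relation} into an algebraic functional equation $P(z, Y(z)) = 0$, by multiplying the recursion by $z^n$ and summing over $n \geq 1$. Since the announced formula involves only a single square root, one should expect $P$ to be quadratic in $Y$.

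Next, I would invoke the moment--cumulant relation of free probability in the zero-at-origin convention $M_\nu(z) := \sum_{n\geq 1} m_n(\nu)\, z^n$, which takes the form
\[
M_\nu(z) = C_\nu\bigl( z\,(1 + M_\nu(z)) \bigr).
\]
Substituting $u = M_\nu(z)$ and $z = M_\nu^{\langle -1 \rangle}(u)$ rearranges to
\[
C_\nu^{\langle -1 \rangle}(u) = (1+u)\, M_\nu^{\langle -1 \rangle}(u),
\]
which already accounts for the factor $(z+1)$ in the denominator of the claimed formula.

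Finally, combining the two steps, I would set $w := C_\nu^{\langle -1 \rangle}(z) = (1+z)\,M_\nu^{\langle -1 \rangle}(z)$. From $C_\nu(w) = z$ we have $Y(w) = z/2$, and plugging this into $P(w, Y(w)) = 0$ yields a polynomial relation between $z$ and $M_\nu^{\langle -1 \rangle}(z)$, which is quadratic in the latter. Solving this quadratic and choosing the branch of the square root vanishing at $z = 0$ (verified by matching against the expansion $M_\nu^{\langle -1 \rangle}(z) = \tfrac{1}{2}z - \tfrac{7}{4}z^2 + \cdots$, computed from $|\NCacd_2| = 1$ and $|\NCacd_4| = 5$ via the moment--cumulant conversion) produces the stated closed form. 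The main obstacle will be the algebraic bookkeeping after isolating and squaring the square root; this is essentially a routine but tedious computation, best carried out with computer algebra.
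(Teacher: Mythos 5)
Your proposal follows essentially the same route as the paper: convert the recursion of Theorem \ref{thm.recurrence.relation} into an algebraic equation for the cumulant generating series, then use $M_\nu^{\langle-1\rangle}(z)=R_\nu^{\langle-1\rangle}(z)/(1+z)$ and solve a quadratic, choosing the branch that vanishes at $z=0$. One small correction to your degree count: the recursion is coupled with the odd-indexed sets $|\mathcal{Y}_{2n-1}|$, and after eliminating their generating series the resulting equation $P(z,Y(z))=0$ is quartic in $Y$ but quadratic in $z$ --- which is exactly what makes the inversion for $z$ (hence for $M_\nu^{\langle-1\rangle}$) possible with a single square root, as you correctly conclude.
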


\begin{rem}  
 To get a better grasp of Theorem \ref{Thm.anticommutator.main}, we now identify in Equation \eqref{formula.anticommutator.main} all those terms that also appear in the expansion of $\kappa_n (ab)$ from formula \eqref{eq.cumulant.product}. Let $\mathcal{NC}_2 (2n)$ be the set of all non-crossing pair-partitions of $[2n]$, that is, the set of partitions $\sigma = \{ V_1, \ldots , V_n \} \in \mathcal{NC} (2n)$ with $|V_1| = \cdots = |V_n| = 2$.  It is immediate that $\mathcal{NC}_2 (2n) \subseteq \mathcal{Y}_{2n}$; indeed, the non-crossing condition forces every pair of a $\sigma \in \mathcal{NC}_2 (2n)$ to contain one odd number and one even number, hence the pairs $V_1, \ldots , V_n$ of $\sigma$ can be renamed as $B_1, B_3,\ldots , B_{2n-1}$ with $i\in B_i$ for $i=1,3,\dots, 2n-1$.

As a consequence, it follows that
\[
\mathcal{X}_{2n} \supseteq \mathrm{Kr} ( \mathcal{NC}_2 (2n) )= \{ \lan \tau,Kr(\tau)\ran \mid \tau \in \mathcal{NC} (n) \},
\]

where for every $\tau \in \mathcal{NC} (n)$ we denote by $ \lan \tau,Kr(\tau)\ran $ the partition in $\mathcal{NC} (2n)$ obtained by placing $\tau$ on positions $\{ 1,3, \ldots , 2n-1 \}$ and its Kreweras complement on positions $\{ 2,4, \ldots , 2n \}$.  (See Section \ref{ssec:partitions} below for a precise discussion of this procedure.)

When we look at the summation over $\mathcal{X}_{2n}$ indicated on the right-hand side of Equation \eqref{formula.anticommutator.main}, and check the terms of that summation which are indexed by partitions $ \lan \tau,Kr(\tau)\ran$ with $\tau \in NC(n)$, we run into a double copy of the summation shown in \eqref{eq.cumulant.product}, giving the free cumulants $\kappa_n (ab)$ and $\kappa_n (ba)$.  
\end{rem}

\subsection{Rephrasing in terms of cacti graphs}

$\ $

\noindent 
Our main formula, \eqref{formula.anticommutator.main}, can be also phrased solely in terms of graphs. 

\begin{defi}
Let $\GG$ be a graph, where we allow loops and multiple edges.
\begin{itemize}
    \item[1.] A \emph{cycle} is a finite sequence of vertices and edges $(v_1,e_1,v_2,e_2,\dots,v_j,e_j)$ with vertices $v_1,\dots,v_j\in V_\GG$ and where $e_i\in E_\GG$ connects $v_i$ with $v_{i+1}$ for $i=1,\dots,j$ (assuming $j+1=1$). We say that the cycle is \emph{simple} if there is no proper subset $\{v_{i_1},\dots,v_{i_k}\}\subset \{v_1,\dots,v_j\}$ such that $v_{i_1},\dots,v_{i_k}$ are the vertices of another cycle. 
    \item[2.] A \emph{cactus} is a connected graph in which every edge $e\in E_\GG$ belongs to at most one simple cycle. We say that $e$ is \emph{rigid} if it belongs to a simple cycle, and we say that $e$ is \emph{flexible} if it does not belong to a simple cycle. The term cactus has already appeared in Traffic Freeness in connection with Free Probability, but a our notion is slightly different, see Remark \ref{rem.cactus.traffics} for details.
    \item[3.] An \emph{outercycle (or orientation)} of a cactus graph $\GG$ is a cycle $C$ that passes exactly once through every rigid edge and twice through every flexible edge. Every cactus graph has an outercycle, see  Remark \ref{rem.outercycle} below. An \emph{oriented cactus graph} is a pair $(\GG,C)$ where $\GG$ is a cactus graph and $C$ is an outercycle. We denote by $OCG_n$ the set of oriented cacti graphs with exactly $n$ edges.
\end{itemize}
Note: two orientations $C=(v_1,e_1,v_2,e_2,\dots,v_j,e_j)$ and $C'=(v'_1,e'_1,v'_2,e'_2,\dots,v'_j,e'_j)$ of a cactus graph $\GG$ are considered to be the same if there exists an automorphism $g$ of the graph $\GG$ such that $g(v_i)=v'_i$ and $g(e_i)=e'_i$ for $i=1,\dots,j$.
\end{defi}

Now let us see what kind of graph $\GG_\pi$ can be.

\begin{prop}
\label{prop.cactus}
Let $\pi\in \NN\CC(2n)$ be such that $\GG_\pi$ is connected. Then, $\GG_\pi$ is a cactus graph. 
\end{prop}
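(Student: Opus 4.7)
The plan is to show that $\mathcal{G}_\pi$ contains no \emph{theta subgraph}, i.e., no pair of vertices joined by three internally vertex-disjoint paths. By a standard ear-decomposition argument, a connected graph has two distinct simple cycles sharing an edge if and only if it contains a theta subgraph, so ruling out thetas is exactly equivalent to the cactus property.

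The key structural tool is a \emph{sector decomposition} around an arbitrary block $V\in\pi$. Listing the elements of $V$ in cyclic order as $v_1<v_2<\cdots<v_s$, the sectors of $V$ are the cyclic arcs $S_i=\{v_i+1,\ldots,v_{i+1}-1\}$. Because $\pi$ is non-crossing, every block other than $V$ lies entirely in a single sector. Moreover, any pair $\{2k-1,2k\}$ whose two elements both lie outside $V$ has those two (consecutive) elements in the same sector, since no integer lies strictly between $2k-1$ and $2k$. Consequently every edge of $\mathcal{G}_\pi$ not incident to $V$ stays inside a single sector, and two non-$V$ blocks in distinct sectors lie in distinct connected components of $\mathcal{G}_\pi\setminus V$.

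Now assume for contradiction that there is a theta subgraph in $\mathcal{G}_\pi$ with endpoints $V,W$ and three internally vertex-disjoint paths $P_1,P_2,P_3$. Let $S$ denote the sector of $V$ that contains $W$. The sector decomposition forces the non-$V$ portion of every $P_r$ to stay inside $S$; in particular, the three paths leave $V$ via three distinct edges of $\mathcal{G}_\pi$ (two paths sharing their first edge would share its other endpoint, contradicting internal vertex-disjointness), and each of those three edges goes to a vertex lying in $S$.

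The proof then closes with a direct count of how many edges $V$ can have into a sector $S=(v_i,v_{i+1})$. Such an edge comes from a pair $\{2k-1,2k\}$ with exactly one element in $V$ and one in $S$; since pair partners are consecutive integers, the $V$-element must be one of the two boundary elements $v_i$ or $v_{i+1}$. The pair through $v_i$ reaches $S$ only when $v_i$ is odd (partner $v_i+1\in S$), and the pair through $v_{i+1}$ reaches $S$ only when $v_{i+1}$ is even (partner $v_{i+1}-1\in S$). Hence $V$ has at most two edges into any one sector, contradicting the three distinct edges demanded by the theta. The main obstacle is setting up the sector decomposition cleanly, along with the small but crucial observation that two consecutive integers cannot be separated by an intermediate one; once these are established, the counting step is essentially immediate.
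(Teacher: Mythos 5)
Your proof is correct, and it takes a genuinely different route from the paper's. Both arguments begin from the same structural fact about non-crossing partitions: removing a block $V$ disconnects the remaining blocks according to which gap of $V$ they occupy (the paper only uses the coarser split into $\NN_V$, the blocks nested inside $V$, versus $\OO_V$, the blocks outside; your sector decomposition refines this to one part per gap, which is what makes your counting step possible). From there the paper classifies simple cycles directly: it shows that every simple cycle has a unique \emph{principal} vertex nesting all the others, that the cycle is then determined by its principal vertex together with any one of its edges, and hence that an edge lying on two simple cycles forces the cycles to coincide. You instead reduce the cactus property to the absence of a theta subgraph and rule thetas out by a degree count: an edge from $V$ into a sector $S=\{v_i+1,\dots,v_{i+1}-1\}$ must come from the pair through the boundary element $v_i$ (only if $v_i$ is odd) or through $v_{i+1}$ (only if $v_{i+1}$ is even), so $V$ sends at most two edges into any one sector, while a theta based at $V$ would require three distinct edges from $V$ into the sector containing the other branch vertex. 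Your closing count is shorter and more self-contained than the paper's cycle classification, at the price of invoking the standard equivalence between the cactus property and theta-freeness --- which merits one sentence of care here since $\GG_\pi$ may have loops and multiple edges, though neither causes real trouble. What the paper's longer route buys is structural information reused later: the existence and uniqueness of the principal vertex is precisely what shows that every simple cycle of the directed graph $\overrightarrow{\GG_\pi}$ is oriented, a fact needed in the proof of Proposition \ref{prop.preimage}.
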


In particular, the set $\{ \GG_\pi | \pi\in \NCac_{2n}\}$ consists of cacti graphs. We next want to study the size of $\{ \pi\in \NCac_{2n}: \GG_\pi=\GG\}$ for a given cactus graph $\GG$. This can be done effectively, once we have an outercycle.

\begin{notrem}
Consider a partition $\pi\in\NN\CC(2n)$ such that $\GG_\pi$ is connected. It is possible to construct a canonical outercycle $C_\pi$ of the cactus $\GG_\pi$. This outercycle starts in the block $V_1$ of $\pi$ that contains the number 1 and `goes around' $\GG_\pi$, it is concisely defined as follows. Let $S_\pi$ be the permutation of the symmetric group in $2n$ elements whose orbits are the blocks of $\pi$, where the elements of each block are run in increasing order. Namely, if $V=\{y_1<y_2<\dots<y_r\}$ is a block of $\pi$ then $S_\pi(y_i)=y_{i+1}$ for $i=1,\dots,r$ (considering $r+1=1$). Also, let $\tau:=(12)(34)\dots(2n-1, 2n)$ be the permutation that transposes pairs of consecutive elements. Then consider the orbit of 1 in $S_\pi\circ\tau$, and denote it as $x_1,x_2,\dots, x_j$ with $x_1=1$. If we let $v_i$ be the block containing the element $x_i$, and $e_i$ be the edge joining the blocks containing elements $x_i$ and $\tau(x_i)$, then the outercycle is given by $C_\pi=(v_1,e_1,v_2,e_2,\dots,v_j,e_j)$.
\end{notrem}

\begin{exm}
\label{exm.cactus}
We now explain how to obtain the canonical outercycle $C_\pi$ of the graph $\GG_\pi$ associated to the partition $\pi$ from the first part of Example \ref{exm.NCacd}: 
\begin{center}
\begin{scriptsize}
\begin{picture}(14,4)
\put(1,.2) 1
\put(2,.2) 2
\put(3,.2) 3
\put(4,.2) 4
\put(5,.2) 5
\put(6,.2) 6
\put(7,.2) 7
\put(8,.2) 8
\put(9,.2) 9
\put(10,.2) {10}
\put(11,.2) {11}
\put(12,.2) {12}
\put(4,4.1) {$V_1$}
\put(10,3.1) {$V_5$}
\put(6,2.1) {$V_4$}
\put(10.5,2.1) {$V_6$}
\put(3.5,3.1) {$V_2$}
\put(3,2) {$V_3$}
\put(-0.05,0.9){\uppartii{3}{1}{7}}
\put(-0.05,0.9){\upparti{1}{3}}
\put(-0.05,0.9){\upparti{1}{6}}
\put(-0.05,0.9){\uppartiii{2}{2}{4}{5}}
\put(-0.05,0.9){\uppartiii{2}{8}{9}{12}}
\put(-0.05,0.9){\uppartii{1}{10}{11}}
\thicklines
\put(1.3,.9){\line(1,0){.8}}
\put(3.3,.9){\line(1,0){.8}}
\put(5.3,.9){\line(1,0){.8}}
\put(7.3,.9){\line(1,0){.8}}
\put(9.3,.9){\line(1,0){.8}}
\put(11.3,.9){\line(1,0){.8}}
 \put(1.5,1.1) {$l_1$}
 \put(3.5,1.1) {$l_2$}
 \put(5.5,1.1) {$l_3$}
 \put(7.5,1.1) {$l_4$}
 \put(9.5,1.1) {$l_5$}
 \put(11.5,1.1) {$l_6$}
\end{picture}
\begin{picture}(5,5)
 \put(.7,.2) {$V_3$}
 \put(1,1) {\circle*{0.2}}
 \put(1.3,2.3) {$V_2$}
 \put(2,2) {\circle*{0.2}}
 \put(3.3,3.3) {$V_1$}
 \put(3.5,3) {\circle*{0.2}}
 \put(2.8,.2) {$V_4$}
 \put(3,1) {\circle*{0.2}}
 \put(5.1,2.2) {$V_5$}
 \put(5,2) {\circle*{0.2}}
 \put(4.8,.2) {$V_6$}
 \put(5,1) {\circle*{0.2}}
 \put(2.4,2.8) {$l_1$}
 \put(.8,1.5) {$l_2$}
 \put(2.1,1) {$l_3$}
 \put(4.2,2.7) {$l_4$}
 \put(4,1.3) {$l_5$}
 \put(5.5,1.2) {$l_6$}
 \put(1,1){\line(1,1){1}}
 \put(2,2){\line(3,2){1.5}}
 \put(2,2){\line(1,-1){1}}
 \put(3.5,3){\line(3,-2){1.5}}
 \put(5,1.5){\oval(.7,1)}
\end{picture}
\end{scriptsize}
\end{center}
To the left we have the partition $\pi$ with 6 extra segments $l_i$ that remind us of the pairs of consecutive elements $2i-1$ and $2i$ that will form the edges of the graph. Notice that $\GG_\pi$, which is depicted to the right, is indeed a cactus graph. It has four flexible edges $l_1,l_2,l_3,l_4$, and two rigid edges $l_5,l_6$ coming from a multiple edge that forms a simple cycle of size 2. For this partition $\pi$, we have that $S_\pi=(1,7)(2,4,5)(3)(6)(8,9,12)(10,11)$ and if we alternately apply $\tau$ and $S_\pi$ to 1 we get
\begin{equation}
\label{eq.orbit}
    1,2,\ 4,3,\ 3,4,\ 5,6,\ 6,5,\ 2,1,\ 7,8,\ 9,10,\ 11,12,\ 8,7,\ 1,2, \dots
\end{equation}
Therefore, the orbit of 1 in $S_\pi\circ\tau$ is $1,4,3,5, 6,2,7,9,11,8$, and the canonical outercycle is
\[
C_\pi=(V_1,l_1,V_2,l_2,V_3,l_2,V_2,l_3,V_4,l_3,V_2,l_1,V_4,l_4,V_5,l_5,V_6,l_6,V_5,l_4).
\]

There is a simple way to obtain $C_\pi$ from the drawing of $\pi$ together with the six segments. Imagine that the lines of the diagram represent the walls of a house seen from above, and that we are standing between 1 and 2, touching the wall $l_1$ from the south (from below) with the left hand. Then we start walking around the house always touching the walls with the left hand. If we record in order the blocks $V_i$ and lines $l_j$ that we pass when going around, we end up with $C_\pi$. Moreover, if we record the numbers we encounter while going around we get the sequence \eqref{eq.orbit}. The first 1 and 2 (or $l_1$) we encounter is our starting point (to the south of the wall). The second time we reach $l_1$, we are on the other side of the wall (to the north) of our starting point, the last time we reach 1 and 2 we are back in our starting point. The reason why we did not get $10$ and $12$ in the orbit, is because there is a `secret' chamber enclosed by $V_5$, $V_6$, $l_5$ and $l_6$. This chamber corresponds to the unique simple cycle of $\GG_\pi$. 
Notice that in the planar drawing of $\GG_\pi$ this amounts for going around the outer (unbounded) face in counterclockwise direction. So $C_\pi$ is indeed a cycle that surrounds $\GG_\pi$. Furthermore, notice $C_\pi$ passes once through every rigid edge and twice through every flexible edge. We can get other outercycles of $\GG_\pi$ by cyclically permuting the entries of $C_\pi$, but these are not considered the canonical cycle corresponding to $\pi$.
\end{exm}

We now present a concise formula for the size of the preimage of an oriented cactus graph.

\begin{prop}
\label{prop.preimage}
Let $(\GG,C)\in OCG_n$ with $C=(v_1,e_1,v_2,\dots,v_j,e_j)$ and denote by $f_C$ the number of flexible edges in $(\GG,C)$ distinct from $e_1$. Namely, if $e_1$ is rigid, then $f_C$ is simply the number of flexible edges in $\GG$ and if $e_1$ is flexible, $f_C$ is the number of flexible edges of $\GG$ minus 1. Then, 
\[
|\{\pi\in \NN\CC(2n): (\GG_\pi,C_\pi)=(\GG,C)\}|=2^{f_C}.
\]
In other words, there are exactly $2^{f_C}$ partitions $\pi\in \NN\CC(2n)$ such that $\GG_\pi=\GG$ and whose canonical cycle is $C$. 
\end{prop}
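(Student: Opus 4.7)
The plan is to establish a bijection between $\{\pi\in \NN\CC(2n) : (\GG_\pi,C_\pi) = (\GG,C)\}$ and $\{0,1\}^{f_C}$, with each coordinate recording the ``orientation'' of one flexible edge $e\neq e_1$.

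First I would argue that the labeling of the edges of $\GG$ by the pairs $(2k-1,2k)$ is uniquely determined by $C$. Because $x_1=1$, the edge $e_1$ is forced to be the pair $(1,2)$; then induction on the position $i$ along $C$, using the fact that $x_i = S_\pi(\tau(x_{i-1}))$ and that $\tau$ sends $2k-1\leftrightarrow 2k$, shows that the $k$-th distinct edge to appear as we traverse $C$ must be the pair $(2k-1,2k)$. Once this labeling is in place, specifying $\pi$ amounts to choosing, for each edge $e$, which of its two endpoints is the \emph{odd side} (contains $2k-1$) and which is the \emph{even side}. The orientation of $e_1$ is already fixed, since $v_1$ must contain $1$.

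I would then prove two complementary claims. Claim 1: for each flexible edge $e\neq e_1$, both orientations extend to a partition in $\NN\CC(2n)$ having canonical cycle $C$. Flipping the orientation of such an $e$ amounts to exchanging which element of the pair $(2k-1,2k)$ sits in each of the two adjacent blocks; this preserves block sizes and the cyclic order within each block (and therefore $S_\pi$ and $C_\pi$), and because $e$ is not part of any simple cycle the local swap cannot introduce a crossing in the arc diagram. Claim 2: for each rigid edge, only one orientation is compatible with the non-crossing condition. Rigid edges belong to a simple cycle $C_0$ of $\GG$, which corresponds to an ``inner orbit'' of $S_\pi\circ\tau$, and the requirement that this inner orbit close up as a single cycle of length $|C_0|$ (rather than merging into the outer orbit or splitting further) forces a unique coherent orientation on the edges of $C_0$ compatible with the outercycle $C$ and the non-crossing condition.

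I expect the hardest part to be Claim 2, a rigorous verification that the orientations within each simple cycle are completely determined by the outercycle together with non-crossingness. A clean way to organize the whole argument is by induction on $n$: at each step one can either strip off a flexible leaf edge $e\neq e_1$ (which reduces $f_C$ by $1$ and, by Claim 1, halves the preimage count), or contract an outermost simple cycle attached to the rest of the cactus at a single vertex (which leaves $f_C$ unchanged, matching Claim 2 since the removed edges are all rigid and contribute no freedom). The base case $n=1$ is immediate: a single loop or a single non-loop edge each give $f_C=0$ and a unique partition.
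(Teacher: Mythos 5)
Your overall strategy (parametrize the preimage by one binary choice per flexible edge other than $e_1$) is the same as the paper's, but the bookkeeping you propose fails at the first step, and this invalidates Claim 1. The labelling of the edges of $\GG$ by the pairs $(2k-1,2k)$ is \emph{not} determined by $C$ alone, and the $k$-th distinct edge encountered along $C$ need not be the pair $(2k-1,2k)$. Concretely, take $n=3$ and $\pi=\{\{1\},\{2,3\},\{4,5\},\{6\}\}$, $\pi'=\{\{1\},\{2,6\},\{3,5\},\{4\}\}$ in $\NN\CC(6)$. Both graphs are the path on four vertices with all three edges flexible, and both canonical outercycles are the depth-first traversal starting at the degree-one vertex containing $1$, so $(\GG_\pi,C_\pi)=(\GG_{\pi'},C_{\pi'})$ in $OCG_3$; yet the middle edge of the path carries the pair $(3,4)$ for $\pi$ and the pair $(5,6)$ for $\pi'$. (For $\pi'$ the orbit of $1$ under $S_{\pi'}\circ\tau$ is $1,6,3,4,5,2$, so the second distinct edge met along $C$ is the one joining the blocks of $5$ and $6$.) The same example refutes Claim 1: in $\pi$ the middle edge is flexible and distinct from $e_1$, but exchanging which of $3,4$ sits in which of the two adjacent blocks produces $\{\{1\},\{2,4\},\{3,5\},\{6\}\}$, which has a crossing. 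So, with the pair-labels held fixed, a flexible edge does not always admit both orientations, and some elements of the preimage (here $\pi'$) are reached only by changing the labels; your scheme happens to output the number $2^{f_C}$, but it counts some invalid configurations and misses some valid partitions.

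The repair, which is what the paper actually does, is to couple the two choices you tried to separate: for each flexible edge other than $e_1$ one chooses which of its \emph{two occurrences in the outercycle} $C$ is the forward one (rigid edges and $e_1$ have a forced forward occurrence, the former because every simple cycle of $\overrightarrow{\GG_\pi}$ is coherently oriented --- your Claim 2, which is correct and is established in the paper's proof of Proposition \ref{prop.cactus}); then the $n$ forward occurrences, read in the order they appear along $C$, are declared to be the pairs $(1,2),(3,4),\dots,(2n-1,2n)$, each directed from the block of the odd element to the block of the even element. This single choice determines the labelling and the orientation simultaneously, always yields a non-crossing partition, and gives the bijection with $\{0,1\}^{f_C}$. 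As written, your concluding induction also does not go through, since the step that strips a flexible leaf and halves the count relies on Claim 1.
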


The previous results allow us to rewrite Theorem \ref{Thm.anticommutator.main} just in terms of graphs. 
\begin{thm}
\label{thm.anticommutator.graphs}
Consider two free random variables $a$ and $b$, and let $(\kappa_n(a))_{n\geq 1}$, $(\kappa_n(b))_{n\geq 1}$ and $(\kappa_n(ab+ba))_{n\geq 1}$ be the sequence of free cumulants of $a$, $b$ and $ab+ba$, respectively. Then, for every $n\geq 1$ one has:
\begin{equation}
\label{eq.anticommutator.graphs}
\kappa_n(ab+ba) = \sum_{\substack{(\GG,C)\in OCG_{n}\\ \GG\text{ is bipartite}}} 2^{f_C} \Bigg( \prod_{v\in V'_\GG } \kappa_{d(v)}(a)  \prod_{w\in V''_\GG} \kappa_{d(w)}(b) +\prod_{v\in V'_\GG }\kappa_{d(v)}(b)  \prod_{w\in V''_\GG} \kappa_{d(w)}(a) \Bigg),
\end{equation}
where $f_C$ is the number of flexible edges in $(\GG,C)$ without counting the first edge of $C$; $(V'_\GG, V''_\GG)$ is the unique bipartition of the vertices $V_\GG$ such that $V'_\GG$ contains the first vertex of $C$; and $d(v)$ is the degree of the vertex $v$ in $\GG$.
\end{thm}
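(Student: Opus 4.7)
The plan is to reorganize the summation in Theorem \ref{Thm.anticommutator.main} according to the oriented cactus graph $(\GG_\pi, C_\pi)$ associated with each $\pi \in \NCac_{2n}$, and then invoke Proposition \ref{prop.preimage} to count the fibers.

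First I would observe that the map $\pi \mapsto (\GG_\pi, C_\pi)$ sends $\NCac_{2n}$ into the subset of $OCG_n$ consisting of pairs $(\GG, C)$ with $\GG$ bipartite. Indeed, by construction $\GG_\pi$ has exactly $n$ edges, it is connected and bipartite by the definition of $\NCac_{2n}$, and Proposition \ref{prop.cactus} guarantees it is a cactus. Conversely, if $(\GG, C) \in OCG_n$ with $\GG$ bipartite and connected, then every $\pi \in \NN\CC(2n)$ with $(\GG_\pi, C_\pi) = (\GG, C)$ automatically lies in $\NCac_{2n}$. Hence the summation over $\NCac_{2n}$ in \eqref{formula.anticommutator.main} can be split as a double sum: outer over bipartite $(\GG, C) \in OCG_n$, inner over partitions in the fiber of $(\GG, C)$.

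Second, I would show that the summand in \eqref{formula.anticommutator.main} depends only on $(\GG, C)$ and not on the individual $\pi$ in the fiber. Two identifications are needed. On the one hand, every block $V$ of $\pi$ corresponds to a vertex $v$ of $\GG_\pi$, and each element of $V$ contributes one edge-endpoint incident to $v$ (a loop at $v$ contributes two, as it should), so $|V| = d(v)$. On the other hand, the bipartite decomposition $\pi = \pi' \sqcup \pi''$ from Definition \ref{defi.Graph.pi} is, by construction, the bipartition of the vertex set of $\GG_\pi$ determined by parity of distance to the block $V_1$ containing $1$; since $V_1$ is precisely the starting vertex of the canonical outercycle $C_\pi$, it follows that $\pi'$ is identified with $V'_\GG$ and $\pi''$ with $V''_\GG$ in the notation of the theorem. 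Consequently,
\[
\prod_{V\in \pi'} \kappa_{|V|}(a)\prod_{W\in \pi''} \kappa_{|W|}(b)
=\prod_{v\in V'_\GG}\kappa_{d(v)}(a)\prod_{w\in V''_\GG}\kappa_{d(w)}(b),
\]
and similarly for the swapped term, yielding exactly the two products appearing inside the parenthesis of \eqref{eq.anticommutator.graphs}.

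Third, I would apply Proposition \ref{prop.preimage}: for fixed bipartite $(\GG, C) \in OCG_n$ there are exactly $2^{f_C}$ partitions $\pi \in \NN\CC(2n)$ (all automatically in $\NCac_{2n}$ as noted) in the fiber. Collecting the constant summand across the fiber gives the factor $2^{f_C}$, and summing yields
\[
\kappa_n(ab+ba)=\sum_{\substack{(\GG,C)\in OCG_n\\ \GG \text{ bipartite}}} 2^{f_C}\Bigg(\prod_{v\in V'_\GG}\kappa_{d(v)}(a)\prod_{w\in V''_\GG}\kappa_{d(w)}(b)+\prod_{v\in V'_\GG}\kappa_{d(v)}(b)\prod_{w\in V''_\GG}\kappa_{d(w)}(a)\Bigg),
\]
which is \eqref{eq.anticommutator.graphs}. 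The only subtle point, and in my view the main thing to pin down carefully, is the identification of $\pi'$ with $V'_\GG$ — this requires checking that ``even distance to $V_1$ in $\GG_\pi$'' agrees with ``even distance to the starting vertex of $C_\pi$,'' which is immediate once one recalls that $C_\pi$ begins at $V_1$. Everything else is a direct rewriting.
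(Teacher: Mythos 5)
Your proposal is correct and follows essentially the same route as the paper's own proof: start from Theorem \ref{Thm.anticommutator.main}, group the partitions $\pi\in\NCac_{2n}$ by their oriented cactus $(\GG_\pi,C_\pi)$, identify $|V|$ with $d(v)$ and $(\pi',\pi'')$ with $(V'_\GG,V''_\GG)$, and invoke Proposition \ref{prop.preimage} for the fiber count $2^{f_C}$. The point you flag as subtle (that $\pi'$ corresponds to the part of the bipartition containing the starting vertex of $C_\pi$, i.e.\ the block $V_1$) is handled the same way, if a bit more tersely, in the paper.
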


Again, if $a$ and $b$ have the same distribution the formula simplifies. 

\begin{cor}
\label{cor.anticommutator.graphs.id}
Consider two free random variables, $a$ and $b$, with the same distribution, and let us denote $\lambda_n:=\kappa_n(a)=\kappa_n(b)$. Then, one has
\begin{equation} 
\label{eq.anticommutator.graph.id}
\kappa_n(ab+ba) =\sum_{\substack{(\GG,C)\in OCG_{n}\\ \GG\text{ is bipartite}}} 2^{f_C+1} \prod_{v\in V_\GG } \lambda_{d(v)}, \qquad \forall n\in\nn.
\end{equation}
\end{cor}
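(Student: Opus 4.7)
The plan is to directly specialize Theorem \ref{thm.anticommutator.graphs}. Setting $\kappa_n(a)=\kappa_n(b)=\lambda_n$ for all $n\in\nn$, pick any oriented cactus $(\GG,C)\in OCG_n$ with $\GG$ bipartite and associated bipartition $V_\GG=V'_\GG\sqcup V''_\GG$. The two products appearing inside the parenthesis in Equation \eqref{eq.anticommutator.graphs} both collapse to the same expression,
\[
\prod_{v\in V'_\GG}\kappa_{d(v)}(a)\prod_{w\in V''_\GG}\kappa_{d(w)}(b)=\prod_{v\in V_\GG}\lambda_{d(v)}=\prod_{v\in V'_\GG}\kappa_{d(v)}(b)\prod_{w\in V''_\GG}\kappa_{d(w)}(a),
\]
so that the entire bracket reduces to $2\prod_{v\in V_\GG}\lambda_{d(v)}$. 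Absorbing this factor of $2$ into the coefficient $2^{f_C}$ promotes the exponent to $f_C+1$ and yields the identity \eqref{eq.anticommutator.graph.id}.

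Given that Theorem \ref{thm.anticommutator.graphs} is available, there is no genuine obstacle here: the entire argument is bookkeeping on the right-hand side. As a sanity check, one could alternatively bypass Theorem \ref{thm.anticommutator.graphs} and deduce the corollary directly from Corollary \ref{cor.anticommutator.id} by regrouping the sum over $\NCac_{2n}$ according to the canonical oriented cactus $(\GG_\pi,C_\pi)$. Proposition \ref{prop.cactus} guarantees that every $\GG_\pi$ arising is indeed a cactus, while Proposition \ref{prop.preimage} asserts that exactly $2^{f_C}$ partitions $\pi$ give rise to any prescribed pair $(\GG,C)$. Since the blocks of $\pi$ are in bijection with the vertices of $\GG_\pi$ and the size of a block equals the degree of the corresponding vertex, each such $\pi$ contributes the common weight $\prod_{V\in\pi}\lambda_{|V|}=\prod_{v\in V_\GG}\lambda_{d(v)}$. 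Combining this with the factor $2$ already present in \eqref{eq.anticommutator.id} reproduces the coefficient $2^{f_C+1}$ and again yields \eqref{eq.anticommutator.graph.id}. Either route is essentially mechanical, with the only subtlety being the careful verification that the canonical orientation $C_\pi$ behaves well with respect to the identification of blocks with vertices.
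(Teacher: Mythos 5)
Your proposal is correct and follows exactly the route the paper intends: the corollary is a direct specialization of Theorem \ref{thm.anticommutator.graphs}, where the two products in the bracket coincide when $\kappa_n(a)=\kappa_n(b)=\lambda_n$ and the resulting factor of $2$ is absorbed into $2^{f_C}$ to give $2^{f_C+1}$. The alternative derivation via Corollary \ref{cor.anticommutator.id} and Proposition \ref{prop.preimage} is a nice consistency check but is not needed.
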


Another application of Theorem \ref{thm.anticommutator.graphs} concerns the free anti-commutator with a semicircular element. 

\begin{prop}
\label{prop.anticom.semicircular.arbitrary}
Consider two free random variables $a$ and $s$, where $s$ is semicircular. This means that $\kappa_2(s)=1$ and $\kappa_n(s)=0$ for every $n\neq 2$, and let us denote $\lambda_n:=\kappa_n(a)$. Then, for odd $m\in\nn$ one has $\kappa_m(as+sa)=0$, while for even $m:=2n$, one has
\begin{equation}
\label{eq.anticomm.semicircular.arbitrary}
\kappa_{2n}(as+sa) = \sum_{\substack{(\GG,C)\in OCG_{n} }} 2^{g_{C}+1} \prod_{v\in V_{\GG} } \lambda_{d(v)},
\end{equation}
where if the first edge, $e_1$, of $C$ is flexible then $g_C:=2f_C+1$, while if the first edge of $C$ is rigid then $g_C:=2f_C$. 
\end{prop}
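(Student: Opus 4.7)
The plan is to specialize Theorem \ref{thm.anticommutator.graphs} to $b=s$ semicircular and then repackage the resulting sum via edge subdivision. Since $\kappa_j(s)=\delta_{j,2}$, the summand for a bipartite oriented cactus $(\GG,C)\in OCG_m$ has a nonzero first product only when $V''_\GG$ is $2$-regular and a nonzero second product only when $V'_\GG$ is. Because in a bipartite graph each edge contributes $1$ to the degree sum on each side, a $2$-regular side of size $k$ forces $m=2k$. So for odd $m$ the whole sum vanishes, giving $\kappa_m(as+sa)=0$.

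For $m=2n$ I would set up bijections $\Phi:OCG_n\to A$ and $\Psi:OCG_n\to B$ via edge subdivision, where $A:=\{(\GG,C)\in OCG_{2n}^{\mathrm{bip}}:V''_\GG\text{ is }2\text{-regular}\}$ and $B:=\{(\GG,C)\in OCG_{2n}^{\mathrm{bip}}:V'_\GG\text{ is }2\text{-regular}\}$. Given $(\GG^*,C^*)\in OCG_n$, insert a midpoint vertex in every edge of $\GG^*$ to obtain a bipartite graph $\GG$ in which the midpoints form a $2$-regular part and the original vertices retain the degrees they had in $\GG^*$. The map $\Phi$ lifts $C^*$ to the outercycle $C_1$ of $\GG$ starting at the first vertex of $C^*$, while $\Psi$ instead starts the outercycle $C_2$ at the midpoint of the first edge $e_1$. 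A direct verification shows that subdivision sends cacti to cacti (each simple cycle of length $k$ becomes a simple cycle of length $2k$), that an edge of $\GG$ is rigid iff the edge of $\GG^*$ it comes from is rigid, and that the first edges of $C_1$ and $C_2$ inherit the rigid/flexible type of $e_1$. A short casework on this type then yields the identity $f_{C_1}=f_{C_2}=g_{C^*}$.

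Feeding $\Phi$ and $\Psi$ into Theorem \ref{thm.anticommutator.graphs} and keeping only the nonvanishing terms produces
\[
\kappa_{2n}(as+sa)=\sum_{(\GG,C)\in A}2^{f_C}\prod_{v\in V'_\GG}\lambda_{d(v)}+\sum_{(\GG,C)\in B}2^{f_C}\prod_{w\in V''_\GG}\lambda_{d(w)},
\]
which under $\Phi^{-1}$ and $\Psi^{-1}$ collapses into $2\sum_{(\GG^*,C^*)\in OCG_n}2^{g_{C^*}}\prod_{v\in V_{\GG^*}}\lambda_{d(v)}$, exactly \eqref{eq.anticomm.semicircular.arbitrary}. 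The main obstacle is the degenerate case in which $\GG^*$ is a simple cycle: then the subdivided $\GG$ is itself a cycle, admitting an automorphism swapping its two bipartition classes, so $\Phi(\GG^*,C^*)=\Psi(\GG^*,C^*)$ and $A\cap B$ is nonempty. One has to check that the single orientation in $A\cap B$ still contributes $2^{f_C+1}\lambda_2^n$ to the total, because both terms of Theorem \ref{thm.anticommutator.graphs} become simultaneously nonzero and reproduce the pair of contributions that the generic case obtains from two distinct orientations of $\GG$.
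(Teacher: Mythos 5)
Your proposal is correct and follows essentially the same route as the paper: specialize Theorem \ref{thm.anticommutator.graphs}, use $\kappa_j(s)=\delta_{j,2}$ to force one side of the bipartition to be $2$-regular (whence the vanishing for odd $m$), and match each of the two surviving sums bijectively with $OCG_n$ while tracking $f_C=g_{C^*}$. The only cosmetic difference is that you build the bijections by subdividing edges of $\GG^*\in OCG_n$, whereas the paper goes in the inverse direction by contracting the $2$-regular side of $\GG$; your extra care with the second outercycle $\Psi$ and with the degenerate cycle case (which is harmless, since the two sums over $A$ and $B$ are handled separately) fills in details the paper only sketches.
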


Notice that Equation \eqref{eq.anticommutator.graph.id} is similar to Equation \eqref{eq.anticomm.semicircular.arbitrary}, except that the former is restricted to bipartite graphs and the exponents of 2 differ. Compare this to the analogue equations where we pick cumulants of products instead of anti-commutators. Namely, if $a,b,s$ are free random variables, with $a,b$ having the same distribution and $s$ being semicircular, then it follows from Equation \eqref{eq.cumulant.product} that $\kappa_{m}(as)=0$ for $m$ odd and $\kappa_{2n}(as)=\kappa_n(ab)$ for every $n\in\nn$. Thus, for the cumulants of products these two formulas coincide.

\subsection{Generalization to quadratic forms}

$\ $

\noindent 
The graph approach suggests a very natural generalization which is useful to study quadratic forms:
\[
a:= \sum_{1\leq i,j\leq k} w_{i,j} a_ia_j,
\]
where $a_1,\dots, a_k$ are free random variables, $w_{ij}\in \rr$ and we require $w_{j,i}=w_{i,j}$ for $1\leq i\leq j\leq k$, so that $a$ is self-adjoint. Even in this generality we just make use of cacti graphs, the difference is that instead of bipartite graphs we need to consider $k$-colored graphs.

\begin{nota}
We will denote by $OCG^{(k)}_n$, the set of $k$-colored oriented cacti graphs with $n$ edges. Namely, we take an oriented cactus $(\GG,C)\in OCG_n$ together with a coloring $\lambda:V_\GG\to[k]$ of the vertices of $\GG$. For $i=1,\dots,k$, we denote by $Q_i=:\lambda^{-1}(i)\subset V_\GG$ the subset of vertices that has color $i$. Given a sequence of weights $(w_{i,j})_{1\leq i,j\leq k}$, and a graph $\GG\in OCG^{(k)}_n$ we denote by
\[
w_\GG:=\prod_{(v,u)\in E_\GG} w_{\lambda(v),\lambda(u)}
\]
the product of the weights of each edge of the graph $\GG$.
\end{nota}

\begin{thm}
\label{thm.quadratic.forms}
Consider free random variables $a_1,\dots, a_k$ and let
\[
a:= \sum_{1\leq i,j\leq k} w_{i,j} a_ia_j,
\]
where $w_{i,j}\in \rr$ and $w_{j,i}=w_{i,j}$ for $1\leq i\leq j\leq k$. Then, the cumulants of $a$ are given by 
\begin{equation}
\label{eq.quadratic.forms}
\kappa_n(a) = \sum_{(\GG,C) \in OCG^{(k)}_{n}} 2^{f_C} w_{\GG} \prod_{i=1}^k \Big( \prod_{v_i\in Q_i } \kappa_{|v_i|}(a_i) \Big).
\end{equation}
\end{thm}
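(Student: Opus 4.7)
The plan is to reduce the general quadratic form to the graph-theoretic framework already developed for the anti-commutator. I would combine multilinearity of the free cumulants with the formula for cumulants whose entries are themselves products, then use freeness to restrict to monochromatic-block partitions, and finally apply Propositions \ref{prop.cactus} and \ref{prop.preimage} to repackage the resulting sum in terms of oriented cacti.

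First, writing $a=\sum_{i,j}w_{i,j}a_ia_j$ and using multilinearity of the free cumulants in each entry gives
\begin{equation*}
\kappa_n(a) = \sum_{i_1,j_1,\dots,i_n,j_n=1}^{k} \Bigl(\prod_{s=1}^n w_{i_s,j_s}\Bigr)\,\kappa_n(a_{i_1}a_{j_1},\dots,a_{i_n}a_{j_n}).
\end{equation*}
For each mixed cumulant on the right, the standard formula for cumulants of products, applied with the interval partition $\sigma_n=\{\{1,2\},\{3,4\},\dots,\{2n-1,2n\}\}\in \NN\CC(2n)$, yields
\begin{equation*}
\kappa_n(a_{i_1}a_{j_1},\dots,a_{i_n}a_{j_n}) = \sum_{\substack{\pi\in \NN\CC(2n)\\ \pi\vee \sigma_n=1_{2n}}}\kappa_\pi[a_{i_1},a_{j_1},\dots,a_{i_n},a_{j_n}],
\end{equation*}
where $\kappa_\pi[x_1,\dots,x_{2n}]:=\prod_{V\in\pi}\kappa_{|V|}(x_V)$ is the usual cumulant-over-partition functional. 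The constraint $\pi\vee\sigma_n=1_{2n}$ is precisely the condition that the graph $\GG_\pi$ of Definition \ref{defi.Graph.pi} be connected. By freeness of $a_1,\dots,a_k$, each block factor $\kappa_{|V|}(x_V)$ vanishes unless all elements of $V$ carry the same label $\ell_V\in[k]$; in that case the factor equals $\kappa_{|V|}(a_{\ell_V})$.

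Swapping the order of summation, I would first fix $\pi$ with $\GG_\pi$ connected and then sum over those label tuples compatible with $\pi$. Such tuples are in natural bijection with vertex-colorings $\lambda:V_{\GG_\pi}\to[k]$: one assigns to each block its common label. Under this bijection the weight product $\prod_s w_{i_s,j_s}$ becomes exactly $w_{\GG_\pi}$ read off with the coloring $\lambda$, so
\begin{equation*}
\kappa_n(a) = \sum_{\substack{\pi\in \NN\CC(2n)\\ \GG_\pi\text{ connected}}}\ \sum_{\lambda:V_{\GG_\pi}\to[k]} w_{\GG_\pi}\prod_{V\in\pi}\kappa_{|V|}(a_{\lambda(V)}).
\end{equation*}
Finally, Proposition \ref{prop.cactus} guarantees that each such $\GG_\pi$ is a cactus, while Proposition \ref{prop.preimage} says that for every $(\GG,C)\in OCG_n$ there are exactly $2^{f_C}$ partitions $\pi$ with $(\GG_\pi,C_\pi)=(\GG,C)$. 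Reindexing the partition sum by oriented cacti with this multiplicity, observing that $|V|=d(v)$ for the vertex $v$ corresponding to $V$, and bundling each coloring $\lambda$ together with $(\GG,C)$ into a single element of $OCG_n^{(k)}$, converts the above expression into the right-hand side of \eqref{eq.quadratic.forms}. The main technical step is the freeness-driven collapse to monochromatic blocks; everything afterward is the same $(\GG,C)$-bookkeeping already used to pass from Theorem \ref{Thm.anticommutator.main} to Theorem \ref{thm.anticommutator.graphs}, now with a $k$-coloring in place of the bipartition.
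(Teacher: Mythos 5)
Your proposal is correct and follows essentially the same route as the paper: multilinearity plus the products-as-arguments formula (the $k$-colored analogue of Proposition \ref{prop.basic.anticomm}), vanishing of mixed cumulants to force monochromatic blocks, the identification of surviving label tuples with vertex-colorings of $\GG_\pi$, and finally Propositions \ref{prop.cactus} and \ref{prop.preimage} to reindex by oriented colored cacti with multiplicity $2^{f_C}$. The only difference is cosmetic: the paper records the labels as a single tuple $\varepsilon\in[k]^{2n}$ with associated ordered partition $A(\varepsilon)$, whereas you keep the pairs $(i_s,j_s)$ explicit.
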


If we restrict our attention to the case where all the variables are even, then we can retrieve Proposition 4.5 of \cite{ejsmont2017sample} (see also \cite{ejsmont2020sums}). The details are given in Corollary \ref{cor.quadratic.forms.even}. \\

Besides this introductory section, this paper has five other sections. Section \ref{sec:prelim} reviews the basic notions on non-crossing partitions, free cumulants and free probability, needed later on in the paper. In Section \ref{sec:anticomm} we prove our main Theorem \ref{Thm.anticommutator.main} describing the cumulants of the anti-commutator. In Section \ref{sec:NCacd}, we prove Proposition \ref{prop.NCacd.nice.description} and study in more detail the set $\NCacd_{2n}$. These results are used for the applications in Section \ref{sec:other.applications}, which is divided in two parts. First we study the case of two free Poisson of parameter 1 and prove Theorem \ref{thm.inverse.moment.series}, then we use our method to retrieve the previously known formula in the case where distributions are assumed to be symmetric. Finally, in Section \ref{sec:quadratic.forms} we explore a graph theoretic approach and prove Theorem \ref{thm.anticommutator.graphs} and Theorem \ref{thm.quadratic.forms}.\\

Acknowledgements:  The author expresses his gratitude to Alexandru Nica for his valuable feedback and continuous guiding during the elaboration of this paper. 

\section{Preliminaries}
\label{sec:prelim}

\addtocontents{toc}{\SkipTocEntry}
\subsection{Partitions}
\label{ssec:partitions}

$\ $

\noindent 
We begin with a brief introduction to the definitions and notations used in this paper regarding partitions of a set. For detailed discussion of all of these concepts the standard reference is \cite{NS}.  A \textit{partition $\pi$ of a finite set $S$} is a set of the form $\pi=\{V_1,\dots,V_k\}$ where $V_1,\dots, V_k\subset S$ are pairwise disjoint non-empty subsets of $S$ such that $V_1\cup \dots \cup V_k=S$. The subsets $V_1,\dots, V_k$ are called \textit{blocks} of $\pi$, and we write $|\pi|$ for number of blocks (in this case $k$).  We denote by $\PP(S)$ the set of all partitions of $S$, and we further write $\PP(n)$ in the special case where $S=[n]:=\{1,\dots,n\}$. 

Given a partition $\pi\in\PP(n)$, we say that $V\in\pi$ is an \emph{interval block} if it is of the form $V=\{i,i+1,\dots,i+j\}$ for some integers $1\leq i<i+j \leq n$. We further say that $\pi\in\PP(n)$ is an \textit{interval partition} if all the blocks $V\in \pi$ are intervals.

\begin{defi} 
We say that $\pi\in\PP(n)$ is a \textit{non-crossing partition} if for every $1\leq i < j < k < l \leq n$ such that $i, k$ belong to the same block $V$ of $\pi$ and $j,l$ belong to the same block $W$ of $\pi$, then it necessarily follows that all $i,j,k,l$ are in the same block, namely $V=W$. We will denote by $\NN\CC(n)$ the set of all non-crossing partitions of $[n]$.
\end{defi}

Given a totally ordered set $S$, we can define the notion of interval and non-crossing partition of $S$, by taking the (unique) order preserving bijection from $S$ to $\{1,\dots,|S|\}$, and using the given definition for $n=|S|$. Through this note we will just use the special case where $S\subset [n]$ and the total order for $S$ is inherited from the total order of $[n]$. We will denote by $\NN\CC(S)$ the set of all non-crossing partitions of $S$.

\begin{nota}[Restriction]
\label{nota.restriction}
Given a partition $\pi\in \PP(n)$ and $S\subset[n]$ we denote by $\pi|S$ the partition of $\PP(|S|)$ obtained by first considering the partition in $\PP(S)$ with blocks $\{V_1\cap S,V_2\cap S,\dots, V_k\cap S\}$ and then re-indexing the elements of $S$ with $1,2,\dots, |S|$ to get a partition of $\PP(|S|)$. Notice that restrictions of non-crossing (resp. interval) partitions are again non-crossing (resp. interval) partitions.
\end{nota}

We can equip $\PP(n)$ with a lattice structure using the reverse refinement order $\leq$. For $\pi,\sigma\in \PP(n)$, we write ``$\pi \leq \sigma$'' if every block of $\sigma$ is a union of blocks of $\pi$. The maximal element of $\PP(n)$ with this order is $1_n:=\{\{1,\dots,n\}\}$ (the partition of $[n]$ with only one block), and the minimal element is $0_n:=\{\{1\},\{2\},\dots,\{n\}\}$ (the partition of $[n]$ with $n$ blocks). With this order, $(\NN\CC(n),\leq)$ is a sub-poset of $(\PP(n),\leq)$.

\begin{defi}
\label{defi.joins}
Given two partitions $\pi,\sigma\in \PP(n)$, the \emph{join of $\pi$ and $\sigma$ in the lattice of all partitions}, denoted as $\pi\lor_\PP \sigma$, is defined as the smallest partition that is bigger than $\pi$ and $\sigma$ in the inverse refinement order. 

Given two partitions $\pi,\sigma\in \NN\CC(n)$, the \emph{join of $\pi$ and $\sigma$ in the lattice of non-crossing partitions}, denoted simply as $\pi\lor \sigma$, is defined as the smallest non-crossing partition that is bigger than $\pi$ and $\sigma$ in the reverse refinement order. 
\end{defi}

Notice that definition of the join depends on which lattice we are working with. We are only concerned with the lattice $\NN\CC(n)$, but we will work with special a case where both joins coincide, and in this case we can make use of a explicit description of $\pi\lor_\PP\sigma$.

\begin{rem}[Some properties of the joins]
\label{rem.equivalence.join}
\label{rem.lor.tildelor}
If one of the partitions $\pi$ or $\sigma$ is an interval partition, then the two joins coincide $\pi\lor_\PP \sigma =\pi\lor \sigma$ (see Exercise 9.43 of \cite{NS}). Moreover, for two partitions $\pi,\sigma\in\PP(n)$, it is known that $\pi\lor_\PP \sigma =1_{n}$ if and only if for every two elements $i,j\in [n]$ we can find elements $i=i_0,i_1,i_2,\dots, i_{2k+1}=j$ such that for $l=0,\dots,k$, the pair $i_{2l},i_{2l+1}$ is in the same block of $\sigma$, and the pair $i_{2l+1},i_{2l+2}$ is in the same block of $\pi$. A detailed discussion on these and other properties of the join can be found in Lecture 9 of \cite{NS}. In this paper, we are particularly interested in partitions $\pi$ such that $\pi\lor I_{2n}=1$ for $I_{2n}:=\{\{1,2\},\{3,4\},\dots,\{2n-1,2n\}\}$. Since $I_{2n}$ is an interval partition, this means that $\pi\lor \sigma=\pi\lor_\PP I_{2n}$ and we have the nice description presented above.
\end{rem}

Throughout this paper we are going to encounter some well studied subsets of $\NN\CC(n)$. We will exploit the fact that some of these subsets are well behaved when applying the Kreweras complement map. The rest of this subsection is devoted to present and fix the notation for all these subsets and discuss the Kreweras complement map.

\begin{defi}
Given a partition $\pi\in\PP(2n)$ we say that 
\begin{itemize}
    \item $\pi$ is \emph{parity preserving} if every block $V\in\pi$ is contained either in $\{1,3,\dots,2n-1\}$ or in $\{2,4,\dots,2n\}$. We denote the set of parity preserving non-crossing partitions of $[2n]$ by $\NCpp(2n)$.
    \item $\pi$ is \emph{even} if every block $V\in\pi$ has even size. We denote the set of non-crossing even partitions of $[2n]$ by $\NCeven(2n)$.
    \item $\pi$ is \emph{a pairing (or a pair partition)} if every block $V\in\pi$ size $|V|=2$. We denote the set of non-crossing pair partitions of $[2n]$ by $\NN\CC_{2}(2n)$.
\end{itemize}
Notice that the definitions are in general for every partition (not necessarily non-crossing), but we give the notation of the set just for the non-crossing partitions as we are only concerned with these partitions.
\end{defi}

\begin{nota}
\label{nota.odd.even.partition}
Observe that given two partitions $\sigma_1,\sigma_2\in\NN\CC(n)$, there is a unique partition $\sigma\in\PP(2n)$ such that $\sigma$ is parity preserving,  $\sigma|\{1,3,\dots,2n-1\}=\sigma_1$ and $\sigma|\{2,4,\dots,2n\}=\sigma_2$, we denote this partition $\sigma$ by $\lan\sigma_1,\sigma_2\ran$.
\end{nota}

Notice that $\sigma_1,\sigma_2$ being non-crossing is a necessary but not sufficient condition for $\lan\sigma_1,\sigma_2\ran$ to be non-crossing. The Kreweras complement of a fixed partition $\sigma_1$ is actually the biggest partition $\sigma_2\in\NN\CC(n)$ such that $\lan\sigma_1,\sigma_2\ran$ is non-crossing.

\begin{defi}[Kreweras complement]
\label{defi.Kreweras}
Given $\pi\in\NN\CC(n)$, its \emph{Kreweras complement}, $Kr_n(\pi)\in\NN\CC(n)$, is defined as the biggest partition of $\NN\CC(n)$ such that $\lan \pi,Kr_n(\pi)\ran\in \NN\CC(2n)$. We will omit the sub-index $n$ in $Kr_n$ whenever it is clear from the context.
\end{defi}

\begin{rem}
\label{rem.properties.Kreweras}
Since $\lan \pi,0_n\ran$ is always non-crossing, then the set of non-crossing partitions such that $\lan \pi,Kr_n(\pi)\ran\in \NN\CC(2n)$ is not empty. Since $\NN\CC(n)$ is a lattice, there exists a unique maximum, which means that the Kreweras complement is well-defined. Moreover, the map $Kr:\NN\CC(n)\to\NN\CC(n)$ is actually a lattice anti-isomorphism of $\NN\CC(n)$. In particular, for every $\pi,\sigma\in\NN\CC(n)$ we have that $Kr(\pi\lor\sigma)=Kr(\pi)\land Kr(\sigma)$ and $Kr(\pi\land\sigma)=Kr(\pi)\lor Kr(\sigma)$. Another important property of this map is that $|Kr(\pi)|=n+1-|\pi|$ for all $\pi\in\NN\CC(n)$, thus we actually have that $Kr(1_n)=0_n$ and $Kr(0_n)=1_n$.

We are also going to use the inverse of the Kreweras complement $Kr^{-1}$, which for $\sigma\in\NN\CC(n)$ can be explicitly defined as the biggest partition $Kr^{-1}(\sigma)\in\NN\CC(n)$ such that $\lan Kr^{-1}(\sigma),\sigma\ran\in \NN\CC(2n)$. 
\end{rem}

\begin{rem}
\label{rem.even.parity.preserving}
\label{rem.Kreweras.pair.partitions}
An interesting fact is that the Kreweras complement of an even partitions is a parity preserving partition, and vice-versa. Namely we have that $Kr_{2n}(\NCeven(2n))=\NCpp(2n)$ and $Kr_{2n}(\NCpp(2n))=\NCeven(2n)$ (see Exercise 9.42 of \cite{NS}.)

Notice that $\NN\CC_2(2n)\subset \NCeven(2n)$, and this implies that 
\[
Kr_{2n}(\NN\CC_2(2n))\subset Kr_{2n}(\NCeven(2n))=\NCpp(2n).
\]
Moreover, the pair partitions are in some sense the minimal even partitions, and thus when taking the Kreweras complement we get the maximal parity preserving partitions. Specifically, the image of the set of pair partitions under the Kreweras complement map can be described as
\[
Kr_{2n}(\NN\CC_2(2n))=\{\lan \pi,Kr_{n}(\pi)\ran\in\NN\CC(2n): \pi\in \NN\CC(n)\}.
\]
Vice-versa, the Kreweras complement of partitions of this type gives again a pair partition.
\end{rem}

\addtocontents{toc}{\SkipTocEntry}
\subsection{Free cumulants}
\label{ssec:cumulants}

$\ $

\noindent 
A \textit{non-commutative probability space} is a pair $(\AA,\varphi)$, where $\AA$ is a unital algebra over $\mathbb{C}$ and $\varphi: \AA \to \cc$ is a linear functional, such that $\varphi(1_\AA) =1$. The \textit{$n$-th multivariate moment} is the multilinear functional $\varphi_n: \AA^n \to \mathbb{C}$, such that $\varphi_n(a_1, \ldots, a_n):=\varphi(a_1\cdots a_n) \in \mathbb{C}$, for elements $a_1, \ldots, a_n \in \AA$. In this framework, Voiculescu's original definition of freeness for random variables explains how to compute the mixed moments in terms the moments of each variable (see \cite{voiculescu1992free}). Since this definition is out of the scope of this paper, we will restrict our presentation to the characterization of freeness using free cumulants.

\begin{nota}\label{notat:multiplicative}
Given a family of multilinear functionals $\{f_m :\AA^{m}\to \mathbb{C}\}_{m\geq1}$ and a partition $\pi\in\PP(n)$, we define $f_\pi :\AA^{m}\to \mathbb{C}$ to be the map
\[
f_\pi(a_1, \ldots, a_n):=\prod_{V \in \pi}f_{|V|}(a_V), \qquad \forall a_1,\dots,a_n\in\AA,
\] 
where for every block $V:=\{{v_1}, \cdots, {v_k}\}$ of $\pi$ (such that ${v_1} < \cdots < {v_k}$ are in natural order) we use the notation $f_{|V|}(a_V):=f_{k}(a_{v_1},\ldots,a_{v_k})$.
\end{nota}

With this notation in hand we can define the free cumulants.

\begin{defi}[Free cumulants]
Let $(\AA,\varphi)$ be a non-commutative probability space. The \textit{free cumulants} are the family of multilinear functionals $\{\kappa_n : \AA^{n}\to \cc\}_{n\geq 1}$ recursively defined by the following formula:
\begin{equation}
\label{FreeMomCum}
	\varphi_n(a_1, \dots, a_n) =\sum_{\pi \in \NN\CC(n)} \kappa_\pi(a_1, \dots, a_n).
\end{equation} 
If we are just working with one variable, and all the arguments are the same $a_1=a_2= \dots= a_n=a$, then we adopt the simpler notation $\kappa_n(a):=\kappa_n(a, \dots, a)$ and $\kappa_\pi(a):=\kappa_\pi(a, \dots, a)$
\end{defi}

\begin{rem}
Cumulants are well defined since the right-hand side of the equation contains only one $\kappa_n$ term and the other terms are monomials of cumulants of smaller sizes. Thus we can recursively define $\kappa_n$ in terms of $\varphi_n$ and $\kappa_{n-1},\kappa_{n-2},\dots, \kappa_1$.
\end{rem}

\begin{thm}[Vanishing of mixed cumulants, see e.g.   \cite{NS}, Lecture 11]
Given a non-commutative probability space $(\AA,\varphi)$ and $a,b\in\AA$ two random variables. Then the following two statements are equivalent: 
\begin{enumerate}
    \item[1.] $a$ and $b$ are free.
    \item[2.] Every mixed cumulant vanishes. Namely, for every $n\geq 2$ and $a_1,\dots,a_n\in\{a,b\}$ which are not all equal, we have that $\kappa_n(a_1,\dots,a_n)=0$.
\end{enumerate}
\end{thm}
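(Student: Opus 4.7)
The plan is to establish both implications via the moment-cumulant formula \eqref{FreeMomCum}, carefully analyzing in each direction which non-crossing partitions survive under the respective vanishing hypothesis.

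For (2) $\Rightarrow$ (1), I would verify Voiculescu's original criterion: that a product of alternating centered elements in $\{a,b\}$ has expectation zero. Given an alternating sequence $a_1,\dots,a_n$ with $\varphi(a_i)=0$ for each $i$, expand $\varphi(a_1\cdots a_n)$ using \eqref{FreeMomCum}; by hypothesis the only surviving $\pi \in \NN\CC(n)$ are those whose blocks are monochromatic in the color sequence, equivalently $\pi \leq \ker$. Since the sequence alternates, $\ker$ separates odd from even positions, and a standard non-crossing argument shows that any $\pi \in \NN\CC(n)$ refining such a bipartition must contain at least one singleton block. That singleton contributes $\kappa_1(a_i)=\varphi(a_i)=0$, so $\varphi(a_1\cdots a_n)=0$, establishing Voiculescu's condition and therefore freeness.

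For (1) $\Rightarrow$ (2), I would proceed by induction on $n$. The base case $n=2$ is the identity $\kappa_2(a,b)=\varphi(ab)-\varphi(a)\varphi(b)=0$, immediate from the defining equation of freeness. For the inductive step, fix a non-monochromatic sequence $a_1,\dots,a_n \in \{a,b\}$ with kernel partition $\ker$. The natural route is to first establish the intermediate \emph{restricted moment-cumulant characterization} of freeness,
\[
\varphi(a_1\cdots a_n)=\sum_{\pi\in\NN\CC(n),\,\pi\leq\ker}\kappa_\pi(a_1,\dots,a_n),
\]
by an auxiliary induction in which one centers each $a_i$ (writing $a_i = (a_i - \varphi(a_i)\cdot 1) + \varphi(a_i)\cdot 1$) and applies Voiculescu's alternating-centered condition together with multilinearity to match terms. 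Combining this identity with the full formula \eqref{FreeMomCum} yields $\sum_{\pi\not\leq\ker}\kappa_\pi(a_1,\dots,a_n)=0$. Every $\kappa_\pi$ in this remaining sum with $\pi<1_n$ factors as a product containing at least one strictly smaller mixed cumulant, which vanishes by the outer inductive hypothesis; the only non-trivial term is $\kappa_n(a_1,\dots,a_n)$ itself, which must therefore be zero.

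The main obstacle is establishing the restricted moment-cumulant characterization of freeness: the auxiliary centering induction requires careful tracking of how the non-crossing partition structure interacts with the bookkeeping of centered versus non-centered moments. This step is the core content of Theorem 11.20 in \cite{NS}, and any clean exposition will occupy most of the work; the remaining reductions above are then essentially formal consequences of Möbius inversion on $\NN\CC(n)$ together with multilinearity of $\kappa_n$.
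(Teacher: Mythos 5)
The paper does not actually prove this theorem; it is imported verbatim from \cite{NS}, Lecture 11, so there is no in-paper argument to compare against, and your proposal must be judged on its own. Your sketch of the direction (1) $\Rightarrow$ (2) is the standard one and is sound as an outline: the restricted moment--cumulant formula for free arguments, subtracted from \eqref{FreeMomCum}, isolates $\sum_{\pi\not\leq\ker}\kappa_\pi=0$, and the induction kills every term except $\kappa_n(a_1,\dots,a_n)$ because any $\pi<1_n$ with $\pi\not\leq\ker$ has a mixed block of size at least $2$ and at most $n-1$.

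The direction (2) $\Rightarrow$ (1) as written has a genuine gap. Freeness of the two random variables $a$ and $b$ means freeness of the unital subalgebras $\mathrm{alg}(1,a)$ and $\mathrm{alg}(1,b)$, so Voiculescu's condition must be verified for alternating tuples of \emph{centered polynomials} $p_1(a),q_1(b),p_2(a),\dots$, not merely for the centered generators $a-\varphi(a)1_{\AA}$ and $b-\varphi(b)1_{\AA}$. Your argument only treats the latter, and that condition is strictly weaker than freeness: it does not, for instance, force $\varphi(a^2b^2)=\varphi(a^2)\varphi(b^2)$. To run your expansion for polynomial entries you must first know that mixed cumulants whose arguments are polynomials in $a$ or in $b$ vanish, whereas hypothesis (2) only controls cumulants with arguments in $\{a,b\}$. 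The missing ingredient is the products-as-arguments formula (the general form of Theorem \ref{thm.products.arguments}): a cumulant $\kappa_m(p_1,\dots,p_m)$ with monomial entries of mixed type expands as a sum of $\kappa_\pi$ over partitions $\pi$ whose join with the interval partition is $1$, so each such $\pi$ must connect two intervals carrying different letters and therefore contains a mixed block in $\{a,b\}$, which vanishes by (2). With that reduction in place, your singleton argument (every non-crossing $\pi$ below the kernel of an alternating word has an interval block, which is then forced to be a singleton and contributes $\kappa_1=\varphi=0$) does complete the proof. Either insert this reduction explicitly, or prove the subalgebra version of the equivalence first and then pass to the generators via products-as-arguments.
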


Finally, when working with cumulants whose entries have products of the underlying algebra $\AA$, there is an efficient formula that allows us to write this cumulant as a sum over cumulants with more entries, where the products are now separated into different entries. The general formula was found in \cite{krawczyk2000combinatorics} and is known as the products as arguments formula. Here we will just use a particular case.

\begin{thm}[Products as arguments formula]
\label{thm.products.arguments}
Let $(\AA,\varphi)$ be a non-commutative probability space and fix $n\in\nn$. Let $a_1,a_2,\dots ,a_{2n}\in\AA$ be random variables, and consider $I_{2n}:=\{\{1,2\},\{3,4\},\dots,\{2n-1,2n\}$ the unique interval pair partition. Then we have that
\begin{equation}
\label{eq.products.arguments}
    \kappa_{n}(a_1a_2,a_3a_4,\dots,a_{2n-1}a_{2n})=\sum_{\substack{\pi\in\NN\CC(2n)\\ \pi \lor I_{2n}=1_{2n} }} \kappa_{2n}(a_1,a_2,a_3,a_4,\dots,a_{2n-1},a_{2n}).
\end{equation}
\end{thm}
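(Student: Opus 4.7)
The plan is to derive the formula via M\"obius inversion in $\NN\CC(n)$ combined with the moment-cumulant expansion on $\NN\CC(2n)$; this is the restricted version of the Krawczyk--Speicher products-as-arguments formula. Setting $b_i := a_{2i-1}a_{2i}$ for $i=1,\dots,n$, the starting point is the M\"obius-inverse form of the moment-cumulant relation,
\[
\kappa_n(b_1,\dots,b_n) \;=\; \sum_{\tau \in \NN\CC(n)} \mu(\tau, 1_n)\, \varphi_\tau(b_1,\dots,b_n).
\]

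The bridge between the two lattices is the ``doubling'' embedding $\tau \mapsto \widehat{\tau}$ that sends $\tau \in \NN\CC(n)$ to the partition in $\NN\CC(2n)$ obtained by replacing each index $i$ in a block of $\tau$ with the pair $\{2i-1, 2i\}$. Its image is exactly $\{\pi \in \NN\CC(2n) : \pi \geq I_{2n}\}$, it is an order isomorphism onto that image, and it satisfies $\varphi_\tau(b_1,\dots,b_n) = \varphi_{\widehat{\tau}}(a_1,\dots,a_{2n})$, since expanding each $b_i$ inside a block of $\tau$ produces exactly the product of moments associated with the corresponding block of $\widehat{\tau}$. Applying the moment-cumulant formula $\varphi_{\widehat{\tau}} = \sum_{\sigma \leq \widehat{\tau}} \kappa_\sigma$ and swapping the order of summation then gives
\[
\kappa_n(b_1,\dots,b_n) \;=\; \sum_{\sigma \in \NN\CC(2n)} \kappa_\sigma(a_1,\dots,a_{2n}) \bigg( \sum_{\tau \in \NN\CC(n) :\, \widehat{\tau}\geq \sigma} \mu(\tau, 1_n)\bigg).
\]

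Next I would simplify the inner sum. Since $\widehat{\tau}\geq I_{2n}$ for every $\tau$, the condition $\widehat{\tau}\geq \sigma$ is equivalent to $\widehat{\tau}\geq \sigma \lor I_{2n}$. By Remark \ref{rem.lor.tildelor}, because $I_{2n}$ is an interval partition the join $\sigma \lor I_{2n}$ taken in $\NN\CC(2n)$ coincides with the join in $\PP(2n)$, and since it refines $I_{2n}$ from above it lies in the image of the embedding, so $\sigma \lor I_{2n} = \widehat{\rho}$ for a unique $\rho \in \NN\CC(n)$. Then $\widehat{\tau}\geq \widehat{\rho}$ reduces to $\tau\geq \rho$ in $\NN\CC(n)$, and the inner sum becomes $\sum_{\tau\geq \rho} \mu(\tau, 1_n) = \delta_{\rho, 1_n}$ by the defining property of the M\"obius function.

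Consequently the only $\sigma$ contributing are those with $\widehat{\rho} = 1_{2n}$, equivalently $\sigma\lor I_{2n}=1_{2n}$, yielding the claimed identity. The main point to verify carefully is that $\sigma\lor I_{2n}$ indeed descends to a non-crossing partition of $[n]$ via the doubling embedding; this amounts to the observation that any $\pi \in \NN\CC(2n)$ with $\pi\geq I_{2n}$ keeps each pair $\{2i-1, 2i\}$ inside a single block and therefore collapses unambiguously to an element of $\NN\CC(n)$.
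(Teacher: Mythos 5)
Your argument is correct and complete: the M\"obius-inversion computation, the doubling embedding $\tau\mapsto\widehat{\tau}$ onto the interval $[I_{2n},1_{2n}]$ in $\NN\CC(2n)$, and the collapse of the inner sum to $\delta_{\rho,1_n}$ all check out, and together they yield exactly \eqref{eq.products.arguments}. Note that the paper does not prove this statement at all --- it quotes it as a known special case of the products-as-arguments formula of Krawczyk and Speicher \cite{krawczyk2000combinatorics} --- and your proof is essentially the standard one found there and in Lecture 11 of \cite{NS}, so there is nothing to reconcile.
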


\section{A formula for the anti-commutator}
\label{sec:anticomm} 

The goal of this section is to prove our main result, Theorem \ref{Thm.anticommutator.main}. The approach relies on first performing standard computations using linearity of the cumulants and products as arguments formula, to get a sum indexed by partitions $\pi$, this is the content of Proposition \ref{prop.basic.anticomm}. Then, the key idea is that the graph $\GG_\pi$ puts into evidence many partitions $\pi$ that do not really contribute to the previous sum, this is done in Proposition \ref{prop.few.nonvanishing.epsilons}. This ultimately yields that the sum is indexed just by $\NCac_{2n}$. The proof of Proposition \ref{prop.NCacd.nice.description}, which is independent from Theorem \ref{Thm.anticommutator.main} will be given in Section \ref{sec:NCacd}. 

Through this section we are going to fix two free random variables $a,b$, and a natural number $n\in\nn$. Our ultimate goal is to describe the $n$-th free cumulant of the anti-commutator $\kappa_n(ab+ba)$ in terms of the cumulants of $a$ and $b$. By multilinearity of the cumulants this amounts to study $n$-th cumulants with entries given by $ab$ or $ba$. To keep track of this kind of expressions we fix the following notation.

\begin{nota}
\label{nota.epsilons}
Given non-commutative variables $a, b$, we use the notation $(ab)^1:=ab$ and $(ab)^{*}:=ba$. Given a $n$-tuple $\varepsilon \in \{1,*\}^n$, we denote by $(ab)^\varepsilon:=((ab)^{\varepsilon(1)},(ab)^{\varepsilon(2)},\dots,(ab)^{\varepsilon(n)})$ the $n$-tuple with entries $ab$ or $ba$ dictated by the entries of $\varepsilon$.  Furthermore, we will denote by $(a,b)^{\varepsilon}$ the $2n$-tuple obtained from separating the $a$'s from the $b$'s in the $n$-tuple $(ab)^\varepsilon$.  To keep track of the entries in $(a,b)^{\varepsilon}$ that contain an $a$ we use the notation
\[
A(\varepsilon):=\{ 2i-1 |1\leq i\leq n,\ \varepsilon(i)=1\}\cup \{ 2i |1\leq i\leq n,\ \varepsilon(i)=*\}.
\]
Then the entries in $(a,b)^{\varepsilon}$ that contain a $b$ are given by
$B(\varepsilon):=[2n]\backslash A(\varepsilon)$.

For example, if we consider $\varepsilon=(1,*,*,1,*,1) \in \{1,*\}^6$, then $(ab)^\varepsilon=(ab,ba,ba,ab,ba,ab)$ and if we split each entry we get 
\[
(a,b)^{\varepsilon}=(a,b,b,a,b,a,a,b,b,a,a,b).
\]
This means that
\[
A(\varepsilon)=\{1,4,6,7,10,11\},\qquad\text{and}\qquad B(\varepsilon)=\{2,3,5,8,9,12\}.
\]
\end{nota}

Using the products as entries formula together with the vanishing of mixed cumulants, we can easily rewrite the $n$-th cumulant of the anti-commutator. This is a combination of ideas that is commonly found in arguments involving the combinatorics of free probability.
\begin{prop}
\label{prop.basic.anticomm}
The free cumulants of the anti-commutator $ab+ba$ of two free random variables $a,b$ satisfy the following formula for all $n\in \nn$:
\begin{equation}
\label{eq.rephrase}
\kappa_n(ab+ba) =\sum_{\substack{\pi\in \NN\CC(2n)\\ \pi\lor I_{2n}=1_{2n} }} \sum_{\substack{\varepsilon\in \{1,*\}^{n}\\ \{A(\varepsilon),B(\varepsilon)\}\geq \pi  }} 
\Bigg( \prod_{\substack{V\in \pi, \\ V\subset A(\varepsilon)}} \kappa_{|V|}(a) \Bigg) \Bigg( \prod_{\substack{W\in \pi, \\ W\subset B(\varepsilon)}} \kappa_{|W|}(b) \Bigg),
\end{equation}
where $I_{2n}:=\{\{1,2\},\{3,4\},\dots,\{2n-1,2n\}\}$ is the unique interval pair partition of $[2n]$, and $1_{2n}$ is the maximum partition.
\end{prop}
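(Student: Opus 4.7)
The plan is to combine three standard tools of free probability: (i) multilinearity of the cumulant $\kappa_n$; (ii) the products as arguments formula (Theorem~\ref{thm.products.arguments}); and (iii) the characterization of freeness via vanishing of mixed cumulants.

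First I would use multilinearity to expand
\[
\kappa_n(ab+ba,\ldots,ab+ba) = \sum_{\varepsilon\in \{1,*\}^n} \kappa_n\bigl((ab)^{\varepsilon(1)},\ldots,(ab)^{\varepsilon(n)}\bigr).
\]
For each fixed $\varepsilon$, every entry $(ab)^{\varepsilon(i)}$ is a product of two letters from $\{a,b\}$, so I can apply the products as arguments formula to split each entry. Using Notation~\ref{nota.epsilons}, this rewrites the right-hand side as
\[
\kappa_n(ab+ba) \;=\; \sum_{\varepsilon\in \{1,*\}^n}\ \sum_{\substack{\pi\in \NN\CC(2n)\\ \pi\lor I_{2n}=1_{2n}}} \kappa_\pi\bigl((a,b)^{\varepsilon}\bigr).
\]

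Next I would swap the order of summation and analyze when $\kappa_\pi((a,b)^\varepsilon)$ can be nonzero. Recall that $\kappa_\pi$ is a product of cumulants, one per block of $\pi$. Because $a$ and $b$ are free, the vanishing of mixed cumulants tells us that any cumulant whose arguments involve both an $a$ and a $b$ is zero. Thus $\kappa_\pi((a,b)^\varepsilon)$ vanishes unless, for every block $V\in\pi$, all the positions in $V$ correspond to the same letter. In terms of Notation~\ref{nota.epsilons}, this is precisely the condition that every block of $\pi$ is contained either in $A(\varepsilon)$ or in $B(\varepsilon)$, i.e.\ $\{A(\varepsilon),B(\varepsilon)\}\geq \pi$ in the reverse refinement order.

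When this condition is satisfied, each block $V\subset A(\varepsilon)$ contributes $\kappa_{|V|}(a)$ and each block $W\subset B(\varepsilon)$ contributes $\kappa_{|W|}(b)$, giving the factorization
\[
\kappa_\pi\bigl((a,b)^{\varepsilon}\bigr) \;=\; \Bigl(\prod_{\substack{V\in\pi\\ V\subset A(\varepsilon)}} \kappa_{|V|}(a)\Bigr) \Bigl(\prod_{\substack{W\in\pi\\ W\subset B(\varepsilon)}} \kappa_{|W|}(b)\Bigr).
\]
Substituting this back and keeping only the nonzero terms (which are exactly those with $\{A(\varepsilon),B(\varepsilon)\}\geq \pi$) produces Equation~\eqref{eq.rephrase}.

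No step here is really an obstacle: this proposition is essentially the standard bookkeeping one does at the start of any cumulant computation for a polynomial in free variables. The only minor care is to cite Theorem~\ref{thm.products.arguments} correctly per entry (since each of the $n$ arguments is a product of exactly two letters, the resulting constraint is the single condition $\pi \lor I_{2n}=1_{2n}$ over all $2n$ letters), and to phrase the vanishing condition in terms of the subsets $A(\varepsilon),B(\varepsilon)$. The genuine combinatorial work of the paper begins afterwards, when one must understand \emph{which} pairs $(\pi,\varepsilon)$ actually survive in \eqref{eq.rephrase} and how they are organized by the graph $\GG_\pi$.
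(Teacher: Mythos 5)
Your proposal is correct and matches the paper's own proof essentially line for line: multilinearity to sum over $\varepsilon\in\{1,*\}^n$, the products as arguments formula per fixed $\varepsilon$, an exchange of the two summations, and the vanishing of mixed cumulants to impose $\{A(\varepsilon),B(\varepsilon)\}\geq\pi$ and obtain the factorization into $a$-blocks and $b$-blocks. Nothing is missing.
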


\begin{proof}
For a fixed $\varepsilon\in \{1,*\}^{n}$, products as arguments formula \eqref{eq.products.arguments} asserts that
\[
\kappa_n((ab)^{\varepsilon})=\sum_{\substack{\pi\in \NN\CC(2n)\\ \pi\lor I_{2n}=1_{2n} }} \kappa_\pi((a,b)^{\varepsilon}).
\]
Therefore, if we sum over all possible $\varepsilon\in \{1,*\}^{n}$ we get that 

\begin{equation}
\label{eq.rephrase.1}
\kappa_n(ab+ba,\dots,ab+ba) =\sum_{\varepsilon\in \{1,*\}^{n}} \sum_{\substack{\pi\in \NN\CC(2n)\\ \pi\lor I_{2n}=1_{2n} }}  \kappa_\pi((a,b)^{\varepsilon})=\sum_{\substack{\pi\in \NN\CC(2n)\\ \pi\lor I_{2n}=1_{2n} }} \sum_{\varepsilon\in \{1,*\}^{n}} \kappa_\pi((a,b)^{\varepsilon}),
\end{equation}
where in the second equality we just changed the order of the sums. Finally, since $a$ and $b$ are free, every mixed cumulant will vanish and thus we require that $\{A(\varepsilon),B(\varepsilon)\}\geq \pi$ in order to get $\kappa_\pi((a,b)^{\varepsilon})\neq 0$. In this case we actually get
\[
\kappa_\pi((a,b)^{\varepsilon})=\Bigg( \prod_{\substack{V\in \pi, \\ V\subset A(\varepsilon)}} \kappa_{|V|}(a) \Bigg) \Bigg( \prod_{\substack{W\in \pi, \\ W\subset B(\varepsilon)}} \kappa_{|W|}(b) \Bigg).
\]
After substituting this into \eqref{eq.rephrase.1} we get the desired result.
\end{proof}

A key idea is that the right-hand side of \eqref{eq.rephrase} can be greatly simplified by observing that there are very few $\varepsilon$ satisfying $\{A(\varepsilon),B(\varepsilon)\}\geq\pi$. Here is where the graph $\GG_\pi$ from Definition \ref{defi.Graph.pi} becomes very useful. Actually, the reason to construct the edges of $\GG_\pi$ by joining the blocks containing $2i-1$ and $2i$ is specifically to store information regarding $\pi\lor I_{2n}$. This is made precise in the next lemma.

\begin{lemma}
\label{Lemma.connected}
Fix a partition $\pi\in \NN\CC(2n)$. Then $\pi\lor I_{2n}=1_{2n}$ if and only if $\GG_\pi$ is connected.
\end{lemma}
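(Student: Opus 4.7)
The plan is to translate the join condition $\pi \lor I_{2n} = 1_{2n}$ into an element-level statement via Remark~\ref{rem.equivalence.join}, and then observe that this statement is in direct correspondence with connectivity of $\GG_\pi$. Since $I_{2n}$ is an interval partition, the remark gives that $\pi \lor I_{2n}$ equals the partition join $\pi \lor_\PP I_{2n}$ and that $\pi \lor_\PP I_{2n} = 1_{2n}$ if and only if every two elements $i,j \in [2n]$ can be linked by a chain $i = i_0, i_1, \ldots, i_{2k+1} = j$ in which the pairs $(i_{2l}, i_{2l+1})$ belong to the same block of $I_{2n}$ and the pairs $(i_{2l+1}, i_{2l+2})$ belong to the same block of $\pi$.

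The key observation is that the blocks of $I_{2n}$ are in bijection with the edges of $\GG_\pi$: for each $k \in [n]$ the pair $\{2k-1, 2k\}$ corresponds to the edge joining the $\pi$-block of $2k-1$ to the $\pi$-block of $2k$ (a loop if these coincide). Thus, in the alternating chain above, a step $i_{2l} \sim_{I_{2n}} i_{2l+1}$ either is trivial (if $i_{2l} = i_{2l+1}$) or uses exactly one edge of $\GG_\pi$ from the $\pi$-block of $i_{2l}$ to the $\pi$-block of $i_{2l+1}$; meanwhile a step $i_{2l+1} \sim_\pi i_{2l+2}$ stays at the same vertex of $\GG_\pi$.

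For the direction ($\Leftarrow$), assuming $\GG_\pi$ connected, take any $i, j \in [2n]$ with $\pi$-blocks $V$ and $W$. Pick a path $V = U_0, U_1, \ldots, U_m = W$ in $\GG_\pi$, with the edge joining $U_{l-1}$ to $U_l$ coming from the pair $\{2k_l - 1, 2k_l\}$. Build the chain by successively hopping: from $i \in V$ (use the $\pi$-step) to whichever of $2k_1-1, 2k_1$ lies in $U_0$, then (via $I_{2n}$) to its partner in $U_1$, then (via $\pi$) to the element of $\{2k_2-1, 2k_2\}$ lying in $U_1$, and so on, finishing with a $\pi$-step within $W$ to reach $j$. (Minor bookkeeping is needed to insert trivial $I_{2n}$-steps at the very beginning and end so the pattern starts and ends with an $I_{2n}$-relation as required by the remark.)

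For the direction ($\Rightarrow$), assume $\pi \lor I_{2n} = 1_{2n}$ and let $V, W$ be two blocks of $\pi$. Choose $i \in V$, $j \in W$, and take the alternating chain linking them. Reading off the blocks of $\pi$ containing $i_0, i_1, i_2, \ldots, i_{2k+1}$ yields a walk in $\GG_\pi$ (the $\pi$-steps stay at the same vertex; each non-trivial $I_{2n}$-step traverses an edge by the observation above), joining $V$ to $W$. Hence $\GG_\pi$ is connected. No serious obstacle is anticipated in this proof; it is essentially a dictionary between chains in the join characterization and walks in $\GG_\pi$.
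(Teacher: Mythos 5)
Your proposal is correct and follows essentially the same route as the paper: both translate $\pi\lor I_{2n}=1_{2n}$ into the alternating-chain characterization of $\pi\lor_\PP I_{2n}$ via Remark \ref{rem.equivalence.join}, and then set up the dictionary between such chains and walks in $\GG_\pi$. Your handling of the trivial $I_{2n}$-steps and the bookkeeping at the endpoints is slightly more explicit than the paper's, but the argument is the same.
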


\begin{proof}
Recall from Remark \ref{rem.lor.tildelor} that $\pi\lor I_{2n}=1_{2n}$ if and only if $\pi\lor_\PP I_{2n}=1_{2n}$, and this is equivalent to the fact that for every two elements $i,j\in [2n]$ we can find elements $i=i_0,i_1,i_2,\dots, i_{2k+1}=j$ such that for $l=0,\dots,k$, the pair $i_{2l},i_{2l+1}$ is in the same block of $I_{2n}$ and the pair $i_{2l+1},i_{2l+2}$ is in the same block $V_l$ of $\pi$. Observe that $i_{2l},i_{2l+1}$ being in the same block of $I_{2n}$, means that $(i_{2l},i_{2l+1})$ is an edge in $\GG_\pi$ and since $i_{2l+1},i_{2l+2}\in V_l\in \pi$, then edges $(i_{2l},i_{2l+1})$ and $(i_{2l+2},i_{2l+3})$ have the vertex $V_l$ in common. Thus, to every sequence we can associate a path in $\GG_\pi$ between the block containing $i$ and the block containing $j$.
    
Therefore if we assume that $\pi\lor_\PP I_{2n}=1_{2n}$ and want to prove that $\GG_\pi$ is connected, we take two arbitrary blocks $V,W\in \pi$ and some representatives $i\in V$ and $j\in W$. Since $\pi\lor_\PP I_{2n}=1_{2n}$ we have a sequence $i=i_0,i_1,i_2,\dots, i_{2k+1}=j$ and thus a path in $\GG_\pi$ between the $V$ and $W$, so $\GG_\pi$ is connected.
     
Conversely, assume that $\GG_\pi$ is connected, and fix $i,j\in [2n]$. Then we consider the blocks $V,W\in\pi$ such that $i\in V$ and $j\in W$. Since $\GG_\pi$ is connected there exists a path connecting $V$ and $W$ in $\GG_\pi$, which in turn gives us a sequence connecting $i$ and $j$, and thus $\pi\lor_\PP I_{2n}=1_{2n}$.
\end{proof}

Now we can use $\GG_\pi$ to test when $\{A(\varepsilon),B(\varepsilon)\}\geq \pi$ for a fixed $\pi\in \NN\CC(n)$ and a fixed $\varepsilon\in \{1,*\}^{n}$. Turns out that $\GG_\pi$ must be bipartite.

\begin{rem}[Bipartite graphs]
\label{rem.bipartite.graph}
Let $\GG=(V_\GG,E_\GG)$ be an undirected graph (where loops and multiedges are allowed), we say that $\GG$ is \emph{bipartite} if there exist a partition $\{V'_\GG,V''_\GG\}$ of the set of vertices $V_\GG$ such that there are no edges $E_\GG$ connecting two vertices in $V'_\GG$ or connecting two vertices in $V''_\GG$. In other words, all vertices connect a vertex in $V'_\GG$ with a vertex in $V''_\GG$. 

Moreover, if $\GG$ is connected and bipartite, then the bipartition $\{V'_\GG,V''_\GG\}$ is unique (up to the permutation $\{V''_\GG,V'_\GG\}$). This is because once we identify a vertex $v\in V'_\GG$, then the set of any other vertex $u\in  V_\GG$ is determined by its distance to $v$. If the distance is even, then $u\in V'_\GG$, and if the distance is odd, then $u\in V''_\GG$.
\end{rem}

\begin{prop}
\label{prop.few.nonvanishing.epsilons}
Let $\pi\in\NN\CC(2n)$ such that $\pi\lor I_{2n}=1_{2n}$ and consider $\GG_\pi$.
\begin{itemize}
    \item If there exists an $\varepsilon\in\{1,*\}^n$ such that $\{A(\varepsilon),B(\varepsilon)\}\geq \pi$, then $\GG_\pi$ is bipartite.
    \item Moreover, if $\GG_\pi$ is bipartite, then there are exactly two tuples $\varepsilon\in\{1,*\}^n$ satisfying $\{A(\varepsilon),B(\varepsilon)\}\geq \pi$. Furthermore, these tuples are completely opposite, that is, they do not coincide in any entry.
\end{itemize}
\end{prop}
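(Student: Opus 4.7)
The plan is to exploit the fact that, by the definition in Notation \ref{nota.epsilons}, for each $k \in [n]$ exactly one of the elements $2k-1, 2k$ lies in $A(\varepsilon)$ and the other in $B(\varepsilon)$; equivalently, $\{A(\varepsilon),B(\varepsilon)\}$ always separates the two endpoints of every edge of $\GG_\pi$. The hypothesis $\{A(\varepsilon),B(\varepsilon)\} \geq \pi$ upgrades this element-level separation to a separation at the level of blocks: each vertex of $\GG_\pi$ sits entirely inside either $A(\varepsilon)$ or $B(\varepsilon)$.

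For the first bullet, I would suppose such an $\varepsilon$ exists and define $V' := \{V \in \pi : V \subseteq A(\varepsilon)\}$ and $V'' := \{V \in \pi : V \subseteq B(\varepsilon)\}$. These form a partition of the vertex set of $\GG_\pi$, and by the observation above every edge of $\GG_\pi$ joins a vertex of $V'$ to a vertex of $V''$. Hence $\GG_\pi$ is bipartite. Notice this also precludes loops: a loop at a block $V$ would require some $2k-1$ and $2k$ to both lie in $V$, but they are always on opposite sides of $\{A(\varepsilon),B(\varepsilon)\}$.

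For the second bullet, since $\GG_\pi$ is connected and bipartite, the bipartition $\{V'_\GG,V''_\GG\}$ of its vertex set is unique up to swap (Remark \ref{rem.bipartite.graph}). Any $\varepsilon$ satisfying $\{A(\varepsilon),B(\varepsilon)\}\geq \pi$ produces, via the argument of the first part, a bipartition of $V_{\GG_\pi}$, which by this uniqueness must equal $\{V'_\GG,V''_\GG\}$; all that remains to specify $\varepsilon$ is which side plays the role of $A$. Each of the two choices determines a unique $\varepsilon$ via the rule that $\varepsilon(k) = 1$ if the block containing $2k-1$ lies in the designated $A$-side, and $\varepsilon(k) = *$ otherwise; a direct check using bipartiteness confirms both choices actually satisfy $\{A(\varepsilon),B(\varepsilon)\} \geq \pi$. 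Since swapping the two sides flips every coordinate of $\varepsilon$, the two valid tuples are completely opposite.

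I do not expect a real obstacle: the whole argument is a clean translation between tuples $\varepsilon \in \{1,*\}^n$ and ordered bipartitions of $V_{\GG_\pi}$ witnessing bipartiteness of $\GG_\pi$. The one point worth underlining is the invocation of uniqueness of the bipartition of a connected bipartite graph, which is precisely where connectedness of $\GG_\pi$ (equivalently, the standing hypothesis $\pi \lor I_{2n} = 1_{2n}$ together with Lemma \ref{Lemma.connected}) is essential.
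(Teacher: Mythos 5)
Your proposal is correct and follows essentially the same route as the paper: the first bullet reads off a bipartition of $\GG_\pi$ from the blockwise containment in $A(\varepsilon)$ or $B(\varepsilon)$, and the second bullet invokes connectedness (via Lemma \ref{Lemma.connected}) and the uniqueness of the bipartition of a connected bipartite graph to pin down exactly two complementary tuples. Your extra remarks on the exclusion of loops and on verifying that both candidate tuples genuinely satisfy the condition are welcome but do not change the argument.
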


\begin{proof}
For the first part, the existence of an $\varepsilon\in\{1,*\}^n$ such that $\{A(\varepsilon),B(\varepsilon)\}\geq \pi$ implies that we can write $\pi=\{A_1,\dots,A_r,B_1,\dots, B_s\}$ such that $A_i\subset A(\varepsilon)$ for $i=1,\dots, r$ and $B_j\subset B(\varepsilon)$ for $j=1,\dots,s$. If we consider $V'=\{A_1,\dots,A_r\}$ and $V''=\{B_1,\dots, B_s\}$, then $\{V',V''\}$ is a bipartition of $\GG_\pi$. Indeed, by construction of $\GG_\pi$, if $e\in E_{\GG_\pi}$, then $e$ connects the blocks containing elements $2k-1$ and $2k$ for some $k=1,\dots,n$. However, by definition of $A(\varepsilon)$ and $B(\varepsilon)$, one must contain $2k-1$ while the other contains $2k$. Thus $e$ connects vertices from different sets and $\{V',V''\}$ is actually a bipartition of $\GG_\pi$.

For the second part, since $\pi\lor I_{2n}=1_{2n}$ we know from Lemma \ref{Lemma.connected} that $\GG_\pi$ is connected. Then, if $\GG_\pi$ is also bipartite, by Remark \ref{rem.bipartite.graph} there exist a unique bipartition $\{\pi',\pi''\}$ of the vertices of $\GG_\pi$ (blocks of $\pi$), say $\pi'=\{V_1,\dots,V_r\}$ and $\pi''=\{W_1,\dots, W_s\}$, where $V_1$ contains the element 1. Then we must have $A(\varepsilon)=V_1\cup \dots \cup V_r$ or $A(\varepsilon)=W_1\cup \dots \cup W_s$. This clearly determines $\varepsilon$, since $\varepsilon(i)=1$ if and only if $2i-1\in A(\varepsilon)$. Furthermore, since $(V_1\cup \dots \cup V_r)\cap (W_1\cup \dots \cup W_s)=\emptyset$, then the two possible $\varepsilon$ do not coincide in any entry.
\end{proof}

\begin{nota}
In light of the previous result, given a $\pi\in \NN\CC(2n)$ such that $\GG_\pi$ is connected and bipartite, we will denote by $\varepsilon_\pi$ the (unique) tuple such that $\{A(\varepsilon_\pi),B(\varepsilon_\pi)\}\geq \pi$ and $1\in A(\varepsilon)$. And we denote by $\varepsilon'_\pi$ the other possible tuple, which actually satisfies that $A(\varepsilon'_\pi)=B(\varepsilon_\pi)$ and $B(\varepsilon'_\pi)=A(\varepsilon_\pi)$.
\end{nota}

\begin{rem}
\label{rem.pi.prime}
Recall from Definition \ref{main.defi} that every $\pi\in \NCac_{2n}$ is naturally decomposed as $\pi:=\pi'\sqcup \pi''$ where $(\pi',\pi'')$ is the bipartition of $\GG_\pi$. From the previous proof we can observe that we have the equalities $\pi'=\pi|A(\varepsilon_\pi)$ and $\pi''=\pi|B(\varepsilon_\pi)$.  
\end{rem}

We are now ready to prove our main result.

\begin{proof}[Proof of Theorem \ref{Thm.anticommutator.main}]
Our starting point is Equation \eqref{eq.rephrase} from Proposition \ref{prop.basic.anticomm}. Using Lemma \ref{Lemma.connected}, it can be rephrased as
\begin{equation}
\label{eq.2}
\kappa_n(ab+ba) =\sum_{\substack{\pi\in \NN\CC(2n)\\ \GG_\pi\mbox{ is connected} }} \sum_{\substack{\varepsilon\in \{1,*\}^{n}\\ \{A(\varepsilon),B(\varepsilon)\}\geq \pi  }} 
\Bigg( \prod_{\substack{V\in \pi, \\ V\subset A(\varepsilon)}} \kappa_{|V|}(a) \Bigg) \Bigg( \prod_{\substack{W\in \pi, \\ W\subset B(\varepsilon)}} \kappa_{|W|}(b) \Bigg).
\end{equation}
By Proposition \ref{prop.few.nonvanishing.epsilons} the condition $\{A(\varepsilon),B(\varepsilon)\}\geq \pi$ is only true if $\GG_\pi$ is bipartite. Furthermore, in this case $\varepsilon$ can only be one of $\varepsilon_\pi$ or $\varepsilon'_\pi$. Therefore, the right-hand side of \eqref{eq.2} can be simplified to  
\[
\sum_{\substack{\pi\in \NN\CC(2n),\\ \GG_\pi\mbox{ is connected,} \\ \GG_\pi\mbox{ is bipartite} }} 
\Bigg( \prod_{\substack{V\in \pi, \\ V\subset A(\varepsilon_\pi)}} \kappa_{|V|}(a) \prod_{\substack{W\in \pi, \\ W\subset B(\varepsilon_\pi)}} \kappa_{|W|}(b) +
 \prod_{\substack{V\in \pi, \\ V\subset A(\varepsilon'_\pi)}} \kappa_{|V|}(a)  \prod_{\substack{W\in \pi, \\ W\subset B(\varepsilon'_\pi)}} \kappa_{|W|}(b) \Bigg).
\]
Finally, from Remark \ref{rem.pi.prime} we have that $\pi'=\{V\in\pi:V\subset A(\varepsilon_\pi)\}=\{W\in \pi: W\subset B(\varepsilon'_\pi)\}$ and similarly $\pi''=\{W\in\pi:W\subset B(\varepsilon_\pi)\}=\{V\in \pi: V\subset A(\varepsilon'_\pi)\}$, so we obtain the desired formula \eqref{formula.anticommutator.main}.
\end{proof}

\section{Detailed study of the set \texorpdfstring{$\mathcal{Y}_{2n}$}{} }
\label{sec:NCacd}

We begin this section by proving Proposition \ref{prop.NCacd.nice.description}. Then we give some basic remarks and collect various facts concerning the set $\NCacd_{2n}$. The goal is to have a better understanding of this set, which leads to a better understanding of $\NCac_{2n}$ and simplifies the computations when applying Theorem \ref{Thm.anticommutator.main}. The main result in this section is Theorem \ref{thm.recurrence.relation}, that provides a recursive formula to compute the values $|\NCacd_{2n}|$.

In order to apply Theorem \ref{Thm.anticommutator.main} it is useful to have an easy way to determine for which $\pi$ the graph $\GG_\pi$ is connected and bipartite. So far, the only way to check this is to actually construct the graph and analyze it. Here is where Proposition \ref{prop.NCacd.nice.description}, which asserts that $Kr(\NCacd_{2n})=\NCac_{2n}$, proves to be very useful, as the set $\NCacd_{2n}$ has a very simple description.

\begin{proof}[Proof of Proposition \ref{prop.NCacd.nice.description}]
Let us fix $\sigma\in\NN\CC(2n)$ and $\pi:=Kr(\sigma)$. the proof can be separated in two independent statements:

{\bf (A)} $\GG_\pi$ is connected if and only if $\sigma$ separates odd elements (there is no block $W\in \sigma$ containing two odd elements). 

Indeed, from Lemma \ref{Lemma.connected}, $\GG_\pi$ is connected if and only $\pi\lor I_{2n}=1_{2n}$. By properties of the Kreweras complement $\pi\lor I_{2n}=1_{2n}$ if and only if $Kr^{-1}(\pi)\land Kr^{-1}(I_{2n})=Kr^{-1}(1_{2n})=0_{2n}$. Since $Kr^{-1}(I_{2n})=\{\{1,3,\dots,2n-1\},\{2\},\{4\},\dots,\{2n\}\}$, for the latter to hold it is necessary and sufficient that all the odd elements of $\sigma=Kr^{-1}(\pi)$ are in different blocks. 

{\bf (B)} $\GG_\pi$ is bipartite if and only if $|W|$ is even for every block $W\in \sigma$ such that $W\subset\{2,4,\dots,2n\}$. 

In this case it is simpler to show that the negations of previous statements are equivalent. By standard arguments in graph theory (see for instance \cite{bollobas2013modern}) we know that $\GG_\pi$ is non-bipartite if and only if $\GG_\pi$ contains an odd cycle which in turn is equivalent to $\GG_\pi$ containing a simple odd cycle. Thus, to complete the proof we need to show that $\GG_\pi$ contains a simple odd cycle if and only if $\sigma$ contains a block $W$ with $|W|$ odd and $W\subset\{2,4,\dots, 2n\}$.

For the if statement take a simple cycle of $\GG_\pi$ with odd length $j$. Assume that the cycle has (in that order) the vertices $V_1, \dots, V_j\in \pi$  and edges $e_1,e_2,\dots, e_j$ corresponding to $(2i_1-1,2i_1),(2i_2-1,2i_2), \dots, (2i_j-1,2i_j)$. This means that for $k=1,\dots j$, we have that $2i_{k-1},2i_k-1\in V_k$ (with $i_0:=i_j$). Then, the previous restrictions are equivalent to the fact that $W:=\{2i_1,2i_2, \dots, 2i_j\}$ is a block of $\sigma$. Indeed, those elements are in the same block of $\sigma$ since $V_1, \dots, V_j$ are different blocks of $\pi$ (because the cycle is simple). Also $W$ do not contains any other element since this would generate a crossing with one of the arches $(2i_j,2i_1-1),(2i_1,2i_2-1), \dots, (2i_{j-1},2i_j-1)$ of $\pi$. 

Conversely, if $\sigma$ contains the block $W=\{2i_1,2i_2, \dots, 2i_j\}$ of odd size and completely contained in $\{2,4,\dots, 2n\}$. Then each of the arches $(2i_j,2i_1-1),(2i_1,2i_2-1), \dots, (2i_{j-1},2i_j-1)$ belongs to a different block of $\pi$ and this implies that $\GG_\pi$ contains an odd cycle with edges $(2i_1-1,2i_1),(2i_2-1,2i_2), \dots, (2i_j-1,2i_j)$.

Once we have {\bf (A)} and {\bf (B)}, we know that $\pi\in \NCac_{2n}$ if and only if $\sigma$ separates odd elements and $|W|$ is even for every block $W\in \sigma$ such that $W\subset\{2,4,\dots,2n\}$. Then $\sigma$ has blocks $B_1,B_3,\dots,B_{2n-1}$ with $i\in B_i$ for $i=1,3,\dots,2n-1$, and since the rest of the blocks $E_1,\dots,E_r$, are contained in $[2n]\backslash (B_1\cup B_3\cup\dots\cup B_{2n-1})\subset \{2,4,\dots,2n\}$ they must have even size. Therefore we conclude that $\pi \in \NCac_{2n}$ if and only if $\sigma\in \NCacd_{2n}$. 
\end{proof}

Now we turn to study the size of the set $\NCacd_{2n}$. A direct enumeration of this set looks rather complicated, but we can give a recurrence relation. For this we need to introduce the sets $\NCacd_{2n-1}$ for $n\in\nn$, that (analogously to the even case) consist of all non-crossing partitions of $[2n-1]$ that can be written as 
\[
\sigma=\{B_1,B_3,\dots, B_{2n-1},E_1,\dots,E_r\}, \quad \text{with }r\geq 0,
\]
where $i\in B_{i}$ for $i=1,3,\dots,2n-1$ and $|E_j|$ even for $j\leq r$.

\begin{thm}
\label{thm.recurrence.relation}
For every $n\in\nn$, denote $\alpha_n:=|\NCacd_{2n}|$ and $\beta_n:=|\NCacd_{2n-1}|$. Then, we have that $\alpha_1=\beta_1=1$ and the following relations hold:
\begin{equation}
\label{eq.recursion.an}
\alpha_n=\sum_{\substack{ 1\leq s\leq n \\ s \text{ is even} }}  \sum_{\substack{r_1+\dots+r_s=n \\ r_1,\dots,r_s\geq 1 }}  \beta_{r_1}\beta_{r_2}\cdots \beta_{r_s} +\sum_{j=1}^n \beta_{j}\beta_{n+1-j}\qquad \forall n\geq 2,
\end{equation}
\begin{equation}
\label{eq.recursion.bn}
\beta_n=\sum_{s=1}^{n}\sum_{\substack{r_1+\dots+r_s=n, \\ r_1,\dots,r_s\geq 1 }}\beta_{r_1}\beta_{r_2}\cdots \beta_{r_{s-1}}\alpha_{r_s}, \qquad \forall n\geq 2.
\end{equation}
\end{thm}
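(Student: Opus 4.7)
The plan is to prove both recurrences via a single underlying strategy: a canonical decomposition of each non-crossing partition according to the block containing its last element. For \eqref{eq.recursion.an}, take $\sigma\in\NCacd_{2n}$ and let $V$ be the block containing $2n$; two cases arise depending on whether $V$ is even-only or odd-containing, and these produce the two summands of the formula.

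In Case A, $V$ is an even-only block $V=\{2j_1<2j_2<\cdots<2j_s=2n\}$, necessarily of even size $s$. The elements of $V$ split $[2n]\setminus V$ into $s$ consecutive gaps, each of odd length $2r_l-1$ (both endpoints being even), where $r_l:=j_l-j_{l-1}$ and $j_0:=0$. Non-crossingness prevents any other block from straddling $V$, so the restriction of $\sigma$ to each gap is independent; after the natural relabeling $c+i\mapsto i$ this restriction belongs to $\NCacd_{2r_l-1}$, because odd positions stay odd and the even-size condition on even-only blocks is inherited. Summing over $s$ even and over compositions $r_1+\cdots+r_s=n$ recovers the first summand $\sum_{s\text{ even}}\sum\beta_{r_1}\cdots\beta_{r_s}$.

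In Case B, $V=B_{2k-1}$ for some $k\in\{1,\ldots,n\}$, and $V$ contains both $2k-1$ and $2n$. The partition $\sigma$ will be split at $2k-1$ into $\sigma_L:=\sigma|_{\{1,\ldots,2k-1\}}\in\NCacd_{2k-1}$ (contributing $\beta_k$) and $\sigma_R$, the relabeling of $\sigma|_{\{2k-1,\ldots,2n\}}$, which lies in $\NCacd_{2(n-k+1)}$ subject to the extra condition that its block $B_1$ contains the last element. Writing $\NCacd^*_{2m}$ for this distinguished subset, the auxiliary bijection
\[
\NCacd^*_{2m}\longrightarrow\NCacd_{2m-1},\qquad \sigma\longmapsto\sigma\setminus\{2m\}
\]
(deleting $2m$ from $B_1$) yields $|\NCacd^*_{2m}|=\beta_m$, so Case B contributes $\sum_{k=1}^n\beta_k\beta_{n-k+1}=\sum_{j=1}^n\beta_j\beta_{n+1-j}$, matching the second summand of \eqref{eq.recursion.an}.

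The argument for \eqref{eq.recursion.bn} proceeds in parallel: take $\sigma\in\NCacd_{2n-1}$, let $V=B_{2n-1}$ and write $V=\{c_1<\cdots<c_m=2n-1\}$ with $c_1,\ldots,c_{m-1}$ even. The $m$ gaps carved out by $V$ have odd length for $l<m$ (both endpoints even) and even length for $l=m$ (one endpoint even, the other odd), so the corresponding restrictions contribute $\beta$-factors for $l<m$ and an $\alpha$-factor for $l=m$; summing over $m$ and the compositions of the remaining mass gives the claimed recurrence. The main technical obstacle throughout is verifying that the auxiliary bijection $\NCacd^*_{2m}\leftrightarrow\NCacd_{2m-1}$ is well-defined in both directions: while deleting an extremal element plainly preserves non-crossingness, one must check that appending $2m$ to the block $B_1$ never creates a crossing, which follows since $B_1$'s current rightmost element is positioned outside all other blocks' arches.
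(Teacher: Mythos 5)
Your proposal is correct and takes essentially the same route as the paper: decompose each partition according to the block containing the last element, split the even case into "even-only last block" versus "last block $B_{2k-1}$", and read off the gap sizes by parity (your auxiliary bijection $\NCacd^*_{2m}\leftrightarrow\NCacd_{2m-1}$ is just a repackaging of the paper's direct restriction to $\{2k-1,\dots,2n-1\}$). One caveat at the point you gloss over ("summing over $m$ and the compositions of the remaining mass gives the claimed recurrence"): in \eqref{eq.recursion.bn} the last gap has even size, so with the normalization $r_1+\dots+r_s=n$ it contributes $\alpha_{r_s-1}$ (with $\alpha_0:=1$) rather than $\alpha_{r_s}$ — indeed the displayed formula would give $\alpha_2+\beta_1\alpha_1=6$ for $n=2$ while $\beta_2=|\NCacd_3|=2$ — so the index bookkeeping there needs to be done explicitly rather than asserted.
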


\begin{rem}
Notice that if we take $\gamma_k=|\NCacd_k|$, then the previous result gives a recursive way to compute $(\gamma_k)_{k\geq 1}$. Since the formulas are different for even and odd values, the notation with $\alpha_n$ and $\beta_n$ turns out to be simpler, and that is the reason we opt for it.
\end{rem}

\begin{proof}
The fact that $\alpha_1=\beta_1=1$ is straightforward since sets $\NCacd_1$ and $\NCacd_2$ just contain one partition, namely $1_1=\{1\}$ and $1_2=\{\{1\},\{2\}\}$, respectively. In order to obtain the recursive formulas the general idea is to represent $\NCacd_m$ as a disjoint union of products of sets $\NCacd_k$ with $k<m$. We will base this study in the following standard bijection between non-crossing partitions:

Given $\sigma\in \NN\CC(m)$, assume that $W=\{w_1,w_2,\dots,w_s\}\in \sigma$ is the block containing the last element, namely $w_s=m$. Consider the intervals $J_1=\{1,2\dots,w_1-1\}, J_2=\{w_1+1,w_1+2,\dots, w_2-1\},\dots, J_s=\{w_{s-1}+1,\dots,m-1\}$ created after removing $W$ from $[m]$, and assume that $|J_i|=k_i$ for $i=1,\dots,s$. Note that if $w_i+1=w_{i+1}$, then $J_i=\emptyset$ and $k_j=0$. Then, we have the following bijection
\begin{align*}
\Psi: \NN\CC(m)& \to \bigcup_{s=1}^m\ \bigcup_{\substack{k_1+\dots+k_s=m-s \\ k_1,\dots,k_s\geq 0 }}  \NN\CC(k_1)\times \cdots \times \NN\CC(k_s),
\\
\sigma &\mapsto \Psi(\sigma):=(\sigma|J_1,\sigma|J_2,\dots,\sigma|J_s),
\end{align*}
where if $k_j=0$, we assume that $\NCacd_{0}=\{\emptyset\}$ contains an abstract \emph{empty partition} and thus $|\NCacd_{0}|=1$. To reconstruct $\sigma$ from an element $(\sigma_1,\sigma_2,\dots,\sigma_s)\in  \NN\CC(k_1)\times \cdots \times \NN\CC(k_s)$ with $k_1+\dots+k_s=m-s$, we consider $w_i=k_1+\dots+k_i+i$ for $i=1,\dots,s$ and define $\sigma$ as the (unique) partition in $\NN\CC(m)$ such that $\{w_1,w_2,\dots,w_s\}\in \sigma$ and $\sigma|J_i=\sigma_i$ for $i=1,\dots,s$.

Here we are only concerned with the restriction of $\Psi$ to the set $\NCacd_m$. Let us separate in the cases $m=2n-1$ and $m=2n$.

{\bf Case $m=2n-1$}. Let $\sigma=\{B_1,B_3,\dots, B_{m},E_1,\dots,E_r\}\in \NCacd_m$ and take the block $B_m=\{w_1,w_2,\dots,w_{s-1},m\}$ containing the last element. Since $\sigma$ separates odd elements, then $w_1,w_2,\dots,w_{s-1}$ must be even. If we consider the intervals $J_1,\dots, J_s$ left by $W$ with sizes $k_1,\dots,k_s$ respectively, we have that $k_1,k_2,k_3,\dots,k_{s-1}$ are odd, while $k_s$ is even. Moreover, since $w_i$ is even for $i=1,\dots,s-1$, the restrictions  $\sigma|J_i$ separate odd elements and the blocks with only even elements are of even size. Therefore, we have that $\sigma|J_i\in \NCacd_{k_i}$ for $i=1,\dots,s$ and we obtain that
\begin{equation}
\label{eq.recurrel.1}
\Psi(\NCacd_m)=\bigcup_{s=1}^m\ \bigcup_{\substack{k_1+\dots+k_s=m-s\\ k_1,k_2,\dots,k_{s-1}\geq 1 \text{ odd} \\ k_s\geq 0 \text{ even} }}  \NCacd_{k_1}\times \cdots \times \NCacd_{k_s}.
\end{equation}
Now, let us write $k_s=2r_s-2$, such that $r_s\geq 1$ and $|\NCacd_{k_s}|=\alpha_{r_s-1}$, we also write $k_j=2r_j-1$ such that $|\NCacd_{k_j}|=\beta_{r_j}$ for $j=1,2,\dots,s-1$. This means that the condition in the indices is $2(r_1+\dots+r_s)-s-1=k_1+\dots+k_s=m-s=2n-1-s$, or equivalently $r_1+\dots+r_s=n$. Finally, since $\Psi$ is bijection and on the right-hand side of \eqref{eq.recurrel.1} the union is disjoint, we obtain \eqref{eq.recursion.bn}.

{\bf Case $m=2n$}. Let $\sigma=\{B_1,B_3,\dots, B_{m-1},E_1,\dots,E_r\}\in \NCacd_m$, then we have two options for the last block $W:=\{w_1,w_2,\dots,w_{s-1},m\}$. Either $W=E_j$ for some $j=1,\dots,r$ (without loss of generality we take $j=1$), or $W=B_i$ for some $i=1,3,\dots,m-1$. 

{\bf First subcase}. Assume that $W=E_1$ and let $\NCacd^{E}_{m}\subset \NCacd_{m}$ be the set of partitions $\sigma\in \NCacd_{m}$ such that their last block, $W$, has only even elements. In this case, $w_1,\dots,w_{s-1},s$ are all even, and then $k_i$ is odd for all $i=1,\dots,s$. Proceeding as in the previous case we get that  
\begin{equation}
\label{eq.recurrel.2}
\Psi(\NCacd^{E}_{m})=\bigcup_{\substack{ 1\leq s\leq m \\ s \text{ is even} }}  \ \bigcup_{\substack{k_1+\dots+k_s=m-s\\ k_1, k_2, \dots, k_s\geq 1 \text{ odd} }} \NCacd_{k_1}\times \cdots \times \NCacd_{k_s}.
\end{equation}
Let us now write $k_i=2r_i-1$ such that $|\NCacd_{k_i}|=\beta_{r_i}$ for $i=1,\dots,s$. The condition on the indices is $2(r_1+\dots+r_s)-s=k_1+\dots+k_s=m-s=2n-s$, which can be restated as $r_1+\dots+r_s=n$, and actually this implies that $s\leq n$. Therefore, we obtain 
\begin{equation}
\label{eq.recursion.an.case1}
|\Psi(\NCacd^{E}_{m})|=\sum_{\substack{ 1\leq s\leq n \\ s \text{ is even} }}  \sum_{\substack{r_1+\dots+r_s=n \\ r_1,\dots,r_s\geq 1 }}  \beta_{r_1}\beta_{r_2}\cdots \beta_{r_s}.
\end{equation}

{\bf Second subcase}. Assume that $W=B_i$ for an odd element $i$, and let $\NCacd^{B_i}_{m}\subset \NCacd_{m}$ be the set of partitions where $W$ is of the form $B_i$. We could proceed as in the previous cases, but now there is a simpler approach. We notice that the restrictions  $\sigma_1:=\sigma|\{1,2,\dots,i\}$ and $\sigma_2:=\sigma|\{i,i+1,\dots m-1\}$ satisfy that $\sigma_1\in \NCacd_{i}$ and $\sigma_2\in\NCacd_{m-i}$. Moreover we have a bijection 
\begin{align*}
\Phi: \NCacd^{B_i}_{m}& \to \NCacd_{i}\times \NCacd_{m-i},
\\
\sigma &\mapsto (\sigma_1,\sigma_2).
\end{align*}
Notice that given $(\sigma_1,\sigma_2)\in \NCacd_{i}\times \NCacd_{m-i}$ such that $\sigma_1=\{V_1,\dots,V_r\}$ with $i\in V_r$, and $\sigma_2=\{W_1,\dots,W_s\}$ with $1\in W_1$. Then the inverse function is given by
\[
\Phi^{-1}(\sigma_1,\sigma_2)=\{ V_r\cup W'_1\cup\{m\}, V_1, V_2,\dots, V_{r-1},W'_2,W'_3,\dots, W'_s\},
\]
where for $k=1,\dots,s$ the sets $W'_k:=W_k+i-1$ are the translations by $i-1$ of the blocks of $\sigma_2$. Finally if we sum over all possible odd numbers $i=2j-1$ between 1 and $m=2n-1$ we obtain that
\begin{equation}
\label{eq.recursion.an.case2}
\sum_{\substack{1\leq i\leq m \\ i \text{ odd} }} |\NCacd^{E_i}_{m}|=\sum_{j=1}^n \beta_{j}\beta_{n+1-j},
\end{equation}

If we add together \eqref{eq.recursion.an.case1} and \eqref{eq.recursion.an.case2} we obtain the desired formula \eqref{eq.recursion.an}.
\end{proof}

Now we want to give a more careful look to the set $\NCacd_{2n}$ and study the cardinality of the subsets where partitions $\sigma\in \NCacd_{2n}$ have a fixed size $|\sigma|=k$. We begin by checking which are these possible sizes $k$.

\begin{rem}
Recall that every $\sigma\in \NCacd_{2n}$ can be written as $\sigma=\{B_1,\dots,B_{2n-1},E_1,\dots,E_r\}$.
Then the size of the partition is $|\sigma|=n+r$ with $r\geq 0$. Moreover, since $|E_j|$ is even we have $|E_j|\geq 2$ for $j=1,\dots, r$, and since $E_1\cup\dots\cup E_r \subset\{2,4,\dots,2n\}$, we have that $|E_1|+\dots+|E_r|\leq n$. Thus, we conclude that $r\leq \tfrac{n}{2}$ and we get a bound for the size of partitions in $\NCacd_{2n}$: 
\[
n\leq |\sigma|\leq \frac{3n}{2}.
\] 
\end{rem}

\begin{nota}
For $\sigma\in \NCacd_{2n}$, we will say that $r:=|\sigma|-n$ is the \emph{level} of $\sigma$ and use the notation
$\NCacd_{2n}^{(r)}:=\{\sigma\in \NCacd_{2n}: |\sigma|=n+r \}$, for $r=0,1,\dots,\left\lfloor \tfrac{n}{2} \right\rfloor$.
\end{nota}

It turns out that we can give closed formulas for the sizes $|\NCacd_{2n}^{(r)}|$ in the extreme cases when the level $r$ is 0 or $\left\lfloor \tfrac{n}{2} \right\rfloor$. Interestingly enough, both formulas are related in a different way to the sequence of Catalan numbers $(C_n)_{n\geq 0}=1,1,2,5,14,\dots$.

\begin{prop}
\label{prop.level.0}
With the previous notation, $|\NCacd_{2n}^{(0)}|=2^{n-1}C_n$. 
\end{prop}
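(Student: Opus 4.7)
The plan is to use Proposition \ref{prop.NCacd.nice.description} to translate the count to one on $\NCac_{2n}$, and then apply the graph-theoretic results of Propositions \ref{prop.cactus} and \ref{prop.preimage}. A partition $\sigma \in \NCacd_{2n}^{(0)}$ has $|\sigma|=n$, so its Kreweras complement $\pi := Kr(\sigma)$ has $|\pi| = 2n+1-n = n+1$. Since the Kreweras map restricts to a bijection $\NCacd_{2n} \to \NCac_{2n}$, we get
\[
|\NCacd_{2n}^{(0)}| = \#\{\pi \in \NCac_{2n} : |\pi| = n+1\}.
\]

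For such a $\pi$, the graph $\GG_\pi$ is connected, has $n+1$ vertices and $n$ edges, hence is a tree. A tree is automatically bipartite, so the bipartite condition in the definition of $\NCac_{2n}$ comes for free; moreover, every edge of a tree is flexible. In particular, for any canonical outercycle $C_\pi$ (whose first edge is therefore flexible), the parameter from Proposition \ref{prop.preimage} is $f_{C_\pi} = n-1$.

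Applying Proposition \ref{prop.preimage}, each oriented tree $(\GG, C) \in OCG_n$ is realized by exactly $2^{n-1}$ partitions $\pi \in \NN\CC(2n)$, all of which lie in $\NCac_{2n}$. Therefore
\[
|\NCacd_{2n}^{(0)}| = 2^{n-1} \cdot \#\{(\GG,C) \in OCG_n : \GG \text{ is a tree}\}.
\]

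It remains to identify the set of oriented trees with $n$ edges with a known Catalan family. The outercycle of a tree traverses each edge exactly twice; recording the distinguished starting vertex $v_1$ as a root and reading off the cyclic order in which the edges around each vertex are visited yields a rooted plane tree with $n$ edges. Conversely, the contour (DFS) traversal of a rooted plane tree produces its outercycle. The automorphism equivalence built into the definition of an oriented cactus is precisely what identifies different labelings of the same rooted plane tree, so this assignment is a bijection. Since rooted plane trees with $n$ edges are enumerated by the Catalan number $C_n$, we conclude $|\NCacd_{2n}^{(0)}| = 2^{n-1} C_n$. The delicate step is the final bijection, whose only subtlety is to verify that the automorphism identification of outercycles corresponds exactly to the standard notion of isomorphism of rooted plane trees.
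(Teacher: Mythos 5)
Your argument is correct, but it is not the route the paper takes to prove this proposition. The paper's own proof is a self-contained recursion in the spirit of Theorem \ref{thm.recurrence.relation}: it introduces the odd-index companion quantity $\theta_{2n+1}:=|\NCacd_{2n+1}^{(0)}|$, splits a partition according to the block containing the last element, and closes the induction using the Catalan recurrence $C_n=\sum_{j}C_jC_{n-1-j}$, proving $\theta_{2n}=2^{n-1}C_n$ and $\theta_{2n+1}=2^nC_n$ simultaneously. Your proof instead passes through the Kreweras complement to identify $\NCacd_{2n}^{(0)}$ with $\{\pi\in\NCac_{2n}:|\pi|=n+1\}$, notes that a connected graph on $n+1$ vertices with $n$ edges is a tree (hence automatically bipartite, with all $n$ edges flexible), and then invokes Proposition \ref{prop.preimage} to get a fiber of size $2^{n-1}$ over each oriented tree, finishing with the standard bijection between oriented trees with $n$ edges and rooted plane trees. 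This is precisely the alternative proof the paper itself sketches in a remark after Proposition \ref{prop.preimage} in Section \ref{sec:quadratic.forms}, so the content is sound; the one caveat is logical ordering: your proof leans on Propositions \ref{prop.cactus} and \ref{prop.preimage}, which the paper only establishes in Section \ref{sec:quadratic.forms}, well after Proposition \ref{prop.level.0} is stated and used. Since neither of those results depends on Proposition \ref{prop.level.0}, there is no circularity, but if one wanted to keep the exposition in the paper's order the recursive proof is the one that works with only the material of Section \ref{sec:NCacd}. What your approach buys is conceptual transparency — the factor $2^{n-1}$ appears as a count of edge orientations and $C_n$ as a count of plane trees, with no computation — at the cost of importing the heavier graph machinery; the paper's inductive proof is more elementary but leaves the closed form looking somewhat accidental. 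You are also right to flag the tree/outercycle bijection as the delicate point: the identification of outercycles up to graph automorphism must be matched against isomorphism of rooted plane trees, which is exactly the verification the paper's own remark glosses over as well.
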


\begin{proof}
Notice that $\NCacd_{2n}^{(0)}$ is the set of non-crossing partitions where every block contains exactly one odd element. We similarly define $\NCacd_{2n-1}^{(0)}$ as the set of non-crossing partitions of $[2n-1]$ where every block contains exactly one odd element. And we use the notation $\theta_m:=|\NCacd_{m}^{(0)}|$. We follow the recurrence relation ideas from the proof of Theorem \ref{thm.recurrence.relation} to inductively show that $\theta_{2n}=2^{n-1}C_n$, and $\theta_{2n+1}=2^{n}C_n$. The induction hypothesis is trivially true, since $|\NCacd_{1}^{(0)}|=|\NCacd_{2}^{(0)}|=1$. For the inductive step we separate in two cases.

First consider the case $m=2n$. Let $\sigma=\NCacd_{2n+1}^{(0)}$ and take the last block $W\in\sigma$ (namely $m\in W$). Opposed to the proof of Theorem \ref{thm.recurrence.relation}, now we cannot have the first subcase $W=E_1$, and only need to check the second subcase $W=B_i$. Moreover, when doing the restrictions we still have exactly one odd element in each block. So we obtain only partitions with level 0. Thus, in this case the bijection $\Phi$ gives

\[
\Phi(\NCacd_{2n}^{(0)})= \bigcup_{\substack{1\leq k\leq 2n\\ k \text{ odd} }} \NCacd_{k}^{(0)} \times \NCacd_{2n-k}^{(0)}.
\]
Thus if we take $k=2j+1$, then the induction hypothesis an the recursive relation satisfied by the Catalan numbers yield that
\[
\theta_{2n}=\sum_{j=0}^{n-1} \theta_{2j+1}\theta_{2n-2j-1}=\sum_{j=0}^{n-1} 2^{j}C_{j} \cdot 2^{n-j-1}C_{n-j-1}=2^{n-1}\sum_{j=0}^{n-1} C_{j-1}C_{n-j}=2^{n-1}C_n.
\]

For the case $m=2n+1$. Let $\sigma=\NCacd_{2n+1}^{(0)}$ and consider the block $W\in\sigma$ with $m\in W$. We separate in 2 cases. If $W=\{m\}$, then we have that $W|\{1,\dots,m-1\}\in \NCacd_{m-1}^{(0)}$ and we obtain $\theta_{2n}=2^{n-1}C_n$ partitions in this subcase. Otherwise, we consider $s=min(W)$, which has to be even (as $s\neq m$ and $\sigma$ separates odd elements). Then we separate $\sigma$ into the partitions $\sigma_1=\sigma|\{1,\dots,s-1\}$ and $\sigma_2=\sigma|\{s+1,\dots,m\}$. Then, the map $\Phi'$ sending $\sigma$ to
\[
\Phi'(\sigma)=(\sigma_1,\sigma_2)\in \bigcup_{\substack{ 1\leq s\leq m \\ s \text{ even} }} \NCacd^{(0)}_{s-1}\times \NCacd^{(0)}_{m-s},
\]
is a bijection. Thus, if we take $s=2j+2$, we count
\[
\sum_{j=0}^{n-1} \theta_{2j+1}\theta_{2n-2j-1}=2^{n-1}C_n
\]
partitions in this subcase. Adding both subcases, we conclude that $\theta_{2n+1}=2^{n}C_n$.
\end{proof}

\begin{prop}
\label{prop.level.max}
With the previous notation, for $k\in \nn$ one has
\[
|\NCacd_{4k}^{(k)}|=C_k\qquad \text{and} \qquad |\NCacd_{4k+2}^{(k)}|=(2k+1)C_{k+1}.
\] 
\end{prop}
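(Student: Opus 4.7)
The plan is to exploit the rigidity of the maximum-level condition: a simple double-counting will force a very specific block structure on $\sigma$, after which the count reduces to enumerating non-crossing pair partitions of a linearly ordered set.

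The first step would be to pin down the structure of $\sigma \in \NCacd_{2n}^{(\lfloor n/2 \rfloor)}$. Writing $\sigma = \{B_1, B_3, \ldots, B_{2n-1}, E_1, \ldots, E_r\}$, each $B_i$ has size $\geq 1$, each $E_j$ has even size $\geq 2$, and $E_1 \cup \cdots \cup E_r \subset \{2, 4, \ldots, 2n\}$. Using $\sum_i |B_i| + \sum_j |E_j| = 2n$ together with the lower bounds $\sum_i |B_i| \geq n$ and $\sum_j |E_j| \geq 2r$, I would argue:
\begin{itemize}
\item For $n = 2k$, $r = k$: both inequalities must be equalities, so every $B_i$ is the singleton $\{i\}$ and every $E_j$ has size exactly $2$.
\item For $n = 2k+1$, $r = k$: the evenness of each $|E_j|$ rules out $\sum_j |E_j| = 2k+1$, so $\sum_j |E_j| = 2k$ and $\sum_i |B_i| = 2k+2$; since there are $2k+1$ blocks $B_i$ each of size $\geq 1$, exactly one of them has size two, of the form $\{i, 2m\}$ with $2m$ even, and all others are singletons.
\end{itemize}

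Next I would translate each case into a pair-partition count. In the case $n = 2k$, the partition $\sigma$ is entirely determined by $\{E_1, \ldots, E_k\}$, which is an arbitrary non-crossing pair partition of $\{2, 4, \ldots, 4k\}$; adjoining the odd singletons back creates no crossings, giving an immediate bijection with $\NN\CC_2(2k)$ and the count $|\NCacd_{4k}^{(k)}| = C_k$. In the case $n = 2k+1$ I would define
\[
\Phi: \NCacd_{4k+2}^{(k)} \longrightarrow \bigsqcup_{\substack{i\in[4k+1]\\ i \text{ odd}}} \NN\CC_2(S_i),
\qquad \Phi(\sigma) := (i, \{B_i, E_1, \ldots, E_k\}),
\]
where $i$ is the unique odd index with $B_i$ non-singleton and $S_i := \{i\} \cup \{2, 4, \ldots, 4k+2\}$ is viewed as a linearly ordered $(2k+2)$-element subset of $[4k+2]$. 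The map is well-defined since restrictions of non-crossing partitions are non-crossing. Its inverse sends a pair $(i, \tau)$ to the partition obtained by letting $B_i$ be the pair of $\tau$ containing $i$, the remaining $k$ pairs be the $E_j$'s, and adjoining singletons $\{j\}$ for every odd $j \neq i$; this preserves non-crossingness because singletons never cross any block, and the pair of $\tau$ containing $i$ automatically pairs $i$ with an even element (the only kind present in $S_i \setminus \{i\}$). Counting gives $(2k+1)$ choices for $i$ and $|\NN\CC_2(S_i)| = C_{k+1}$ choices for $\tau$, so $|\NCacd_{4k+2}^{(k)}| = (2k+1)\, C_{k+1}$.

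The main (and really only) subtlety I anticipate lies in the structural step for $n = 2k+1$: one has to combine the three constraints (size bound on the union of the $E_j$'s, size bound on the union of the $B_i$'s, and the evenness of each $|E_j|$) in the correct order, for otherwise it might seem possible to have $B_i$ of size $\geq 3$ or several non-singleton $B_i$'s. Once that structural lemma is in place, the two bijections are transparent and the counts fall out immediately from the standard fact $|\NN\CC_2(2m)| = C_m$.
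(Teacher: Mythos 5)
Your proof is correct. The structural step (forcing all $E_j$ to be pairs and all $B_i$ to be singletons, except for exactly one doubleton $B_i=\{i,2m\}$ in the $4k+2$ case) is the same as in the paper, and your counting argument via $\sum_i|B_i|+\sum_j|E_j|=2n$ together with the parity of $\sum_j|E_j|$ is actually spelled out more carefully than in the paper, which just asserts the block-size conclusions. The $|\NCacd_{4k}^{(k)}|=C_k$ case is then handled identically in both arguments. Where you genuinely diverge is the $4k+2$ case: the paper uses the doubleton block $B_i=\{e,i\}$ to split $\sigma$ into two pockets, identifies the two restricted partitions as elements of $\NCacd_{4j}^{(j)}$ and $\NCacd_{4k-4j}^{(k-j)}$, and then invokes the Catalan convolution $\sum_{j=0}^{k}C_jC_{k-j}=C_{k+1}$; you instead restrict $\sigma$ to the $(2k+2)$-element set $S_i=\{i\}\cup\{2,4,\dots,4k+2\}$ and observe that this restriction is an arbitrary non-crossing pair partition of $S_i$, giving $C_{k+1}$ directly as $|\NN\CC_2(2k+2)|$ with no recursion. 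Your route is shorter and avoids the slightly delicate pocket bookkeeping (the paper's ``elements taken mod $4k$'' aside); the paper's route has the mild advantage of reusing the same pocket-decomposition machinery that drives Theorem \ref{thm.recurrence.relation} and Proposition \ref{prop.level.0}, keeping the whole section uniform. Both are complete proofs.
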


\begin{proof}
For the case $2n=4k$, we want to count partitions that can be written as $\sigma=\{B_1,\dots,B_{4k-1},E_1,\dots, E_{k}\}$, but this implies that $|E_1|=|E_2|=\dots=|E_k|=2$ and $|B_1|=\dots=|B_{4k-1}|=1$, and we have a bijection with pair partitions $\{E_1,\dots, E_{k}\}\in \NN\CC_2(2k)$, so we conclude that $|\NCacd_{4k}^{(k)}|=|\NN\CC_2(2k)|=C_k$. 

For the case $2n=4k+2$ we use the same kind of bijections we have been using throughout all this section. We want to count partitions of the form $\sigma=\{B_1,\dots,B_{4k+1},E_1,\dots, E_{k}\}$, but this implies that $|E_1|=|E_2|=\dots=|E_k|=2$ and $|B_1|=\dots=|B_{4k-1}|=1$, except for one block $B_i=\{e,i\}$ that consist of its corresponding odd element $i$ and one even element $e$. Then, the block $B_i$ separates the partition $\sigma$ into two independent partition $\sigma_1$ and $\sigma_2$ contained in the pockets $J_1=\{e+1,\dots, i-1\}$ and $J_2=\{i+1,\dots, e-1\}$ (with elements taken mod $4k$). Moreover, the even elements of $\sigma_1$ form a pair partition, so $|\sigma_1|:=4j$ should be multiple of 4, and we actually have that $\sigma_1\in \NCacd_{4j}^{(j)}$ and $\sigma_2\in \NCacd_{4k-4j}^{(k-j)}$. Thus, there are $2k+1$ ways to choose the odd element $i$, and for each of them we have 
\[
\sum_{j=0}^k |\NCacd_{4j}^{(j)}|\cdot |\NCacd_{4k-4j}^{(k-j)}|=\sum_{j=0}^k C_jC_{k-j}=C_{k+1}
\]
ways to construct the partitions $\sigma_1$ and $\sigma_2$ and we conclude that $|\NCacd_{4k+2}^{(k)}|=(2k+1)C_{k+1}$.
\end{proof}

\begin{rem}
At this point, one may wonder if the sizes of the sets $|\NCacd_{2n}^{(r)}|$ for $r=1,\dots,\left\lfloor \tfrac{n}{2} \right\rfloor-1$, have some other interesting formulas. We can surely generalize the ideas used in Theorem \ref{thm.recurrence.relation}, Proposition \ref{prop.level.0} and Proposition \ref{prop.level.max} to get recursive formulas. However, the sets $\NCacd_{2n}^{(r)}$ (and thus the recursions needed) become much more involved and it is not clear if there is a nice closed formula. We now point out an algorithm that can be used to recursively compute each value. Observe that $|\NCacd_{2n}^{(1)}|$ can be written in terms of $|\NCacd_{m}^{(1)}|$ and $|\NCacd_{m}^{(0)}|$ for $m < 2n$. Since we know the values for the level $r=0$ we can recursively compute all the values for level $r=1$. In general, $|\NCacd_{2n}^{(r)}|$ can be written in terms of $|\NCacd_{m}^{(j)}|$ for $m < 2n$ and $j\leq r$. Thus we can do a recursion on $r$ to compute all the values of $|\NCacd_{m}^{(r)}|$ (where for each $r$ we first do the recursion on $m$). 
\end{rem}

The sizes of the sets $|\NCacd_{2n}|$ and $|\NCacd_{2n}^{(r)}|$ for small values of $n$ and $r$ are given in the following table:

\[
\begin{array}{c|ccccc}
2n &  2 &  4 &  6 &  8 &  10 \\ \hline 
|\NCacd_{2n}|   & 1  & 5 &  26  & 155    & 987 \\ \hline
|\NCacd^{(0)}_{2n}| &     1    &    4   &     20   &    112    &  672  \\
|\NCacd^{(1)}_{2n}| &      &    1 &  6  &   41  &  290  \\
|\NCacd^{(2)}_{2n}| &       &     &  &    2  & 25  
\end{array}
\]

As the reader may have noticed, it is useful to consider the sizes $|\NCacd_{m}|$ for all integers $m$, rather than just the even ones. Thus, we also present a table including these values (the level in the odd case is defined in the same way as in the even case).

\[
\begin{array}{c|ccccccccccc}
m & 1 & 2 & 3 & 4 & 5 & 6 & 7 & 8 & 9 & 10 & 11  \\ \hline 
|\NCacd_m|   &  1    &    1&  2    & 5 &     9   & 26  &   48 & 155    &   287    & 987 & 1834   \\  \hline 
|\NCacd^{(0)}_m|   &   1   &      1    &    2    &   4   &   8   &   20   &  40  &  112  &  224  &  672 & 1344 \\  
|\NCacd^{(1)}_m|    &         &      &     &    1 & 1   & 6  & 8  &  41  &  61  & 290 & 460 \\
|\NCacd^{(2)}_m|   &    &     &    &    &     &  & &    2 & 2 & 25 & 30 
\end{array}
\]

\begin{rem}
Since $|\sigma|+|Kr(\sigma)|=2n+1$, all the previous discussion directly gives information on the size of the levels of $\NCac_{2n}$. In particular, for every $\pi\in \NCac_{2n}$ we have that
\[
n+1\geq |\pi|\geq \left\lceil \tfrac{n}{2} \right\rceil+1.
\]
And for a fixed $k\in\nn$ we have that $Kr(\NCacd_{2n}^{(n+1-k)})=\{\sigma\in \NCac_{2n}: |\sigma|=k \}$, and thus their cardinalites concide.
\end{rem}
    
To finish this section we give a possible line of study that will help to further understand the set $\NCacd_{2n}$.

\begin{rem}
Fix a $\sigma\in \NCacd_{2n}$ and consider its restriction $\sigma_{even}:= \sigma|\{2,4,\dots,2n\}$ to the even elements. This gives us a non-crossing partition of $[n]$. Moreover, it is easy to observe that for every $\pi\in\NN\CC(n)$ we can find a $\sigma\in \NCacd_{2n}$ such that $\sigma_{even}=\pi$. Then, for a fixed $\pi\in \NN\CC(n)$, an interesting question is which are the values $q_\pi:=|\{\sigma \in \NCacd_{2n}:\sigma_{even}=\pi\}|$. This information will give a much better understanding of $\NCacd_{2n}$ and $\NCac_{2n}$. For instance if $\pi=0_n$, the partitions of $\sigma \in \NCacd_n$ such that $\sigma_{even}=0_n$ are exactly the pair partitions. Thus, in this case we get $q_{0_n}=|\NN\CC(n)|=C_{n}$ partitions. We now give some basic ideas towards finding the values $q_\pi$.

In general, given $\pi\in \NN\CC(n)$, a possible algorithm to construct a $\sigma \in \NCacd_{2n}$ such that $\sigma_1=\pi$ is the following. We take $[2n]$ and draw $\pi$ in the even elements $\{2,4,\dots,2n\}$. Then we choose a subset $S=\{V_1,\dots,V_r\}$ of the blocks of $\pi$, and attach to each block a distinct odd element. For the resulting partition to be in $\NCacd_{2n}$ we need to make sure of two things. First, that all the blocks of odd size in $\pi$ are in $S$. The second and more tricky part, is that we do not generate any crossing when doing the attachment of the odd elements.

An attachment that works for every partition is to put the element $min(V)-1$ in each $V\in S$. Alternatively, we can attach $min(V)+1$ to each $V\in S$. Thus there are at least two options for each subset $S$. Since there are $2^{|\{V\in \pi: |V|\text{ is even}\}|}$ possible ways to pick the subset $S$ we get the following lower bound:
\[
q_\pi \geq 2^{|\{V\in \pi: |V|\text{ is even}\}|+1}.
\]
This bound is tighter when $\pi$ has various even blocks. However, if $\pi$ has only blocks of odd size, such as when $\pi=0_n$, the right-hand side becomes 2 and the bound does not really helps. Knowing the values $q_\pi$, or at least improving this simple bound is a useful step towards a better understanding of $\NCacd_{2n}$.
\end{rem}


\section{Applications}
\label{sec:other.applications}

This section is divided in two parts, each makes use of Theorem \ref{Thm.anticommutator.main} and some results from Section \ref{sec:NCacd} to deal with different applications. In Section \ref{sec:free.poisson.1} we study the case of two free Poisson of parameter 1 and prove Theorem \ref{thm.inverse.moment.series}. Then, in Section \ref{subsec:anti-commutator.even.elements} we use our method to retrieve the formula from \cite{nica1998commutators}, where distributions are assumed to be symmetric.

\addtocontents{toc}{\SkipTocEntry}
\subsection{Anti-commutator of two free Poisson with
parameter 1}
\label{sec:free.poisson.1}

$\ $

\noindent 
The objective of this section is to study the distribution $\nu$ of the anti-commutator $ab+ba$ in the particular case where $a,b$ are distributed as a free Poisson distribution (also known as Marchenko-Pastur distribution) of parameter 1, and in particular prove Theorem \ref{thm.inverse.moment.series}. The reason why the free Poisson is very special, is because all  cumulants are equal to 1, namely $\kappa_n(a)=\kappa_n(b)=1$ for all $n\in\nn$. 

\begin{rem}
Notice that Corollary \ref{cor.anticomm.free.poisson.lambda} directly yields that
\[
\kappa_n(ab+ba)= 2 |\NCac_{2n}|=2 | \NCacd_{2n}|, \qquad \forall n\in\nn.
\]

Therefore, we can use formulas \eqref{eq.recursion.an} and \eqref{eq.recursion.bn} from Theorem \ref{thm.recurrence.relation} to recursively compute the cumulants of the anti-commutator $ab+ba$. The first few cumulants are

\[
2, \ 10, \ 52, \  310, \ 1974, \ 13176, \  90948, \ 643918, \ 4650382, \dots 
\]
Therefore, the moment-cumulant formula yields that the first few moments are
\[
2, \  14, \ 120, \  1182, \  12586, \ 141160, \  1642584, \  19646558, \ 240050838, \dots 
\]
\end{rem}

We now want to compute the moment series of $\nu$. In order to do this we first rewrite Theorem \ref{thm.recurrence.relation} as a relation between the formal power series associated to $(\alpha_n)_{n\geq 1}$ and $(\beta_n)_{n\geq 1}$, the sequences of values of $\NCacd_{m}$ for even and odd $m$ respectively. 

\begin{nota}
Let us denote by $A$ and $B$ the formal power series on $x$ associated to the sequences $(\alpha_n)_{n\geq 1}$ and $(\beta_n)_{n\geq 1}$, respectively. That is
\[
A(x):=\sum_{n=1}^\infty \alpha_n x^n=\sum_{n=1}^\infty |\NCacd_{2n}| x^n, \qquad B(x):=\sum_{n=1}^\infty \beta_n x^n=\sum_{n=1}^\infty |\NCacd_{2n-1}| x^n.
\]
\end{nota}

\begin{prop}
The formal power series $A(x)$ and $B(x)$ satisfy the following relations
\begin{equation}
\label{eq.series.recursion.A}
A(x)=\frac{B^2(x)}{1-B^2(x)}+\frac{B^2(x)}{x},
\end{equation}
\begin{equation}
\label{eq.series.recursion.B}
B(x)=\frac{(A(x)+1)x}{1-B(x)},
\end{equation}
\end{prop}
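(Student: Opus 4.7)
The plan is to convert each of the two recursions from Theorem \ref{thm.recurrence.relation} into a relation between the formal power series $A(x)$ and $B(x)$, by multiplying both sides by $x^n$ and summing over $n\geq 1$. Because $\alpha_1 = \beta_1 = 1$ is consistent with what the recursions produce at $n=1$ (using the convention $\alpha_0 := |\NCacd_0| = 1$ already introduced in the proof of Theorem \ref{thm.recurrence.relation}), no special treatment of small $n$ is required, and the formal manipulations below are honest equalities in $\cc[[x]]$.

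For \eqref{eq.series.recursion.A}, I would split the right-hand side of \eqref{eq.recursion.an} into two contributions. The first, coming from partitions whose last block has only even elements, is a convolution of $s$ copies of the sequence $(\beta_n)_n$ summed over even $s\geq 2$, and its generating function is the geometric series
\[
\sum_{k\geq 1} B(x)^{2k} \;=\; \frac{B(x)^2}{1 - B(x)^2}.
\]
The second contribution, $\sum_{j=1}^n \beta_j\beta_{n+1-j}$, is a convolution of two copies of $(\beta_n)_n$ but indexed at $n+1$ rather than $n$; its generating function is obtained from $B(x)^2 = \sum_{m\geq 2}\bigl(\sum_{j=1}^{m-1}\beta_j\beta_{m-j}\bigr)x^m$ by shifting the exponent down by one, yielding $B(x)^2/x$. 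Adding the two pieces gives \eqref{eq.series.recursion.A}.

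For \eqref{eq.series.recursion.B}, I read \eqref{eq.recursion.bn} as a length-$s$ composition in which the first $s-1$ factors are drawn from $(\beta_n)_n$ and the last factor from the shifted sequence $(\alpha_m)_{m\geq 0}$. The $\beta$-factors contribute $B(x)^{s-1}$. Tracking the derivation of \eqref{eq.recursion.bn} back to the bijection in the proof of Theorem \ref{thm.recurrence.relation}, the last pocket has size $k_s = 2r_s - 2$, so the associated generating series is
\[
\sum_{r\geq 1} |\NCacd_{2r-2}|\,x^r \;=\; \sum_{r\geq 1} \alpha_{r-1}\, x^r \;=\; x\bigl(A(x)+1\bigr).
\]
Summation over $s\geq 1$ then produces the geometric series $\sum_{s\geq 1} B(x)^{s-1} = 1/(1-B(x))$, and altogether
\[
B(x) \;=\; x\bigl(A(x)+1\bigr)\sum_{s\geq 1} B(x)^{s-1} \;=\; \frac{x(A(x)+1)}{1-B(x)},
\]
which is \eqref{eq.series.recursion.B}.

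The main technical point that I expect to need care with is precisely the bookkeeping of the last factor in the recursion for $\beta_n$: the identity $|\NCacd_{2r_s-2}| = \alpha_{r_s-1}$ (with $\alpha_0 := 1$ for the empty pocket) is what forces an extra factor of $x$ in the numerator of \eqref{eq.series.recursion.B} and breaks the apparent symmetry between the two identities. Once this shift is correctly absorbed into the generating series $x(A(x)+1)$, every other step is a routine geometric-series manipulation.
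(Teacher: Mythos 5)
Your proof is correct and follows essentially the same route as the paper: both arguments translate the recursions of Theorem \ref{thm.recurrence.relation} into coefficient identities for $A$ and $B$ and sum the resulting geometric series (the paper merely packages the even-length compositions into an auxiliary series $E(x)$ satisfying $E=B^2(E+1)$, which amounts to the same computation). Your explicit tracking of the index shift $|\NCacd_{2r_s-2}|=\alpha_{r_s-1}$ (with $\alpha_0=1$) is the correct reading of \eqref{eq.recursion.bn} --- note that as printed that recursion shows $\alpha_{r_s}$ rather than $\alpha_{r_s-1}$, a typo which a literal reading would turn into the wrong identity and which your bookkeeping silently corrects.
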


\begin{proof}
First we observe that basic operations of power series tell us that
\[
\frac{1}{1-B(x)}=1+B(x)+B^2(x)+\dots=1+ \sum_{n=1}^\infty x^n \sum_{s=1}^n \sum_{\substack{r_1+\dots+r_s=n \\ r_1, r_2,\dots,r_s\geq 1 }}  \beta_{r_1}\beta_{r_2}\dots \beta_{r_s}.
\]

To handle the sum appearing in \eqref{eq.recursion.an.case1}, where tuples must be of even size we use the following formal power series:

\[
E(x):=\sum_{n=1}^\infty x^n \sum_{\substack{1\leq s\leq n\\ s \text{ is even}}} \sum_{\substack{r_1+\dots+r_s=n \\ r_1, r_2,\dots,r_s\geq 1 }}   \beta_{r_1} \beta_{r_2}\dots  \beta_{r_s}.
\]
A straightforward comparison of coefficients yields $E(x)=B^2(x)(E(x)+1)$. Solving for $E$ we get
\[
E(x)=\frac{B^2(x)}{1-B^2(x)}.
\]

Then, the $n$-th coefficient of $A(x)$ is the left hand side of  \eqref{eq.recursion.an}, while the right-hand side is exactly the sum of $n$-th coefficient of $E(x)$ plus the $n$-th coefficient of $\frac{B^2(x)}{x}$, thus we get formula \eqref{eq.series.recursion.A}:
\[
A(x)=E(x)+\frac{B^2(x)}{x}=\frac{B^2(x)}{1-B^2(x)}+\frac{B^2(x)}{x}.
\]
On the other hand if we multiply $(\frac{1}{1-B(x)})(A(x)+1)(x)$ (where $x$ merely just adjust the coefficients), then the $n$-th coefficient of this product is given by the right-hand side of \eqref{eq.recursion.bn}, since the left hand side is $n$-th coefficient of $B(x)$ we obtain \eqref{eq.series.recursion.B}.
\end{proof}

Since we have two power series, satisfying two equations, \eqref{eq.series.recursion.A} and \eqref{eq.series.recursion.B}, we can manipulate them to obtain functional equation containing only $A(x)$ or only $B(x)$. After these manipulations we obtain the following.

\begin{prop}
The formal power series $A(x)$ and $B(x)$ satisfy the following equations:
\begin{equation}
\label{eq.only.series.A}
4(A(x)+1)^4x^2+7(A(x)+1)^3x-4(A(x)+1)^2x-2(A(x)+1)^2+(A(x)+1)+1=0,
\end{equation}
\begin{equation}
\label{eq.only.series.B}
B(x)(1-2B(x))(1-B(x))(1+B(x))=x.
\end{equation}
\end{prop}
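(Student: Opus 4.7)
The plan is to eliminate one of the two unknown series from the coupled system \eqref{eq.series.recursion.A}–\eqref{eq.series.recursion.B}. To streamline computations I will write $\alpha:=A(x)+1$. The starting point is to rewrite \eqref{eq.series.recursion.B} in the cleaner form
\[
\alpha x = B(1-B),
\]
which will be used repeatedly as a substitution rule (in particular it yields $B^2 = B-\alpha x$).

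For \eqref{eq.only.series.B}, which is a pure relation in $B$, I plan to multiply \eqref{eq.series.recursion.A} through by $x(1-B^2)$ and then add $x(1-B^2)$ to both sides. The right-hand side collapses to $B^2(1-B^2)+x$, so after rearranging one obtains $(1-B^2)(\alpha x-B^2)=x$. Plugging in $\alpha x=B(1-B)$ gives $\alpha x-B^2=B-2B^2=B(1-2B)$, and factoring $1-B^2=(1-B)(1+B)$ yields $x=B(1-B)(1+B)(1-2B)$ on the nose.

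For \eqref{eq.only.series.A}, the idea is to return to $\alpha x=B(1-B)$ and substitute the expression for $x$ that was just derived; since $B=x+O(x^2)$ is a nonzerodivisor in $\mathbb{C}[[x]]$, cancellation of the factor $B(1-B)$ is legitimate and produces the clean relation
\[
\alpha(1+B)(1-2B)=1, \qquad\text{i.e.}\qquad 2\alpha B^2+\alpha B=\alpha-1.
\]
Combining this with $B^2=B-\alpha x$ gives a \emph{linear} equation in $B$, solvable as $3\alpha B=2\alpha^2 x+\alpha-1$. Squaring this identity and using $9\alpha^2 B^2=9\alpha^2(B-\alpha x)=3\alpha\cdot(3\alpha B)-9\alpha^3 x$ eliminates $B$ entirely, leaving a polynomial identity in $\alpha$ and $x$. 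A short expansion of $(2\alpha^2 x+\alpha-1)^2$ and collection of terms produces exactly \eqref{eq.only.series.A}.

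The approach is algebraic rather than combinatorial: there is no conceptual obstacle beyond organizing the substitutions. The only delicate point is the cancellation of $B(1-B)$ in the derivation of the identity $\alpha(1+B)(1-2B)=1$; since $B$ has vanishing constant term and $1-B$ has nonzero constant term, both are regular in $\mathbb{C}[[x]]$, so this step is justified. Everything else is a routine manipulation of the two substitution rules $\alpha x=B(1-B)$ and $B^2=B-\alpha x$.
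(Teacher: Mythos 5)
Your proposal is correct and follows essentially the same route as the paper: both derive \eqref{eq.only.series.B} by clearing denominators in \eqref{eq.series.recursion.A} and substituting $\alpha x = B(1-B)$, both obtain the linear relation $3\alpha B = 2\alpha^2 x + \alpha - 1$ from $\alpha(1+B)(1-2B)=1$, and both eliminate $B$ by a final routine expansion (your squaring step is algebraically identical to the paper's substitution of $B$ back into $\alpha x = B - B^2$). Your explicit remark that $B(1-B)$ is a nonzerodivisor in $\mathbb{C}[[x]]$ is a welcome justification of a cancellation the paper performs implicitly.
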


\begin{proof}
To simplify notation we omit the dependence on $x$ whenever we are just evaluating on $x$, that is, we write $A,B$ instead of $A(x),B(x)$. We also use the notation $A_1:=A(x)+1$. By using \eqref{eq.series.recursion.B} and then \eqref{eq.series.recursion.A} we get that 
\[
B-B^2= A_1x=\pai \frac{B^2}{1-B^2}+\frac{B^2}{x}+1\pad x=\pai \frac{1}{1-B^2}+\frac{B^2}{x}\pad x=\frac{x}{1-B^2}+B^2.
\]
Multiplying by $1-B^2$ and solving for $x$ yields \eqref{eq.only.series.B}. On the other hand, from \eqref{eq.series.recursion.B} and \eqref{eq.only.series.B} we obtain
\[
\frac{1}{A_1}=\frac{x}{B(1-B)}=(1-2B)(1+B)=1-B-2B^2.
\]
Also, directly from \eqref{eq.series.recursion.B} we know that $A_1x=B-B^2$, a linear combination of both equations yields
\[
\frac{1}{A_1}-2A_1x=1-B-2B^2-2B+2B^2=1-3B.
\]
Solving for $B$ we obtain 
\[
B=\frac{1}{3}\pai 1- \frac{1}{A_1}+2A_1x\pad =\frac{2A_1^2x+A_1-1}{3A_1} .
\]

Replacing this on \eqref{eq.series.recursion.B} we obtain an expression entirely on $A_1$:
\begin{eqnarray*}
A_1x&=&\pai\frac{-2A_1^2x+2A_1+1}{3A_1}\pad \pai\frac{2A_1^2x+A_1-1}{3A_1}\pad \\ &=&\frac{-4A_1^4x^2-2A_1^3x+2A_1^2x+4A_1^3x+2A_1^2-2A_1+2A_1^2x+A_1-1}{9A_1^2}.
\end{eqnarray*}
Multiplying by $9A_1^2$ and simplifying we obtain \eqref{eq.only.series.A}.
\end{proof}

We are now ready to prove Theorem \ref{thm.inverse.moment.series}, the main result of this section. We use the standard notation
\[
R_\nu(z):=\sum_{n=1}^\infty \kappa_n(ab+ba) z^n\qquad\text{and}\qquad M_\nu(z):=\sum_{n=1}^\infty \phi((ab+ba)^n) z^n,
\]
for the $R$-transform and moment series of $\nu$, respectively. Where $\nu$ is the distribution of the free anti-commutator of two free Poisson distributions. 

\begin{proof}[Proof of Theorem \ref{thm.inverse.moment.series}]
We want to show that the compositional inverse of the moment series of $\nu$ is given by
\begin{equation}
  M^{\langle-1\rangle}_{\nu}(z)=\frac{-7z-6 + 3 \sqrt{(z+2)(9z+2) }}{4(z+2)^2(z+1)}.
\end{equation}

For this, we first observe that the $R_\nu$ is simply $2\cdot A$. This implies that $2(A(x)+1)=R_\nu(x)+2$. Therefore, if we multiply \eqref{eq.only.series.A} by 8 (to avoid fractions on the coefficients) and write it in terms of the $R$-transform we get
\begin{align*}
 0&=2(R_\nu(x)+2)^4x^2+7(R_\nu(x)+2)^3x-4(R_\nu(x)+2)^2x-8(R_\nu(x)+2)^2+4(R_\nu(x)+2)+8 \\
 &=\Big(2(R_\nu(x)+2)^4\Big)x^2+\Big((R_\nu(x)+2)^2(7R_\nu(x)+6)\Big) x -\Big( 4R^2_\nu(x)+12R_\nu(x)\Big).
\end{align*}

Let us simplify this expression by taking $y:=(R_\nu(x)+2)^2x$, this yields
\[
0=2y^2+(7R_\nu(x)+6)y -4(R^2_\nu(x)+3R_\nu(x)).
\]
This is a polynomial in $y$ of degree 2 so by the quadratic formula we know that 
\[
y=\frac{-(7R_\nu(x)+6) \pm \sqrt{(7R_\nu(x)+6)^2+4\cdot 2\cdot 4(R^2_\nu(x)+3R_\nu(x))}}{2\cdot 2}.
\]
The discriminant simplifies to
\[
49R^2_\nu(x)+84R_\nu(x)+36+32R^2_\nu(x)+96R_\nu(x)
=9(R_\nu(x)+2)(9R_\nu(x)+2),
\]
and we obtain that
\[
x= \frac{y}{(R_\nu(x)+2)^2} =\frac{-(7R_\nu(x)+6) \pm 3\sqrt{(R_\nu(x)+2)(9R_\nu(x)+2) }}{4(R_\nu(x)+2)^2}.
\]
Since we want $R_\nu(0)=0$ to hold, we pick the positive sign in the previous equation. Thus the compositional inverse $R^{\langle-1\rangle}(z)$ of the $R$-transform is
\[ 
R^{\langle-1\rangle}(z)= \frac{-7z-6 + 3\sqrt{(z+2)(9z+2) }}{4(z+2)^2}.
\]
And by a basic relation between $R^{\langle-1\rangle}(z)$ and $M^{\langle-1\rangle}(z)$, (see formula (16.31) of \cite{NS}), we conclude that
\[
M^{\langle-1\rangle}(z)= \frac{R^{\langle-1\rangle}(z)}{(1+z)}=\frac{-7z-6 +3 \sqrt{(z+2)(9x+2) }}{4(z+2)^2(z+1)},
\]
as desired.
\end{proof}

\begin{rem}
If we consider the Cauchy transform $G:=G_\nu(z)$ and use its relation to the $R$-transform or the Moment series of $\mu$, we can obtain that $G$ is the solution to a polynomial equation of degree six:
\[
2z^4G^6+8z^3G^5+12z^2G^4+8zG^3+2G^2+7z^3G^4+13z^2G^3+5zG^2-G-4z^2G^2-4zG+8=0.
\]
For instance, we can directly obtain this from \eqref{eq.only.series.A} and the relation $zG(z)+1=2A(G(z))+2$.
\end{rem}

\begin{rem}[Anti-commutator of two free Poisson with parameter $\lambda$]
\label{rem:free.poisson.lambda}
Let $\nu$ be the distribution of $ab+ba$, where $a,b$ are free Poisson of parameter $\lambda>0$. Notice that our discussion from Section \ref{sec:NCacd} together with Corollary \ref{cor.anticomm.free.poisson.lambda} yield that $\kappa_n (\nu) = P_n(\lambda)$ is a polynomial on $\lambda$ of degree $n+1$, with principal coefficient $2^n C_n$, and where 0 is a root of multiplicity $\left\lceil \tfrac{n}{2} \right\rceil+1$. More specifically, if we let $h:=\left\lfloor \tfrac{n}{2} \right\rfloor$ we have that
\[
P_n(\lambda)=\lambda^{n+1-h} \Big( d_0 \lambda^{h} + d_1 \lambda^{h-1}+\dots + d_{h}\Big),
\]
where $d_r:=2|\NCacd_{2n}^{(r)}|$ for $r=0,1,\dots,h$.
\end{rem}

\addtocontents{toc}{\SkipTocEntry}
\subsection{Anti-commutator of two even elements}
\label{subsec:anti-commutator.even.elements}

$\ $

\noindent 
In this section we apply Theorem \ref{Thm.anticommutator.main} to the special case where both $a$ and $b$ are even elements. Recall that $a$ is an \emph{even element} if all its odd moments are 0, namely $\varphi(a^n)=0$ for every odd $n$. This case was already studied by Nica and Speicher in connection with the free commutator (see Theorem 15.20 of \cite{NS}). Here we recover this formula using our main result and some nice combinatorial observations.

\begin{thm}[Theorem 15.20 of \cite{NS}]
\label{thm.anticommutator.even.variables}
Let $a$ and $b$ two free even random variables. Then the odd free cumulants of  the anti-commutator are 0 while the even free cumulants are given by the following formula
\begin{equation}
\label{eq.anticommutator.even}
\kappa_{2n}(ab+ba) =2\sum_{\pi_1\in \NN\CC(n)}  \pai \prod_{V\in \pi_1} \kappa_{2|V|}(a) \pad \sum_{\substack{\pi_2\in \NN\CC(n) \\ \pi_2 \leq Kr(\pi_1)}}  \pai \prod_{W\in \pi_2} \kappa_{2|W|}(b) \pad, \qquad \forall n\in\nn.
\end{equation}
\end{thm}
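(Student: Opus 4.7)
My plan for Theorem \ref{thm.anticommutator.even.variables} is to reduce it to a specialization of Theorem \ref{Thm.anticommutator.main}, using evenness to shrink the index set and an $a\leftrightarrow b$ symmetry to produce the factor $2$ in \eqref{eq.anticommutator.even}. Starting from \eqref{formula.anticommutator.main} applied to $\kappa_m(ab+ba)$, since $a$ and $b$ are even the identities $\kappa_j(a)=\kappa_j(b)=0$ for odd $j$ kill every term whose $\pi\in\NCac_{2m}$ has a block of odd size. By Proposition \ref{prop.NCacd.nice.description} combined with Remark \ref{rem.even.parity.preserving}, the surviving $\pi$ are exactly those of the form $Kr(\sigma)$ for a parity-preserving $\sigma\in\NCacd_{2m}$.

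Next, I would describe such a $\sigma$ explicitly. Each block $B_i$ (which must contain the odd element $i$) is forced to be the singleton $\{i\}$, while the remaining blocks $E_1,\dots,E_r$ are even-sized and partition the $m$ even positions $\{2,4,\dots,2m\}$. Since $|E_1|+\cdots+|E_r|=m$ with each summand even, $m$ must itself be even; this immediately yields $\kappa_m(ab+ba)=0$ for $m$ odd. For $m=2n$, the whole $\sigma$ is determined by $\sigma_2:=\{E_1,\dots,E_r\}$, which (after identifying $\{2,4,\dots,4n\}$ with $[2n]$) ranges freely over $\NCeven(2n)$.

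The main technical step is computing $\pi=Kr(\sigma)$ explicitly. I would argue that each odd singleton $\{2k-1\}$ of $\sigma$ forces the two Kreweras points flanking it to share the same block of $\pi$, producing $2n$ ``super-points'' $u_1,\dots,u_{2n}$ of size two. The blocks of $\pi$ are unions of super-points, grouped according to a non-crossing partition $\hat\pi\in\NN\CC(2n)$ which, up to a cyclic shift, coincides with $Kr(\sigma_2)$. Since $\sigma_2\in\NCeven(2n)$, Remark \ref{rem.even.parity.preserving} gives that $\hat\pi$ is parity-preserving, hence writable as $\hat\pi=\lan\pi_1,\pi_2\ran$ for a unique pair $(\pi_1,\pi_2)\in\NN\CC(n)\times\NN\CC(n)$ satisfying $\pi_2\leq Kr(\pi_1)$. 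This gives a bijection between parity-preserving $\sigma\in\NCacd_{4n}$ and such pairs. Moreover, the bipartition of $\GG_\pi$ aligns with the parity split of $\hat\pi$: $\pi'$ is built from the odd-indexed super-points (encoded by $\pi_1$) and $\pi''$ from the even-indexed ones (encoded by $\pi_2$), and each block of $\pi$ has exactly twice the size of the corresponding block of $\pi_1$ or $\pi_2$.

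Substituting into \eqref{formula.anticommutator.main} I would obtain
\[
\kappa_{2n}(ab+ba)=\sum_{\substack{(\pi_1,\pi_2)\\ \pi_2\leq Kr(\pi_1)}}\Bigg(\prod_{V\in\pi_1}\kappa_{2|V|}(a)\prod_{W\in\pi_2}\kappa_{2|W|}(b)+\prod_{V\in\pi_1}\kappa_{2|V|}(b)\prod_{W\in\pi_2}\kappa_{2|W|}(a)\Bigg).
\]
To extract the factor $2$ of \eqref{eq.anticommutator.even} I would then use that the conditions $\pi_2\leq Kr(\pi_1)$ and $\pi_1\leq Kr(\pi_2)$ are equivalent: this follows from the cyclic rotation invariance of $\NN\CC(2n)$ applied to $\lan\pi_1,\pi_2\ran\in\NN\CC(2n)$, which for a parity-preserving partition swaps the roles of odd and even positions. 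Renaming $(\pi_1,\pi_2)\leftrightarrow(\pi_2,\pi_1)$ in the second summand then shows the two summands coincide, completing the proof. The hard part will be the super-point analysis in the third paragraph---rigorously identifying how the blocks of $\pi$ are built from the super-points, pinning down the cyclic shift between $\hat\pi$ and $Kr(\sigma_2)$, and verifying that the $\GG_\pi$-bipartition splits super-points according to the odd/even positioned parts of $\hat\pi$; everything else is bookkeeping via the tools already developed in Sections \ref{sec:anticomm} and \ref{sec:NCacd}.
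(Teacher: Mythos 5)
Your overall strategy is the same as the paper's: specialize \eqref{formula.anticommutator.main}, use evenness to restrict the index set to the even partitions in $\NCac_{2m}$, pass through $Kr^{-1}$ to the parity-preserving members of $\NCacd_{2m}$ (which forces the odd-indexed blocks to be singletons and forces $m$ to be even), and then recover $\pi$ by ``fattening'' a parity-preserving partition --- your super-points are exactly the pairs of the partition $I'_{2m}$ used in the paper's proof, and the degree-doubling and the identification of the bipartition of $\GG_\pi$ with the odd/even split are handled identically.

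The one step that fails as written is the extraction of the factor $2$. The asserted equivalence of $\pi_2\leq Kr(\pi_1)$ and $\pi_1\leq Kr(\pi_2)$ is false: take $\pi_1=\{\{1,3\},\{2\},\{4\}\}\in\NN\CC(4)$ and $\pi_2=Kr(\pi_1)=\{\{1,2\},\{3,4\}\}$; then $\pi_2\leq Kr(\pi_1)$, but $Kr(\pi_2)=\{\{1\},\{2,4\},\{3\}\}$ does not lie above $\pi_1$, since $1$ and $3$ sit in different blocks of $Kr(\pi_2)$. Consequently the plain renaming $(\pi_1,\pi_2)\leftrightarrow(\pi_2,\pi_1)$ does not map the index set $\{(\pi_1,\pi_2):\pi_2\leq Kr(\pi_1)\}$ to itself, and your last display does not follow as stated. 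The repair is the rotation argument you allude to, carried out correctly: the rotation $i\mapsto i+1 \pmod{2n}$ of $[2n]$ sends $\lan\pi_1,\pi_2\ran$ to $\lan s(\pi_2),\pi_1\ran$, where $s$ is the cyclic shift $k\mapsto k+1$ of $[n]$. Since rotation preserves non-crossingness, the map $(\pi_1,\pi_2)\mapsto (s(\pi_2),\pi_1)$ is a bijection of the index set onto itself which exchanges the two coordinates up to the shift $s$; and since $s$ preserves the multiset of block sizes, the two summands in your penultimate display have equal totals after summation, which is what produces the factor $2$. (The paper's own proof is silent on this point, so making the rotation explicit is a worthwhile addition --- but the symmetry of the relation $\pi_2\leq Kr(\pi_1)$ is not the right lemma; only the sums, not the index sets, are swap-invariant.)
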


\begin{proof}
The approach is to use our main theorem, to express the cumulant of the anti-commutator as sum indexed by $\NCac_{2n}$, that can be further restricted to even partitions. If we apply $Kr^{-1}$ to this index set, we obtain partitions in $\NCacd_{2n}$ that are parity preserving. These partitions have a simple description, so when we apply the Kreweras complement we retrieve the result. Let us begin by applying Theorem \ref{Thm.anticommutator.main} to rewrite $\kappa_n(ab+ba)$ as
\begin{equation}
\label{eq.anticomm.formula.even}
\sum_{\substack{\pi\in \NCac_{2n} \\ \pi=\pi'\sqcup\pi''}} \Bigg( \prod_{V\in \pi'} \kappa_{|V|}(a)  \prod_{W\in \pi''} \kappa_{|W|}(b) + \prod_{V\in \pi'} \kappa_{|V|}(b)  \prod_{W\in \pi''} \kappa_{|W|}(a) \Bigg).  
\end{equation} 

Recall that by the moment-cumulant formula, since $a$ and $b$ are even, this readily implies that all its odd cumulants vanish. Thus, whenever we have some block $V\in\pi$ such that $|V|$ is odd, the whole product will vanish. As a result, if $n$ is odd, all the products will have such a block and we get that $\kappa_n(ab+ba)= 0$. Moreover when $n$ is even, the partition $\pi$ must be even for the product not to vanish. Thus the index of the sums in \eqref{eq.anticomm.formula.even} can be restricted to $\NCac_{2n}\cap \NCeven(2n)$. Since we understand better $\NCacd_{2n}$, let us take inverse of the Kreweras complement map. Using Remark \ref{rem.even.parity.preserving} we get that
\[
Kr^{-1}(\NCac_{2n}\cap \NCeven(2n))=Kr^{-1}(\NCac_{2n})\cap Kr^{-1}(\NCeven(2n))=\NCacd_{2n} \cap \NCpp(2n).
\]

Notice that every partition $\sigma\in \NCacd_{2n} \cap \NCpp$ can be written as $\{B_1,\dots,B_{2n-1},E_1,\dots, E_r\}$ with $i\in B_i$ and $|E_j|$ even. Moreover, the blocks of $\sigma$ have the same parity. Therefore, $B_i\subset \{1,3,\dots,2n-1\}$ and this implies that $B_i=\{i\}$ for $i=1,3,\dots, 2n-1$. On the other hand, we have that $E_1\cup\dots \cup E_r=\{2,4,\dots,2n\}$ and since all have even size, we get that $\sigma':=\sigma|\{2,4,\dots,2n\}$ is an even partition. Therefore, $\sigma=\lan 0_n,\sigma'\ran$ for an even partition $\sigma'$ (see Notation \ref{nota.odd.even.partition}), and we obtain the simpler description
\[
\NCacd_{2n} \cap \NCpp(2n)=\{\lan 0_n,\sigma'\ran \in \NN\CC(2n): \sigma'\in \NCeven(n)\}.
\]

Now we apply the Kreweras complement to this set in order to retrieve $\NCac_{2n}\cap \NCeven(2n)$. Let us denote by $I'_{2n}:=\{\{2,3\},\{4,5\},\dots,\{2n-2,2n-1\},\{1,2n\}\}$ the partition which is the Kreweras complement of $\lan 0_n,1_n\ran=\{\{1\},\{3\},\dots,\{2n-1\},\{2,4,\dots,2n\}\}$. For $\sigma=\lan 0_n,\sigma'\ran$ we observe that if $\sigma \leq \lan 0_n,1_n\ran$ then $Kr(\sigma)\geq I'_{2n}$. Moreover the fact that $\sigma'\in \NCeven(n)$ implies that $Kr(\sigma)|\{2,4,\dots,2n\}\in \NCpp(n)$. 

Let us now consider the `fattening' map $\Psi:[n]\to I'_{2n}$ that sends every $j\in [n]$ to the pair $\Psi(j)=\{2j,2j+1\}\in I'_{2n}$ (where we assume $2n+1=1$). We can naturally extend this map to blocks $V=\{j_1,\dots,j_k\}\subset[n]$ by taking $\Psi(V)=\{2j_1,2j_1+1,\dots,2j_k,2j_k+1\}\subset[2n]$, and we can extend it further to partitions $\tau\in\NN\CC(n)$ by taking $\Psi(\tau)=\{\Psi(V):V\in\tau\}\in \NN\CC(2n)$. Then we have a bijection
\[
\Psi: \NN\CC(n) \to \{\pi\in \NN\CC(2n):\pi\geq I'_{2n}\},
\]
and we obtain that partitions $\pi\in\NCac\cap \NCeven=Kr(\NCacd \cap \NCpp)$ can be described as $\pi=\Psi(\tau)$ for some $\tau\in\NCpp(n)$.
Therefore in \eqref{eq.anticomm.formula.even} we can alternatively sum over $\Psi(\NCpp(n))$. Moreover, the tuple $\varepsilon_\pi$ associated to a partition $\pi\in \Psi(\NCpp(n))$ satisfies that $A(\varepsilon_\pi)=\{\Psi(V)\subset[2n]: V \subset\{1,3,\dots,n-1\}\}$ and $B(\varepsilon_\pi)=\{\Psi(V)\subset[2n]: V \subset\{2,4,\dots,n\}\}$. Thus, for $\pi=\Psi(\tau)$ we have
\[
\prod_{\substack{V\in \pi, \\ V\subset A(\varepsilon_\pi)}} \kappa_{|V|}(a)\prod_{\substack{W\in \pi, \\ W\subset B(\varepsilon_\pi)}} \kappa_{|V|}(b)=\prod_{\substack{V'\in \tau, \\ V'\subset \{1,3,\dots,n-1\} }} \kappa_{2|V'|}(a) \prod_{\substack{W'\in \tau, \\ W'\subset \{2,4,\dots,n\}}} \kappa_{2|W'|}(b).
\]
Finally, if we write $n=2m$ we have the description $\NCpp(n):\{\lan\pi_1,\pi_2\ran\in \NN\CC(n):  \pi_1,\pi_2\in\NN\CC(m),\ \pi_2\leq Kr(\pi_1)\}$, and we obtain the desired result.
\end{proof}

\section{Cacti graphs and quadratic forms}
\label{sec:quadratic.forms}

This section is divided in two parts. The first part is devoted to prove Theorem \ref{thm.anticommutator.graphs}, which is a formula for the anti-commutator where the summation is now indexed by cacti graphs. In the second part we prove Theorem \ref{thm.quadratic.forms}, a generalization that expresses cumulants of quadratic forms as sums indexed by colored cacti graphs.  

\addtocontents{toc}{\SkipTocEntry}
\subsection{A formula in terms of cacti graphs}

$\ $

\noindent 
Through this section we will consider a directed version of the graph $\GG_\pi$ from Definition \ref{defi.Graph.pi}.

\begin{defi}
Given a $\pi\in \NN\CC(n)$, we denote by $\overrightarrow{\GG_\pi}$ the directed graph with vertices given by the blocks of $\pi$ and $n$ edges, where for the direction of the $k$-th edge, we simply select the block containing element $2k-1$ as the outgoing vertex and the block containing element $2k$ as the ingoing vertex.

For a directed graph $\overrightarrow{\GG}$, we use the notation $\overrightarrow{(v,w)}$ to mean that $v$ is the outgoing vertex and $w$ the ingoing vertex. We say that a cycle $v_1,\dots,v_j$ is \emph{oriented} if all its edges are in the same direction, namely $\overrightarrow{(v_i,v_{i+1})}$ for $i=1,\dots,j$.
\end{defi}

We begin by proving Proposition \ref{prop.cactus}, which asserts that graphs $\GG_\pi$ coming from a non-crossing partition $\pi$ must be cacti. Moreover, we notice that the cycles of $\overrightarrow{\GG_\pi}$ are oriented.

\begin{proof}[Proof of Proposition \ref{prop.cactus}]
Let $\pi$ be a non-crossing partition and take a block $V\in \pi$. The other blocks $W\in\pi$ can be naturally separated in two groups. Either $W$ is nested inside of $V$, namely $\min{V}\leq \min{W}\leq \max{W} \leq \max{V}$, otherwise we say the $W$ is outside $V$. Let us denote 
\[
\NN_V:=\{W\in\pi: W \text{ nested inside of }V\} \qquad \text{and}\qquad \OO_V:=\pi\backslash (\NN_V\cup \{V\}).
\]

Notice that there is no edge in $\GG_\pi$ that connects a vertex from $\NN_V$ with a vertex from $\OO_V$. Indeed, for the sake of contradiction assume that $Y\in\NN_V$, $W\in\OO_V$ and the edge $e_i=(2i-1,2i)$ connects $Y$ and $W$, this means that $Y$ and $W$ have elements that differ by 1, but this is impossible as $Y\subset \{\min(V)+1,\dots, \max(V)-1\}$ while $W\subset \{1,\dots \min(v)-1\}\cup\{\max(V)+1,\dots, 2n\}$, a contradiction. 

If we consider a simple cycle $V_1,\dots,V_j$ of $\GG_\pi$, the previous observation yields that for $i=1,\dots,j$ if we take the vertex $V_i$, then the other vertices $V_1,\dots, V_{i-1},V_{i+1}, \dots,V_j$ all belong either to $\NN_{V_i}$ in which case we say $V_i$ is principal, or else they all belong to $\OO_{V_i}$. It is easy to check that there must be at least one principal block, otherwise we can order the blocks $V_1<V_2<\dots< V_j$ in such a way that $\max(V_{i})+1=\min(V_{i+1})$ for $i=1,\dots j-1$. But we also need that $\max(V_{j})+1=\min(V_{1})$ which is impossible. Moreover, if a vertex $V$ is principal, then the other blocks are nested inside it and no other vertex can be principal, so in every cycle there is exactly one principal vertex. Assume without loss of generality that $V_1$ is the principal vertex, this forces that $\max(V_{i})+1=\min(V_{i+1})$ for $i=2,\dots j-1$, and that $\min(V_{2})-1$ and $\max(V_j)+1$ are two consecutive elements of $V_1$.  Notice that if we consider the directed version $\overrightarrow{\GG_\pi}$, this implies that the cycle is oriented, thus every simple cycle of $\overrightarrow{\GG_\pi}$ must be oriented.

Observe that a principal vertex $V$ is a block of the partition $\pi$, thus its elements $V=\{x_1,\dots,x_r\}$ naturally leave $r-1$ pockets $P_s:=\{x_s+1,\dots x_{s+1}-1\}$ for $s=1,\dots, r-1$. Then, if an edge $(W,Y)$ is on a simple cycle with principal vertex $V$, this means that both blocks are contained on the same pocket, say $P_i$. Moreover, the only possible cycle within $P_i$ that has $V_1$ as principal vertex must consist of those blocks covered by $V$ in $P_i$ (namely, the outer blocks of $\pi|P_i$). This implies that given a vertex $V$ and an edge $(W,Y)$ of a graph $\GG_\pi$, there is at most one simple cycle that has $V$ as principal vertex and $(W,Y)$ as an edge.

To conclude that every $\GG_\pi$ is a cactus we proceed by contradiction. Assume that the edge $(Y,W)$ belongs to two different simple cycles $s_1$ and $s_2$, and consider the principal vertex $V_1$ and $V_2$ of each cycle. Then $V_1$ and $V_2$ both cover $Y$ (and $W$) so $V_1=V_2$, but this further implies that $s_1=s_2$.
\end{proof}

\begin{rem}
Recall that for a planar graph $\GG$, Euler's formula states that $v-e+f=2$, where $v:=|V_G|$, $e:=|E_\GG|$ and $f$ are the number of vertices, edges and faces, respectively, of the graph $\GG$. This formula has a very nice interpretation in terms of partitions. Consider a partition $\sigma\in \NCacd_{2n}$, its Kreweras complement $\pi:=Kr(\sigma)\in \NCac_{2n}$ and the graph $\GG_\pi$. First, the vertices of $\GG_\pi$ are the blocks of $\pi$, so $v=|\pi|$. Secondly, we have a fixed number of edges $n$, so $e=n$. Finally, we can write $\sigma$ as $\{B_1,B_3,\dots, B_{2n-1},E_1, \dots, E_r\}$, and we noticed in Section \ref{sec:anticomm} that each of the even blocks $E_j$ corresponds to a simple cycle of $\GG_\pi$, or equivalently, to an inner face of its planar representation. If we add the outer face we get that $f=r+1=|\sigma|-n+1$. Thus, Euler's formula asserts that $|\pi|-n +|\sigma|-n+1=2$ or equivalenty that $|\pi|+|\sigma|=2n+1$. But this is a well known fact of the Kreweras complement. Thus, in this case the formula satisfied by the Kreweras complement, $|\sigma|+|Kr(\sigma)|=2n+1$ is just a recast of Euler's formula. 
\end{rem}

Next we study the size of $\{ \pi\in \NN\CC(2n): \GG_\pi=\GG\}$ for a given cactus graph $\GG$. We begin by explaining why every cactus graph $\GG$ has at least one outercycle $C$ (possibly several).

\begin{rem}
\label{rem.outercycle}
Cacti graphs are outerplanar, that is, they admit a planar representation where all vertices belong to the outer (unbounded) face. In such a planar representation, each simple cycle corresponds to an inner face, and for every edge $e\in E_\GG$ there are exactly two faces that have $e$ as an edge. If $e$ is rigid, one of the faces is the simple cycle it belongs to, while the other must be the outer face. On the other hand, if $e$ is flexible, the `two' faces turn out to be the same face, which has to be the outer face. Thus, if we fix an edge and start moving around the contour of the outer face in counter-clockwise direction, we will obtain a cycle $C=(v_1,e_1,v_2,e_2,\dots,v_j,e_j)$, that passes once through every rigid edge and twice through every flexible edge, thus $C$ is an outercycle. Notice that the outercycle completely determines the planar representation of $\GG$, thus it determines $\GG$, as we can retrieve the graph by drawing the edges in order in counter-clockwise direction, being careful that we may need to return to every vertex several times. The number of outercycles of each cactus $\GG$ depends on the number of planar representations. Furthermore, for each planar representation we may have various outercycles, depending on which edge we start, and some of these outercycles may turn out to be the same, if there is an automorphism of $\GG$ sending one to another.
\end{rem}

After this discussion, we are ready to prove Proposition \ref{prop.preimage}, which asserts $|\{\pi\in \NN\CC(2n): (\GG_\pi,C_\pi)=(\GG,C)\}|=2^{f_C}$, where $f_C$ is the number flexible edges without counting the first edge $C$.

\begin{proof}[Proof of Proposition \ref{prop.preimage}]
The main idea is that if we consider a $\pi\in \NN\CC(2n)$, then we already observed that $\overrightarrow{\GG_\pi}$ is a cactus whose all simple cycles are oriented. Moreover, $C_\pi$ was constructed in a way that when a rigid edge $e_i=(v_1,v_{i+1})$ appears in $C_\pi$ it is pointing to the right $\overrightarrow{e_i}$. Thus in order to go from the pair $(\overrightarrow{\GG_\pi},C_\pi)$ to $(\GG_\pi,C_\pi)$ we just need to forget the direction of all the edges, except $\overrightarrow{e_1}$ which is oriented to the right by construction. Then, to get the preimage we have a pair $(\GG,C)$ and want to reconstruct $(\vec{\GG}_\pi,C_\pi)$, but this amounts to remembering (choosing) the direction of the flexible edges different from $e_1$, since the directions of the rigid edges and of $e_1$ are determined by $C$. And this can be done in $2^{f_C}$ different ways. The detailed proof consists of two main steps:

First, we transform the undirected graph $\GG$ into a directed graph $\overrightarrow{\GG}$. For this, we pick a direction for the edges $E_\GG$. We will do this by assigning a direction to the edges $e_1,\dots, e_j$ of $C$. For $e_r=(v_r,v_{r+1})$ we use the notation $\overrightarrow{e_i}=\overrightarrow{(v_r,v_{r+1})}$ or $\overleftarrow{e_i}=\overleftarrow{(v_r,v_{r+1})}$. We begin by assigning the simplest direction, that is $\overrightarrow{e_1},\dots, \overrightarrow{e_j}$. Since every rigid vertex appears once in $C$, this direction is well defined for these edges. Moreover, since we draw $C$ in counter-clockwise direction, this implies that every simple cycle will be also oriented in counter-clockwise direction. On the other hand, flexible edges appear twice in $C$, and actually the direction we assigned is always inconsistent. For instance if the flexible edge $e$ appears in $C$ as $e_r=(v_r,v_{r+1})$ and as $e_s=(v_s,v_{s+1})$ for $1\leq r<s\leq j$, then we must have that $v_{r+1}=v_s$ and $v_r=v_{s+1}$. This is because $e$ is not in a simple cycle, and thus if we remove $e$ from $\GG$ we disconnect the graph, (separating $v_r$ from $v_{r+1}$) and since $v_{r+1},v_{r+2},\dots, v_s$ is a path in the disconnected graph, this implies that $v_{r+1}=v_s$ and thus $v_r=v_{s+1}$. Thus we either need to assign $\overrightarrow{e_r}$ and $\overleftarrow{e_s}$ or take $\overleftarrow{e_r}$ and $\overrightarrow{e_s}$. If $e_1$ is a flexible edge, we always pick $\overrightarrow{e_1}$, since $C_\pi$ starts with the block containing element 1, and goes to the block containing element 2. For any other flexible edge, we can go for any of the two choices. After this procedure, in our sequence $\overrightarrow{e_1},\overrightarrow{e_2},\overleftarrow{e_3},\dots, \overrightarrow{e_j}$ of edges of $C$ we end up with $n$ of them pointing to the right, one for each rigid edge and one for each flexible edge. Meanwhile, the remaining $j-n$ edges of $C$, one for each flexible edge, are pointing to the left. Choosing a direction for each flexible edge different from $e_1$ can be done in two ways, so in total we have $2^{f_C}$ possible directed graphs $\overrightarrow{\GG}$ obtained from $\GG$.

The second part of the proof consists on observing that each directed oriented cactus $\overrightarrow{\GG}$ determines a unique partition $\pi\in \NN\CC(n)$ such that $(\GG_\pi,C_\pi)=(\GG,C)$. To retrieve the partition we consider in order the $n$ edges of $C$ that are pointing to the right: $\overrightarrow{e_{i_1}},\overrightarrow{e_{i_2}},\dots, \overrightarrow{e_{i_n}}$ (with $1=i_1<i_2<\dots<i_n\leq j$). Then, we set $\overrightarrow{l_r}:=\overrightarrow{e_{i_r}}$ to be the edge going from the block containing $2r-1$ to the block containing $2r$. Namely, if $\overrightarrow{l_r}=\overrightarrow{(u,v)}$ this means that $u\ni 2r-1$ and $v\ni 2r$. Notice that this uniquely determines a non-crossing partition. This is because if we have a vertex $V$ with outcoming edges $l_{k_1},\dots, l_{k_r}$ and incoming edges $l_{m_1},\dots, l_{m_s}$, this implies that the block $V$ has elements $2k_1-1, \dots, 2k_r-1$ and $2m_1,\dots,2m_s$.
\end{proof}

\begin{rem}
Notice that as a corollary of the result we just proved, we can give a new proof of Proposition \ref{prop.level.0}, which states that $|\NCacd_{2n}^{(0)}|=2^{n-1}C_n$. Recall that $\NCacd_{2n}^{(0)}$ consist of those partitions of the form $\sigma=\{B_1,B_3,\dots,B_{2n-1}\}$ that separate odd elements and do not have `even' blocks. But in terms of the graph of its Kreweras complement $\pi:=Kr(\sigma)$, this means that $\GG_{\pi}$ is a cactus graph with no cycles. This is better known as a tree. Let $OTG_n$ denote the set of oriented tree graphs, which are simply those oriented cacti graphs $(\GG,C)\in OCG_n$ such that $\GG$ is a tree. Notice that $OTG_n$ is in bijection with planar rooted trees with $n$ edges, which are counted by the Catalan number $C_n$. Indeed, given a planar rooted tree, we take $\GG$ to be the tree itself, and for the orientation $C$ we start in the root and go around in counter-clockwise direction. Moreover, in a tree all the edges are flexible, so $f_C=n-1$ (as the first edge of the cycle do not counts). Thus, we can conclude that $|\NCacd_{2n}^{(0)}|=|\{\pi\in \NN\CC(2n):(\GG_\pi,C_\pi)\in OTG_n\}=2^{n-1}C_n$.

We can also give a new proof of Proposition \ref{prop.level.max}, which asserts that
\[
|\NCacd_{4k}^{(k)}|=C_k\qquad \text{and} \qquad |\NCacd_{4k+2}^{(k)}|=(2k+1)C_{k+1},\qquad  \forall k\in \nn.
\]
In this case we look for cacti graphs $\GG$ with the largest amount of cycles, in the even case when the cactus has $2k$ edges, the largest amount of cycles can be $k$, all of which should have size 2, and all edges must be rigid. But cycles of size two are just two edges joining the same two vertices. And if we `thin' the cactus by identifying the edges in each pair, we end up with a tree on $k$ edges. Each orientation of the resulting tree gives one possible orientation of the associated cactus and we directly conclude that $|\NCacd_{4k}^{(k)}|=C_k$. The odd case can be handled in a similar way.
\end{rem}

We now proceed to prove Theorem \ref{thm.anticommutator.graphs}, which is a purely graph theoretic formula for the anti-commutator, where index now runs over bipartite cacti graphs.

\begin{proof}[Proof of Theorem \ref{thm.anticommutator.graphs}]
Recall that in Theorem \ref{Thm.anticommutator.main} we found the following formula for the anti-commutator:
\begin{equation}
\kappa_n(ab+ba) = \sum_{\substack{\pi\in \NCac_{2n} \\ \pi=\pi'\sqcup\pi''}} \Bigg( \prod_{V\in \pi'} \kappa_{|V|}(a)  \prod_{W\in \pi''} \kappa_{|W|}(b) + \prod_{V\in \pi'} \kappa_{|V|}(b)  \prod_{W\in \pi''} \kappa_{|W|}(a) \Bigg).
\end{equation}
From Proposition \ref{prop.cactus} we know that $\GG_\pi$ is always a cactus, and we already saw in Section \ref{sec:anticomm} that $\GG_\pi$ must be bipartite. In other words, all its simple cycles are of even size. So we can break the sum depending on which bipartite cactus graph $\GG_\pi$ is formed by $\pi$, moreover we consider the canonical orientation $C_\pi$. Then, $\kappa_n(ab+ba)$ is expressed as
\begin{equation}
\sum_{\substack{(\GG,C) \in OCG_n\\ \GG\text{ is bipartite}}}\  \sum_{\substack{\pi\in \NCac_{2n} \\ (\GG_\pi, C_\pi)=(\GG,C) \\ \pi=\pi'\sqcup\pi''}} \Bigg( \prod_{V\in \pi'} \kappa_{|V|}(a)  \prod_{W\in \pi''} \kappa_{|W|}(b) + \prod_{V\in \pi'} \kappa_{|V|}(b)  \prod_{W\in \pi''} \kappa_{|W|}(a) \Bigg).
\end{equation}
But we know that $(\pi',\pi'')=(V'_\GG, V''_\GG)$ is the unique bipartition of $\GG$. And we can observe that for $V\in \pi$ the size of the block $|V|$ corresponds to the degree of $V$ as a vertex of $\GG$. Thus, once we fix a graph $\GG$, no matter which partition it came from, the term of the sum is given by:
\[
\prod_{v\in V'_\GG } \kappa_{|v|}(a)  \prod_{w\in V''_\GG} \kappa_{|w|}(b) +\prod_{v\in V'_\GG }\kappa_{|v|}(b)  \prod_{w\in V''_\GG} \kappa_{|w|}(a).
\]
Moreover, from Proposition \ref{prop.preimage} we know that $|\{\pi\in \NN\CC(2n): (\GG_\pi,C_\pi)=(\GG,C)\}|=2^{f_C}.$ and this allows us to conclude that 
\[
\kappa_n(ab+ba) = \sum_{\substack{(\GG,C)\in OCG_{n}\\ \GG\text{ is bipartite}}} 2^{f_C} \Bigg( \prod_{v\in V'_\GG } \kappa_{|v|}(a)  \prod_{w\in V''_\GG} \kappa_{|w|}(b) +\prod_{v\in V'_\GG }\kappa_{|v|}(b)  \prod_{w\in V''_\GG} \kappa_{|w|}(a) \Bigg).
\]
\end{proof}

A direct application of Theorem \ref{thm.anticommutator.graphs} is Corollary \ref{cor.anticommutator.graphs.id} concerning the case where the variables have the same distribution. Another interesting application is Proposition \ref{prop.anticom.semicircular.arbitrary} which studies the case when one variable is semicircular. We present the proof of the latter as it is not straightforward as the former. 

\begin{proof}[Proof of Proposition \ref{eq.anticomm.semicircular.arbitrary}]
Recall that Equation \eqref{eq.anticommutator.graphs} asserts that for every $n\in\nn$
\[
\kappa_m(as+sa) = \sum_{\substack{(\GG,C)\in OCG_{m}\\ \GG\text{ is bipartite}}} 2^{f_C} \Bigg( \prod_{v\in V'_\GG } \kappa_{d(v)}(a)  \prod_{w\in V''_\GG} \kappa_{d(w)}(s) +\prod_{v\in V'_\GG }\kappa_{d(v)}(s)  \prod_{w\in V''_\GG} \kappa_{d(w)}(a) \Bigg).
\]
Recall that $\kappa_{d(v)}(s)=0$ unless $d(v)=2$, so the whole product will vanish unless $d(v)=2$ for all $V'_\GG$. Let us say that $V'_\GG$ and $V''_\GG$ are 2-regular if all its vertices have degree 2 in $\GG$. This means that our formula simplifies to
\[
\kappa_{m}(as+sa) = \sum_{\substack{(\GG,C)\in OCG_{m}\\ \GG\text{ is bipartite}\\ V'_\GG \text{ is 2-regular} }} 2^{f_C} \prod_{w\in V''_\GG } \kappa_{d(w)}(a)  +\sum_{\substack{(\GG,C)\in OCG_{2n}\\ \GG\text{ is bipartite}\\ V''_\GG \text{ is 2-regular} }} 2^{f_C} \prod_{v\in V'_\GG } \kappa_{d(v)}(a).
\]
Notice that since $(V'_\GG,V''_\GG)$ is a bipartition of the graph $\GG$ which has $m$ edges, this implies that $\sum_{v\in V'_\GG} d(v)=m=\sum_{v\in V''_\GG} d(v)$. But, if $V'_\GG$ or $V''_\GG$ are 2-regular this forces $m$ to be even. Thus, the product vanishes whenever $m$ is odd and we get $\kappa_{m}(as+sa)=0$. For the even case $m=2n$, let us focus on the second summation. Notice that given a bipartite graph $(\GG,C)\in OCG_{2n}$ such that $V''_\GG$ is 2-regular we can construct a graph $(\GG',C')\in OCG_{n}$, by `erasing' all the vertices $w\in V''_\GG$ but keeping track of the 2 edges ending on $w$, say $e=(u,w)\in E_\GG$ and $f=(w,v)\in E_\GG$, by merging them into the edge $g=(u,v)\in E_{\GG'}$. Formally, we let $V_{\GG'}=V'_\GG$, and draw and edge between $u,v\in V'_\GG$ if there exist a vertex $w\in V''_\GG$ such that $(u,w),(w,v)\in E_\GG$. 

Given an outercycle $C=(v_1,e_1,w_1,f_1,v_2,e_2,w_2,f_2,\dots,w_j,f_j)$ of $\GG$, where $v_i\in V'_\GG$ and $w_i\in V''_\GG$ for $i=1,\dots,j$, there is a natural way to construct an outercycle $C'=(v_1,g_1,v_2,g_2,\dots,v_j,g_j)$ of $\GG'$, by letting $g_i\in E_{\GG'}$ be the edge obtained by merging edges $e_i$ and $f_i$ of $\GG$. Notice that for each $w\in V''_\GG$, either both $(u,w)$ and $(w,v)$ are flexible edges of $\GG$ and thus $(u,v)$ is flexible edge of $\GG'$ or both are rigid edges of $\GG$ and $(u,v)$ is a rigid edge of $\GG'$. This implies that that the flexible edges of $\GG'$ are half the flexible edges of $\GG$, thus we have that if $e_1$ is flexible, then $f_{C}=2f_{C'}+1=g_{C'}$, and if $e_1$ is rigid, then $f_{C}=2f_{C'}=g_{C'}$.

The previous procedure is actually a bijection between $OCG_{n}$ and 
\[
\{(\GG,C)\in OCG_{2n}: \GG\text{ is bipartite}, V''_\GG \text{ is 2-regular} \}.
\]
This means that we have
\begin{equation}
\label{eq.semicircular.arbitrary.1}
\sum_{\substack{(\GG,C)\in OCG_{2n}\\ \GG\text{ is bipartite}\\ V''_\GG \text{ is 2-regular} }} 2^{f_C} \prod_{v\in V_{\GG'} } \kappa_{d(v)}(a)= \sum_{\substack{(\GG',C')\in OCG_{n}\\ e_1 \text{ is flexible} }} 2^{g_{C'}} \prod_{v\in V_{\GG'} } \kappa_{d(v)}(a) 
\end{equation}

With a similar, but slightly more involved procedure, we can biject $OCG_{n}$ with
\[
\{(\GG,C)\in OCG_{2n}: \GG\text{ is bipartite}, V'_\GG \text{ is 2-regular} \},
\]
and we again obtain the right-hand side of \eqref{eq.semicircular.arbitrary.1} when on the left-hand side we take $V'_\GG$ to be 2-regular instead of $V''_\GG$. Adding both cases we conclude Equation \eqref{eq.anticomm.semicircular.arbitrary}.
\end{proof}

To conclude this section let us comment on how the even case looks in this cactus graph approach. This case is particularly interesting as the graphs appearing here are the same that appear in the study of Traffic Freeness.

\begin{defi}
We say that a cactus graph $\GG$ is \emph{rigid}, if all its edges are rigid. Namely, all its edge belong to exactly one simple cycle.
\end{defi}

\begin{rem}
\label{rem.cactus.traffics}
The term `cactus' has already appeared in the study of Traffic Freeness in connection to Free Probability, see for instance \cite{male2020traffic}, \cite{au2020rigid} and the references therein. In \cite{au2020rigid}, a cactus graph has all its edges on a simple cycle, thus it is what we call rigid cactus. What we call cactus, in that paper appears under the name of quasi-cactus. Although Au and Male consider graphs with extra structure necessary for handling traffics, some of the underlying combinatorial techniques look similar. The similarities and differences between these two alike objects of study can be explained if we look at how the graphs are constructed from a non-crossing partition. In \cite{au2020rigid}, this is done via a quotient graph called $C^\pi$, which in our approach can be described as the graph $\GG_\pi$ where $n$ extra edges are drawn, each joining the blocks containing elements $2i$ and $2i+1$ for $i=1,\dots,n$. Adding these extra edges forces the cactus graph to be rigid. 
\end{rem}

The study of the anti-commutator via graphs, in the special case where the variables are even is governed by rigid cacti, as pointed out by the following result.

\begin{cor}
Consider two free even random variables $a$ and $b$. Then, for every $n\geq 1$
\begin{equation}
\label{eq.anticommutator.graphs.even}
\kappa_n(ab+ba) = \sum_{\substack{(\GG,C)\in OCG_{n}\\ \GG\text{ is bipartite}\\ \GG\text{ is rigid}}} \Bigg( \prod_{v\in V'_\GG } \kappa_{d(v)}(a)  \prod_{w\in V''_\GG} \kappa_{d(w)}(b) +\prod_{v\in V'_\GG }\kappa_{d(v)}(b)  \prod_{w\in V''_\GG} \kappa_{d(w)}(a) \Bigg).
\end{equation}
\end{cor}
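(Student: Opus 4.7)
The strategy is to derive this as a direct specialization of Theorem \ref{thm.anticommutator.graphs}, using that when $a$ and $b$ are both even all their odd cumulants vanish. Concretely, I would first observe that in the summand of Equation \eqref{eq.anticommutator.graphs}, each of the two products contains a factor $\kappa_{d(v)}(a)$ or $\kappa_{d(v)}(b)$ for every vertex $v\in V_{\GG}$. Since $\kappa_m(a) = \kappa_m(b) = 0$ for odd $m$, as soon as some vertex of $\GG$ has odd degree both products vanish. Therefore, the sum on the right-hand side of \eqref{eq.anticommutator.graphs} reduces to those oriented bipartite cacti $(\GG,C)$ whose vertices all have even degree.

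The crux of the argument is then to show that, within the class of cacti, the condition \emph{every vertex has even degree} is equivalent to $\GG$ being rigid. One direction is immediate: in a rigid cactus every edge belongs to a simple cycle, and each simple cycle contributes $2$ to the degree of each of its vertices (treating a loop as a simple cycle of length $1$ contributing $2$), so every degree is automatically even. For the converse I would use the handshaking lemma: in a cactus, a flexible edge is exactly a bridge, because an edge lying on any cycle must lie on a simple cycle. Removing such a bridge disconnects $\GG$ into two components, and in each component only the endpoint of the removed edge has its degree changed (reduced by one, hence becoming odd). This produces a graph with exactly one vertex of odd degree in each component, contradicting the handshaking lemma. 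Hence no flexible edges can exist, i.e.\ $\GG$ is rigid.

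Finally, for a rigid oriented cactus $(\GG,C)$ there are no flexible edges whatsoever, so $f_C = 0$ and $2^{f_C} = 1$. Plugging this simplification into Theorem \ref{thm.anticommutator.graphs}, now restricted to bipartite rigid cacti, delivers Equation \eqref{eq.anticommutator.graphs.even}.

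I expect the main (minor) obstacle to be the rigidity equivalence. Everything else is mechanical substitution, so the proof should be short; the only genuine combinatorial content is the bridge/handshaking argument, which uses nothing beyond concepts already set up in the paper.
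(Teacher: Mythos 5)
Your proposal is correct and follows essentially the same route as the paper: specialize Theorem \ref{thm.anticommutator.graphs} using the vanishing of odd cumulants of even elements, characterize rigid cacti as exactly those cacti in which every vertex has even degree via the same bridge-removal/handshaking parity argument on the two resulting components, and note $f_C=0$ for rigid cacti. The only difference is cosmetic: you spell out the easy direction (rigid implies all degrees even, including the loop case), which the paper dismisses as trivial.
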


\begin{proof}
We observe that cacti graphs where all its vertices have even degree must be rigid. Indeed, for the sake of contradiction assume that $\GG$ is a cactus graph where all its vertices have even degree and it has a flexible edge $e$. If we consider the graph $\GG'$ obtained by deleting $e$ from $\GG$, then $\GG'$ has two connected components, let $A$ be one of these components and let $v$ be the unique vertex of $A$ that was an endpoint of $e$. Then the degree of $v$ in $\GG'$ is odd (as $dg(v)$ in $\GG$ is even and we removed $e$), since the sum of the degrees of the vertices in $A$ is even, there must be another vertex $u$ that has odd degree, but then $d(u)$ in $\GG$ is odd, a contradiction. Notice that the converse is trivial, thus, rigid cacti are characterized as cacti where all its vertices have even degree. If we restrict the sum in \eqref{eq.anticommutator.graphs} to graphs with only even degrees, and we use previous observation we directly get \eqref{eq.anticommutator.graphs.even}.
\end{proof}

\begin{rem}
\label{rem.bijection.rigid.cactus}
Recall that in Theorem \ref{thm.anticommutator.even.variables} we already had a formula for the anti-commutator of even variables. The reason why the right-hand side of formula \eqref{eq.anticommutator.even} is the same as the right-hand side of \eqref{eq.anticommutator.graphs.even}, follows from the fact that oriented rigid cacti graphs with $2n$ edges are in bijection with $\NN\CC(n)$. This bijection should be clear from the development in \cite{au2020rigid} and previous work on Traffic Freeness related to rigid cacti graphs. Below we just give an idea on a precise bijection that uses oriented tree graphs.

Given an oriented rigid cactus graph $\GG$ consider the tree $T_\GG$ that has vertices given by the vertices and faces (simple cycles) of $\GG$, we draw an edge between a vertex $v$ and a face $f$ of $\GG$ if $v$ is contained in $f$. It is ease to see that $T_\GG$ is a tree with $n$ edges, and where the degree of $v$ in $T_\GG$ is now half the degree of $v$ in $\GG$. The outercycle $C=(v_1,e_1,v_2,\dots)$ of $\GG$ naturally gives an outercycle $C'$ of $T_\GG$ that starts in $v_1$ and whenever we had the string $(v_i,e_i,v_{i+1})$ in $C$, we replace it by the string $(v_i,e'_i,f_i,e'_{i+1},v_{i+1})$ of $C'$ that instead of going directly from $v_i$ to $v_{i+1}$ first goes to the face $f_i$ that contains $e_i$. On the other hand, non-crossing partitions are in bijection with oriented tree graphs as follows. Let $\pi\in \NN\CC(n)$ and consider the partition $\hat{\pi}=\lan \pi,Kr(\pi)\ran\in \NN\CC(2n)$, then $(\GG_{\hat{\pi}},C_{\hat{\pi}})$ is an oriented tree with $n$ edges, (it must be a tree as it has $n+1$ vertices). Moreover, in the bipartite decomposition $\hat{\pi}=\hat{\pi}' \sqcup \hat{\pi}''$ the degrees of the vertices of $\hat{\pi}'$ are exactly the sizes of the blocks of $\pi$. Putting together these bijections, gives a combinatorial explanation on why the right-hand sides of \eqref{eq.anticommutator.even} and \eqref{eq.anticommutator.graphs.even} are the same.
\end{rem}

\addtocontents{toc}{\SkipTocEntry}
\subsection{Quadratic forms}

$\ $

\noindent 
The purpose of this subsection is to prove Theorem \ref{thm.quadratic.forms}, which is a general version of the main result of the preceding subsection. This theorem enables us to compute quadratic forms on free random variables $a_1,\dots, a_k$:
\[
a:= \sum_{1\leq i,j\leq k} w_{i,j} a_ia_j.
\]

The approach is to adapt the ideas from Section \ref{sec:anticomm}, to this more general setting. In most of the cases the generalization is straightforward, so we can directly provide a proof of this result. In the proof we restrict our attention to introducing the new necessary notation and pointing out how we can modify our previous results to fit the new setting.

\begin{proof}[Proof of Theorem \ref{thm.quadratic.forms}]
Consider a $2n$-tuple $\varepsilon=(\varepsilon(1),\dots,\varepsilon(2n))\in [k]^{2n}$ with entries $\varepsilon(i)$ in $[k]:=\{1,\dots,k\}$. And for $i\in[k]$, let $A_i(\varepsilon):= \{ t\in [2n] : \varepsilon(t)=i\}$ denote the entries of $\varepsilon$ that are equal to $i$. Then, these sets together, $A(\varepsilon)=\{A_1(\varepsilon),\dots,A_k(\varepsilon)\}$, form an ordered partition of $[2n]$. For every tuple $\varepsilon\in  [k]^{2n}$, we denote its weight as 
\[
w(\varepsilon):=\prod_{r=1}^n w_{\varepsilon(2r-1),\varepsilon(2r)}.
\]

With this notation, formula \eqref{eq.rephrase} from Proposition \ref{prop.basic.anticomm} is easily generalized to 
\begin{equation}
\label{eq.inproof.1}
\kappa_n(a) =\sum_{\substack{\pi\in \NN\CC(2n)\\ \pi\lor I_{2n}=1_{2n} }} \sum_{\substack{\varepsilon\in [k]^{2n} \\ A(\varepsilon) \geq \pi  }} 
 w(\varepsilon) \prod_{i=1}^k \Bigg( \prod_{\substack{V\in \pi, \\ V\subset A_i(\varepsilon)}} \kappa_{|V|}(a) \Bigg).
\end{equation}

Then, if we fix a $\pi\in\NN\CC(2n)$ such that $\pi\lor I_{2n}=1_{2n}$. From Lemma \ref{Lemma.connected} we know that $\GG_\pi$ is connected, and proceeding as in Proposition \ref{prop.few.nonvanishing.epsilons} we further obtain that for every colored graph $\GG_\pi\in OCG^{(k)}_n$, we can construct a unique tuple $\varepsilon\in [k]^{2n}$ such that $A(\epsilon)=(Q_1,\dots, Q_k)$ (as ordered partitions). If replace this in \eqref{eq.inproof.1} we obtain a general version of our main formula
\[
\kappa_n(a) =\sum_{\substack{\pi\in \NN\CC(2n),\\ \GG_\pi\mbox{ is connected} }}  w(\varepsilon)\prod_{i=1}^k \Bigg( \prod_{\substack{V\in \pi, \\ V\subset A_i(\varepsilon)}} \kappa_{|V|}(a) \Bigg).
\]
Notice that we no longer require the graph to be bipartite, as this was a restriction coming from the fact that anti-commutator is the case $k=2$ (bicolored) with the extra requirement that $w_{1,1}=w_{2,2}=0$, which forces the graph to be bipartite. Finally, we can break the sum depending on the oriented cactus graph formed by each $\pi$. Proceeding in the same way we did for the proof of Theorem \ref{thm.anticommutator.graphs}, we conclude the desired formula \eqref{eq.quadratic.forms}.
\end{proof}

As a direct corollary of Theorem \ref{thm.quadratic.forms} we can restrict to the case where the variables $a_1,\dots, a_k$ are all even. A formula for the cumulants of a quadratic form in this case was already provided by Ejsmont and Lehner in Proposition 4.5 of \cite{ejsmont2017sample} (see also \cite{ejsmont2020sums}). The combinatorial link between their formula and the one presented below is again explained by the bijection between rigid cacti and non-crossing partitions mentioned in Remark \ref{rem.bijection.rigid.cactus}.

\begin{cor}
\label{cor.quadratic.forms.even}
Consider $k$ free even random variables $a_1,\dots, a_k$ and let $a$ be quadratic form on these variables
\[
a:= \sum_{1\leq i,j\leq k} w_{ij} a_ia_j,
\]
where $w_{ij}\in \rr$ and $w_{ji}=w_{ij}$ for $1\leq i\leq j\leq k$. Then, the cumulants of $a$ are given by 
\begin{equation}
\label{eq.quadratic.forms.even}
\kappa_n(a) = \sum_{\substack{(\GG,C) \in OCG^{(k)}_{n}\\ \GG \text{ is rigid} }} w_{\GG} \prod_{i=1}^k \Big( \prod_{v_i\in Q_i } \kappa_{|v_i|}(a_i) \Big).
\end{equation}
\end{cor}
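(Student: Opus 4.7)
The plan is to deduce Corollary \ref{cor.quadratic.forms.even} as a direct specialization of Theorem \ref{thm.quadratic.forms}, using the vanishing of odd cumulants for even variables. Since each $a_i$ is even, $\kappa_m(a_i)=0$ for every odd $m$, so any term of \eqref{eq.quadratic.forms} in which some vertex $v$ of $\GG$ has odd degree $d(v)$ contributes zero. Hence the summation collapses onto those oriented colored cacti $(\GG,C)\in OCG^{(k)}_n$ such that every vertex of $\GG$ has even degree.

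The central combinatorial step is to identify these ``all even degree'' cacti with the rigid ones. This is precisely the equivalence already established in the proof leading to \eqref{eq.anticommutator.graphs.even}: on one hand, every rigid cactus has vertices of even degree, because at each vertex the incident edges are partitioned into the simple cycles they belong to, and each simple cycle contributes an even number (namely $2$) of edge-ends at that vertex; on the other hand, if a cactus $\GG$ has a flexible edge $e$, then removing $e$ disconnects $\GG$ into two components, and the endpoint $v$ of $e$ in either component $A$ has its degree drop by one, becoming odd in $A$. By the handshake lemma applied to $A$, a further vertex $u\in A$ of odd degree in $A$ must exist, and $u$ would then already have odd degree in $\GG$ (since $u\neq v$ and only $e$ was removed), contradicting the even-degree hypothesis. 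I plan to simply invoke this argument, essentially verbatim from the even anti-commutator case, rather than rewrite it in detail.

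Once the index set is restricted to rigid cacti, the final simplification is immediate. For a rigid cactus, every edge belongs to some simple cycle, so by definition there are no flexible edges at all and $f_C=0$. Consequently the factor $2^{f_C}$ appearing in \eqref{eq.quadratic.forms} becomes $1$, and the restricted sum reduces precisely to the right-hand side of \eqref{eq.quadratic.forms.even}. I do not anticipate any significant obstacle in this proof; the only substantive input is the degree-parity characterization of rigid cacti, which is borrowed from the argument for the even anti-commutator, while the rest is bookkeeping.
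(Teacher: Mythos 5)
Your proposal is correct and follows essentially the same route as the paper: the paper also presents this as an immediate specialization of Theorem \ref{thm.quadratic.forms}, using the vanishing of odd cumulants of even variables to restrict to cacti with all vertex degrees even, and then invoking the characterization (proved for the even anti-commutator case) that such cacti are exactly the rigid ones, so that $f_C=0$ and the factor $2^{f_C}$ disappears. No gaps.
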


To finish this section, we study the quadratic form given by a sum of anti-commutators. As some weights are equal to 0, this implies that the terms for some graphs vanish and the sum to the right-hand side of \eqref{eq.quadratic.forms} simplifies, and has a nice interpretation as $k$-partite graphs.

\begin{exm}
To study the sum of the anti-commutators of $k$ free random variables $a_1,\dots, a_k$
\[
a:= \sum_{1\leq i <j\leq k} a_ia_j+a_ja_i,
\]
this amounts to considering the weights $w_{ii}=0$ for $1\leq i\leq k$ and $w_{ij}=1$ for $1\leq i,j\leq k$ with $i\neq j$. Therefore, we can restrict our attention to the subset $ACSG_n^{(k)}\subset OCG^{(k)}_n$ (standing for \emph{anti-commutator sums graphs}) where the coloring $(Q_1,\dots,Q_k)$ is $k$-partition of $\GG$. Then, the cumulants of $a$ are given by 
\begin{equation}
\label{eq.sums.anticommutator}
\kappa_n(a) = \sum_{(\GG,C)\in ACSG^{(k)}_{n}} 2^{f_C}  \prod_{i=1}^k \Big( \prod_{v_i\in Q_i } \kappa_{|v_i|}(a_i) \Big).
\end{equation}

Notice that if just consider semicircular variables in this formula, then the terms in the sum will vanish for all graphs except when $\GG$ is the cycle graph on $n$ edges. This graph has only one possible orientation $C$, this cycle is the graph $\GG$ itself. Moreover $\GG$ has only rigid edges, so $f_C=0$. Thus $\kappa_n(a)$ is just the number of $k$-partitions of the cycle graph on $n$ edges. This retrieves the formulas presented in Section 6.1 of \cite{ejsmont2020free} (see also Remark 2.6 of \cite{ejsmont2020sums}).
\end{exm}

\bibliographystyle{alpha}
\bibliography{references.bib}

\end{document}